\documentclass[11pt,letterpaper]{amsart}
\usepackage[utf8]{inputenc}
\usepackage[russian,english]{babel}
\usepackage{xcolor}
\usepackage{graphicx}
\usepackage{geometry}
\usepackage{amsmath,amsfonts,amssymb, amsthm, url}

\newcommand{\dd}{\mathcal{D}}
\newcommand{\pp}{\mathcal{P}}

\newcommand{\ff}{\mathcal{F}}
\newcommand{\s}{\mathcal{S}}
\newcommand{\g}{\mathcal{G}}
\newcommand{\aaa}{\mathcal{A}}
\newcommand{\bb}{\mathcal{B}}

\newcommand{\cc}{\mathcal{C}}
\newcommand{\uu}{\mathcal{U}}
\newcommand{\m}{\mathcal}

\newcommand{\T}{\mathcal{T}}
\newcommand{\W}{\mathcal{W}}

\newtheorem{thm}{Theorem}

\newtheorem{lem}[thm]{Lemma}

\newtheorem{conj}{Conjecture}
\newtheorem{obs}[thm]{Observation}
\newcommand\stir[2]{\genfrac{\{}{\}}{0pt}{0}{#1}{#2}}
\date{}
\title{Erd\H os--Ko--Rado type results for partitions via spread approximations}
\author{Andrey Kupavskii}
\address{Moscow Institute of Physics and Technology, St. Petersburg State University, Innopolis University, Russia; Email: {\tt kupavskii@ya.ru}}

\begin{document}

\maketitle
\begin{abstract}
  In this paper, we address several Erd\H os--Ko--Rado type questions for families of partitions. Two partitions of $[n]$ are {\it $t$-intersecting} if they share at least $t$ parts, and are {\it partially $t$-intersecting} if some of their parts intersect in at least $t$ elements. The question of what is the largest family of pairwise $t$-intersecting partitions was studied for several classes of partitions:   Peter Erd\H os and Sz\'ekely studied partitions of $[n]$ into $\ell$ parts of unrestricted size; Ku and Renshaw studied unrestricted partitions of $[n]$; Meagher and Moura, and then Godsil and Meagher studied partitions into $\ell$ parts of equal size. We improve and generalize the results proved by these authors.

  Meagher and Moura, following the work of Erd\H os and Sz\'ekely, introduced the notion of partially $t$-intersecting partitions, and conjectured, what should be the largest partially $t$-intersecting family of partitions into $\ell$ parts of equal size $k$. The main result of this paper is the proof of their conjecture for all $t, k$, provided $\ell$ is sufficiently large.

  All our results are applications of the spread approximation technique, introduced by Zakharov and the author. In order to use it, we need to refine some of the theorems from the original paper. As a byproduct, this makes the present paper a self-contained presentation of the spread approximation technique for $t$-intersecting problems.
\end{abstract}

\section{Introduction}
For a positive integer $n$, we use notation $[n] = \{1,\ldots, n\}$ and, more generally, $[a,b] = \{a,a+1,\ldots, b\}$. For a set $X$, the notation $2^X$ and ${X\choose k}$ stand for the collection of all subsets and all $k$-element subsets of the set $X$, respectively. A {\it family} is a collection of subsets of $X$ for some $X$.

This paper  deals with intersection theorems, which constitute a large and important class of results in extremal combinatorics. A family of sets is {\it intersecting} if any two sets in the family have non-empty intersection, and is {\it $t$-intersecting} if any two sets intersect in at least $t$ elements. In their seminal paper, Erd\H os, Ko and Rado \cite{EKR} determined the largest size of an intersecting family in $2^{[n]}$ and ${[n]\choose k}$ for all $n,k$. Later, after a series of  papers by Frankl \cite{F1}, Wilson \cite{Wil}, and Frankl and F\"uredi \cite{FF3}, Ahlswede and Khachatrian \cite{AK} confirmed Frankl's conjecture on the size of the largest $t$-intersecting family in ${[n]\choose k}$ for any $n,k,t$.

Intersecting questions were investigated for many other structures. See a great survey paper by Ellis \cite{Ell} on the subject. The first result in this direction was due to Deza and Frankl \cite{DF}, who determined the largest size of an intersecting family of permutations. We call two permutations $\sigma,\pi$ {\it intersecting} if for some element $x$ we have $\sigma(x) = \pi(x)$. Deza and Frankl also found the largest family of $2$- and $3$-intersecting permutations when $n$ and $n-1$ is a prime number, respectively. They also made a conjecture concerning the size of the largest $t$-intersecting family of permutations. The progress on this problem was rather difficult. After a series of papers, Ellis, Friedgut and Pilpel \cite{EFP} managed to solve it for any constant $t$, provided $n>n_0(t)$. Recently, the problem was resolved for any  $n$ and $t$ that satisfy $n>C t\log^2t$ by Zakharov and the author \cite{KuZa}. This was later improved to $n>Ct$ by Keller, Lifshitz, Minzer and Sheinfeld \cite{KLMS} and then to $n>(1+\epsilon)t$ for any $\epsilon>0$ by Kupavskii \cite{Kupper}. Early approaches to this question were algebraic, based on Hoffman-Delsarte type bounds and representation theory. The approach of \cite{EFP} combines junta approximations, coming from Boolean Analysis, with representation theory. Zakharov and the author introduced a combinatorial technique of spread approximations that is based on the breakthrough in the Erd\H os--Rado sunflower problem due to Alweiss, Lovett, Wu and Zhang \cite{Alw}. The approach of \cite{KLMS} is based on hypercontractivity, an important tool from Analysis.

Another class of EKR-type problems with algebraic flavour deals with different classes of partitions. Until now, it was approached either using the Delta-system method or algebraic tools, based on the Delsarte--Hoffman bounds for suitable graphs. In this paper, we show that the technique of spread approximations (an essentially combinatorial approach) allows to make significant progress in these questions.

Consider two partitions $P = (P_1,\ldots, P_{\ell_1})$ and $Q = (Q_1,\ldots, Q_{\ell_2})$ of $[n]$. We say that $P$ and $Q$ {\it $t$-intersect} if they share at least $t$ parts, that is, $P_{i_s} = Q_{j_s}$ for two sets of $t$ indices $\{i_1,\ldots, i_t\}$ and $\{j_1,\ldots, j_t\}$. We say that $P$ and $Q$ {\it partially $t$-intersect} if there are parts $P_i, Q_j$, such that $|P_i\cap Q_j|\ge t$. A family of partitions is {\it (partially) $t$-intersecting} if any two partitions from the family {\it (partially) $t$-intersect}. Note that two partitions may be $t$-intersecting, but not partially $t$-intersecting if each of their common parts has size strictly smaller than $t$.

\subsection{General partitions}
Let $\bb_n$ be the family of all partitions of $[n]$. Recall that $|\bb_n|$ is the $n$-th Bell number $B_n$ (for general reference, see \cite{bellwiki}). A natural example of $t$-intersecting family in $\bb_n$ is the family of all partitions that have $\{1\},\{2\},\ldots, \{t\}$ as parts. The size of this family is $B_{n-t}$.   Ku and Renshaw \cite{KuRe} proved the following result.
\begin{thm}[Ku and Renshaw \cite{KuRe}]\label{kure} Let $n\ge n_0(t)$ and assume that $\ff\subset \bb_n$ is $t$-intersecting. Then $|\ff|\le B_{n-t}$, with equality only possible if $\ff$ is a family of all partitions with $t$ fixed singletons.
\end{thm}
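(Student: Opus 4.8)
The plan is to deduce this as an application of the spread approximation technique, exactly as advertised in the abstract. First I would set up the general machinery: view a partition $P$ of $[n]$ as a collection of its parts, so that $\bb_n$ embeds into $2^{2^{[n]}}$, and a $t$-intersecting family of partitions becomes, after passing to the ground set $\{S\colon S\subseteq[n]\}$ of all possible parts, a $t$-intersecting family of sets (in the sense that any two of the encoded sets share at least $t$ common ``part'' elements). The key quantitative input is that $\bb_n$ is \emph{small}: $B_n = 2^{o(n\log n)}$, and more importantly the number of partitions using a fixed set $S$ as a part is $B_{n-|S|}$, which decays rapidly as $|S|$ grows. This is what makes a spread approximation effective: a family that is not already essentially ``captured'' by a small set of parts must have large spread, and then the spread approximation theorem (the refined version of the Zakharov–Kupavskii result that the paper promises to prove earlier) forces it to be small — smaller than $B_{n-t}$ once $n$ is large.

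The main steps, in order, would be as follows. (i) Run the spread approximation: obtain a family $\s$ of ``small'' sets (each a bounded number of parts) such that every partition in $\ff$ contains some member of $\s$, and the residual family left uncovered by $\s$ has large spread, hence negligible size by the spread/sunflower bound together with the smallness of $\bb_n$. (ii) Analyze $\s$ using the $t$-intersecting condition: any two members of $\s$, being ``kernels'' of $t$-intersecting partitions, must themselves $t$-intersect in the appropriate sense; combined with the bounded size of members of $\s$, a stability/Hilton–Milner-type argument (or a direct combinatorial argument exploiting that parts are \emph{disjoint} subsets of $[n]$) pins down $\s$ to be either a single fixed set of $t$ parts, or something that covers only a small portion of $\bb_n$. (iii) Count: if $\s$ is a single fixed set $\{A_1,\dots,A_t\}$ of $t$ disjoint parts, the partitions of $[n]$ containing all of them number $B_{n-|A_1|-\dots-|A_t|}\le B_{n-t}$, with equality iff each $A_i$ is a singleton — giving both the bound and the uniqueness. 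All other structural outcomes for $\s$ are shown to yield strictly fewer than $B_{n-t}$ partitions for $n$ large, using the superadditivity-type growth of Bell numbers.

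The hard part, I expect, will be step (ii): extracting the exact extremal configuration (t fixed singletons) rather than just an approximate one. The spread approximation naturally gives an \emph{approximate} structure — a bounded ``junta'' of parts controlling the family — and upgrading this to the exact statement requires a careful bootstrapping argument showing that no part in the controlling set can have size larger than $1$, and that there cannot be more than $t$ relevant parts, without losing partitions. A secondary technical obstacle is making the refinement of the spread approximation theorem quantitatively strong enough that the error term is genuinely $o(B_{n-t})$ and not merely $o(B_n)$; since $B_{n-t}/B_n$ is itself only $n^{-\Theta(t)}$-ish, the residual bound from the sunflower lemma has to be pushed to beat this, which is presumably why the paper needs to ``refine some of the theorems from the original paper.'' Everything else — the encoding, the final Bell-number count, the uniqueness — should be routine once the structure of $\s$ is in hand.
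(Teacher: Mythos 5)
Your outline matches the paper's overall strategy for establishing this theorem (which the paper obtains as a corollary of the stronger Theorem~\ref{thm6}): encode a partition as the set of its parts inside the ground set $2^{[n]}$, verify that $\bb_n$ is sufficiently spread, run the spread approximation machinery (Theorems~\ref{thmkz1} and \ref{thmkz2}) to produce a small $t$-intersecting family $\s$ of ``partial partitions'' plus a tiny remainder $\ff'$, and then pin $\s$ down to a single $t$-element set $S$. You correctly anticipated the two technical refinements the paper needs: a quantitative lower bound on the spreadness of $\bb_n$ (the paper proves $B_{n+1}/B_n\ge n/(2\log_e n)$ in Lemma~\ref{lemb1}, which is exactly what drives the $r_0$-spreadness), and tightening the remainder bound so it is small relative to $B_{n-t}$, not merely to $B_n$.

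However, there is a genuine gap in how you close the argument. Once $\s$ is reduced to a single $t$-element set $S$ corresponding to $t$ singletons, you have $|\ff|\le|\bb_n[S]|+|\ff'|=B_{n-t}+|\ff'|$, and making $|\ff'|$ small (even $o(B_{n-t})$) does \emph{not} yield the stated bound $|\ff|\le B_{n-t}$, let alone the uniqueness of the extremal family. Your sketch treats step (iii) as ``routine once the structure of $\s$ is in hand,'' but it is precisely here that the paper needs a new and nontrivial counting lemma. The mechanism is an exchange argument: for any $P\in\ff'$ not containing all of the singletons in $S$, one must show that keeping $P$ in $\ff$ forces the exclusion from $\ff$ of \emph{many} partitions in $\bb_n[S]$ — namely all partitions of $[n]\setminus S$ that are ``$P$-derangements,'' i.e.\ share no part with $P$ — and this exclusion count strictly exceeds $|\ff'|$. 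The paper proves (Lemma~\ref{lemb2}) that the number of $P$-derangements is at least $c'e^{-\frac{3}{2}\log_e^2(n-t)}B_{n-t}$ for any $P$, via a compression argument reducing to the all-singleton partition and then a recursive lower bound on $\tilde B_n$, the count of partitions with all parts of size $\ge 2$. Without an explicit lower bound of this type, you cannot conclude that $\ff'=\emptyset$ for an extremal family, and the ``superadditivity-type growth of Bell numbers'' you invoke is not the right tool: it concerns $B_{n-t}$ vs.\ $B_{n-t'}$ for $t'>t$, whereas what is needed is a bound on how many partitions can simultaneously avoid all parts of a \emph{fixed} partition $P$. This derangement-type lemma is the genuinely new ingredient your proposal is missing.
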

In their proof, they require $t\le c\log n$.  The theorem below gives a much better dependence between the parameters.

\begin{thm}\label{thm6} The conclusion of Theorem~\ref{kure} is valid for $n\ge Ct\log^2t$ with some absolute constant $C$.
\end{thm}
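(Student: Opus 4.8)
The plan is to recast $\ff$ as a $t$-intersecting set system and run the spread approximation method, in the refined form developed in the earlier sections, on it. Identify a partition $P=(P_1,\dots,P_\ell)$ of $[n]$ with the set $\{P_1,\dots,P_\ell\}$ of its parts; since the parts of a partition are pairwise disjoint, $\ff$ is then a $t$-intersecting family of subsets of the ground set $\Omega=2^{[n]}\setminus\{\varnothing\}$, all of whose ``cores'' (sets of parts shared by several members) are \emph{partial partitions} of $[n]$. In particular every member has at least $t$ parts. Two elementary facts drive the endgame: the number of partitions of $[n]$ containing a fixed partial partition $\{A_1,\dots,A_j\}$ with $|A_1\cup\dots\cup A_j|=b$ is exactly $B_{n-b}$; and $B_{m+1}/B_m=\Theta(m/\log m)$, so among all choices of $t$ pairwise disjoint nonempty blocks this count is maximized exactly by the choices of $t$ singletons, with value $B_{n-t}$.

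Before invoking the machinery I would delete from $\ff$ every partition having a block of size at least a threshold $m=m(n,t)$. Using the identity $\sum_{a}\binom{n}{a}B_{n-a}=B_{n+1}$ together with the growth rate of the Bell numbers, the number of deleted partitions is $o(B_{n-t})$ provided $m$ is chosen so that $m\log(m/\log n)\gg t\log n$, and one checks that such an $m\le n$ exists throughout the range $n\ge Ct\log^2t$ (for instance $m\asymp n/\log n$ works at the hard end, where $t\approx n/\log^2 n$), while a near-extremal partition has all blocks of size $O(\log n/\log\log n)$, so the truncation really costs only $o(B_{n-t})$. Applying the refined spread approximation theorem to the truncated family yields a $t$-intersecting family $\s$ of cores, each a partial partition with between $t$ and $s_0(t)$ blocks, such that every surviving member of $\ff$ contains some core of $\s$, and such that the contribution of the members not pinned down by a single core via the accompanying $r$-spread estimate is $o(B_{n-t})$. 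It is this $r$-spread estimate that imposes $n\ge Ct\log^2t$, and its crucial feature — the point of the refinement — is that it loses \emph{no polynomial factor}, so that the residual contribution is genuinely negligible next to $B_{n-t}$ (which is only a $\Theta(n/\log n)$ factor below $B_{n-t+1}$), not merely next to $B_n$.

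It remains to analyse $t$-intersecting families $\s$ of partial partitions with at most $s_0(t)$ blocks. Since $\Omega$ is astronomically larger than $s_0(t)$, the Erd\H os--Ko--Rado/Ahlswede--Khachatrian picture applies. If some core has exactly $t$ blocks then, $\s$ being $t$-intersecting, every core contains it, so $\s$ is a single star at a $t$-block partial partition $T$; hence $\ff\subseteq\{P:T\subseteq P\}$ and $|\ff|\le B_{n-|\cup T|}\le B_{n-t}$, with equality (by the two facts above) precisely when $T$ consists of $t$ singletons. Otherwise every core has union of size at least $t+1$, so every star has size at most $B_{n-t-1}$, and the no-polynomial-loss spread bound forces the total to stay below $B_{n-t}$; the borderline cases, which are the Ahlswede--Khachatrian families fixing $t+2i$ disjoint blocks and taking the partitions sharing at least $t+i$ of them, have size at most $\sum_{j\ge t+i}\binom{t+2i}{j}B_{n-j}\le 4^{i}B_{n-t-i}$, and since $B_{n-t}/B_{n-t-i}\ge (cn/\log n)^{i}$ swamps $4^{i}$ already for $n$ a large constant multiple of $t$, these cannot match $B_{n-t}$. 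Either way $|\ff|\le B_{n-t}+o(B_{n-t})$. To upgrade this to the sharp bound and the equality characterization, one bootstraps in the usual way: knowing that outside an $o(B_{n-t})$ set $\ff$ coincides with the star of a $t$-set of singletons $T$, any partition $F\not\supseteq T$ is seen to miss a positive proportion of that star — a uniformly random partition of $[n]\setminus T$ has $\Theta(\log n)$ singleton blocks, so the star members that $F$ is forced to meet but could avoid form a positive fraction — contradicting $F\in\ff$ unless the exceptional set is empty.

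The main obstacle is the second paragraph: making spread approximation run in this genuinely non-uniform setting — the number of parts and the sizes of the parts of a partition are both unbounded — while keeping every error term below $B_{n-t}$ rather than merely below $B_n$. This is exactly what the refinements of the original spread-approximation theorems supply: the residual spread part of the family must be bounded against the small quantity $B_{n-t}$ with no polynomial slack, and the truncation threshold and the spread parameter must be chosen compatibly across the whole regime $n\ge Ct\log^2t$. The bootstrap to exact equality, by contrast, is routine.
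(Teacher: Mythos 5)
Your overall architecture is the same as the paper's: interpret $\bb_n$ as a family of $\le n$-element subsets of the ground set $2^{[n]}$, apply the refined spread-approximation theorem (Theorem~\ref{thmkz1}) to get a low-uniformity $t$-intersecting core family $\s$ plus a small remainder $\ff'$, show $\s$ must be a single $t$-set (of singletons), and then kill $\ff'$ by a counting argument. There are, however, two concrete problems with the proposal.

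First, the initial truncation step (delete partitions with a block of size $\ge m$) is not needed and does not appear in the paper. Theorem~\ref{thmkz1} is stated for $\ff\subset \aaa\cap \binom{[n]}{\le k}$, so non-uniformity is built in; the paper simply takes $k = n$ and chooses $q \asymp n/\log n$. Your truncation introduces an extra error term and an extra parameter $m$ to tune across the whole range $n\ge Ct\log^2t$, and the justification that the truncated part is $o(B_{n-t})$ is itself nontrivial, so this is an added complication rather than a simplification. Relatedly, your analysis of $\s$ via Ahlswede--Khachatrian-type configurations is more roundabout than the paper's direct application of Theorem~\ref{thmkz2}, which already yields that either $\s = \{T\}$ for a single $t$-element $T$ or $|\aaa[\s]|\le \frac12|\aaa[T]|$.

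Second, and more seriously, the bootstrap step rests on a quantitatively false claim. You assert that any $F\not\supseteq T$ ``misses a positive proportion of the star,'' i.e.\ that a positive fraction of the partitions in $\bb_n[T]$ fail to $t$-intersect $F$. That is not true: the relevant quantity is the number of partitions of $[n]\setminus T$ sharing no part with a fixed partition $P$, which is minimized at the all-singleton $P$ and is only $\tilde B_{n-t} \ge c'e^{-\frac32\log_e^2(n-t)}B_{n-t}$ (Lemma~\ref{lemb2} of the paper) — roughly an $n^{-\Theta(\log n)}$ fraction, not $\Omega(1)$. (Heuristically: a random partition has $\Theta(\log n)$ singletons, so the probability of having \emph{no} singletons is about $e^{-\Theta(\log n)}$, already far from constant.) The argument still closes because the remainder satisfies $|\ff'| \le n^{-4\log_e n}B_{n-t}$, which is smaller still; but establishing that Lemma~\ref{lemb2}-type lower bound — and comparing the two super-polynomially small quantities — is exactly the content of the paper's final step, and cannot be waved away as ``routine.'' As written, your bootstrap is a genuine gap: the claimed positive-fraction avoidance would fail already for $F$ obtained from the extremal example by merging two singletons of $T$.
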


\subsection{Partitions with $k$ parts}
Let $\pp_n^\ell$ be the family of all partitions of $[n]$ into $\ell$ parts. Recall that $|\pp_n^\ell| = \stir{n}{\ell}$, where the expression on the right is the Stirling number of the second kind (for general reference, see \cite{stirwiki}). Peter Erd\H os and L\'aszlo Sz\'ekely \cite{ErSz} proved the following result.
\begin{thm}[Erd\H os and Sz\'ekely \cite{ErSz}]\label{thmersz} Let $n\ge n_0(\ell)$ be large enough. Assume that $\ff\subset \pp_n^\ell$ is $t$-intersecting. Then $|\ff|\le \stir{n-t}{\ell-t}$.
\end{thm}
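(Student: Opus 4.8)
The plan is to translate the problem into a $t$-intersecting problem about $\ell$-element sets over an exponentially large ground set and then run the spread approximation machinery. \emph{Reformulation.} Identify a partition $P=(P_1,\dots,P_\ell)\in\pp_n^\ell$ with the $\ell$-element set $\{P_1,\dots,P_\ell\}\subseteq V:=2^{[n]}\setminus\{\emptyset\}$; this is a bijection, and two partitions $t$-intersect exactly when the corresponding $\ell$-subsets of $V$ meet in at least $t$ elements. Thus $\ff$ becomes a $t$-intersecting family in $\binom{V}{\ell}$ all of whose members are genuine partitions, and $|V|=2^n-1$ is astronomically larger than $\ell$. The one structural fact we need is a ``shadow count'': if $S=\{A_1,\dots,A_s\}$ is a partial partition of $[n]$ (pairwise disjoint nonempty sets), then the number of $P\in\pp_n^\ell$ whose set of parts contains $S$ equals $\stir{n-|A_1\cup\cdots\cup A_s|}{\ell-s}$, which is at most $\stir{n-s}{\ell-s}$, with equality only when every $A_i$ is a singleton. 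A crude Stirling estimate shows that for $n$ large in terms of $\ell$ this quantity drops by a multiplicative factor tending to $0$ (of order $\tfrac{\ell-s}{\ell-s-1}e^{-(n-s)/(\ell-s)}$) whenever $s$ or $|A_1\cup\cdots\cup A_s|$ grows by one, so $\stir{n-t}{\ell-t}$ dwarfs the number of partitions extending any partial partition other than a set of exactly $t$ singletons.

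\emph{Spread approximation.} I would then apply the (refined) spread approximation theorem to the $t$-intersecting family $\ff\subseteq\binom{V}{\ell}$. Since $|V|$ is exponential in $n$ while $\ell$ is fixed, the spreadness parameter can be taken enormous, and the theorem produces a $t$-intersecting family $\s$ of kernels --- each kernel a partial partition of $[n]$ with between $t$ and $\ell$ parts --- together with a subfamily $\ff'\subseteq\ff$ such that every $F\in\ff'$ contains some $S\in\s$, each link $\{F\setminus S:F\in\ff',\,S\subseteq F\}$ is strongly spread, and the discarded part $\ff\setminus\ff'$ is negligible next to $\stir{n-t}{\ell-t}$. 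Running the abstract argument inside $\binom{V}{\ell}$ stays within the class of partitions for free, since the procedure only deletes members or passes to links, and a link of a family of partitions is a family of partial partitions.

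\emph{Finishing, and the main obstacle.} One bounds $|\ff|\le|\ff\setminus\ff'|+\sum_{S\in\s}|\ff_S|\le|\ff\setminus\ff'|+\sum_{S\in\s}\stir{n-|\bigcup S|}{\ell-|S|}$, where $\ff_S=\{F\in\ff:S\subseteq F\}$, and splits $\s$ by kernel size. As $\s$ is $t$-intersecting it contains at most one kernel of size exactly $t$, which contributes at most $\stir{n-t}{\ell-t}$ (attained only if it is a set of $t$ singletons), while every other kernel contributes at most $\stir{n-t-1}{\ell-t-1}$, smaller by a factor $\to 0$. The delicate point --- and, I suspect, the reason the paper must sharpen the original spread approximation statements --- is to make the combined contribution of the non-extremal kernels and the error $|\ff\setminus\ff'|$ provably smaller than the \emph{exact} slack left by the target, so that one concludes $|\ff|\le\stir{n-t}{\ell-t}$ on the nose rather than up to lower-order terms; this needs either a strong enough quantitative/peeling form of the approximation (controlling the number and total weight of kernels via spreadness) or an induction on $\ell$ to dispose of the genuinely small families. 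I expect this bookkeeping in the regime ``$n$ large, $\ell$ fixed'' to be the real hurdle, the reformulation and the Stirling estimates being routine.
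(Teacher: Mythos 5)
Your reformulation and the intuition about spreadness of $\pp_n^\ell$ match the paper's, but there are two substantive points to flag. First, the paper does not prove Theorem~\ref{thmersz} directly; it proves the stronger Theorem~\ref{thmp1}, which gives explicit bounds ($n\ge 2\ell\log_2 n$, $n\ge 48$, $t\le\ell-2$) and a stability clause, and Theorem~\ref{thmersz} follows a fortiori. Second, and more importantly, you are over-engineering: you route the argument through the full two-step machinery (Theorem~\ref{thmkz1} producing a kernel family $\s$ plus a remainder, followed by an analysis of $\s$), and then correctly identify the bookkeeping of ``small kernels $+$ remainder must be absorbed by the exact slack'' as the hurdle. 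The paper sidesteps all of this. Because $\ell$ is tiny compared to $n$, the family $\pp_n^\ell$ (viewed as $\ell$-element subsets of $2^{[n]}$) is already weakly $(n^2/2,\,t)$-spread --- this is Lemma~\ref{lemp1}, the estimate $\stir{n}{\ell}\ge n^2\stir{n-1}{\ell-1}$ --- and $n^2/2$ dwarfs $q=\ell$. So one can apply Theorem~\ref{thmkz2} \emph{directly to $\ff$ itself} playing the role of $\s$, with $\aaa=\pp_n^\ell$ and $q=\ell$: Theorem~\ref{thmkz1} and the remainder analysis are skipped entirely. If $\ff$ has a common $t$-element core, the core maximizing $|\aaa[T]|$ corresponds to $t$ singletons and $|\ff|\le\stir{n-t}{\ell-t}$; otherwise $\ff$ is non-trivial and Theorem~\ref{thmkz2} with $\varepsilon=1/2$ gives $|\ff|\le\tfrac12\stir{n-t}{\ell-t}$ outright. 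The ``peeling form of the approximation'' you suspect is needed is precisely the content of Theorem~\ref{thmkz2} and Lemma~\ref{lemkeyred}, but invoking Theorem~\ref{thmkz1} first costs you the remainder term and the exactness you were worried about; applying Theorem~\ref{thmkz2} to $\ff$ avoids the issue.
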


Here, we improve their result as follows.
\begin{thm}\label{thmp1} Let $n,\ell,t$ be integers such that $t\le \ell-2$, $n\ge 2\ell\log_2 n$ and $n\ge 48$. Assume that $\ff\subset \pp_n^\ell$ is $t$-intersecting. Then $|\ff|\le \stir{n-t}{\ell-t}$. Moreover, if $\ff$ is not contained in a family of all partitions with $t$ fixed singletons, then $|\ff|\le \frac 12\stir{n-t}{\ell-t}$.
\end{thm}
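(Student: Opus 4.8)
The plan is to apply the spread approximation machinery to the family $\ff \subset \pp_n^\ell$. First I would encode each partition $P = (P_1,\dots,P_\ell)$ as a set: the natural choice is the ground set $X = \binom{[n]}{\ge 1}$ (or, more economically, the set of pairs $\{u,v\}$ together with singleton-information, but encoding a partition by its collection of blocks is cleanest), so that a partition becomes an $\ell$-element subset of the ``block alphabet'' $\Omega = 2^{[n]}\setminus\{\emptyset\}$, and two partitions $t$-intersect in the partition sense exactly when the corresponding $\ell$-sets $t$-intersect in the usual sense. Thus $\ff$ becomes a $t$-intersecting family of $\ell$-subsets of $\Omega$, but with the crucial extra structural constraint that each such $\ell$-set is a genuine partition (its blocks are disjoint and cover $[n]$). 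The condition $n \ge 2\ell\log_2 n$ is exactly what makes $\pp_n^\ell$ ``spread'' in the relevant sense: a random partition into $\ell$ parts is very unlikely to contain a prescribed small collection of blocks, because prescribing $j$ disjoint blocks of total size $s$ cuts down the count by roughly a factor $\ell^{s}/(\text{something})$, and the hypothesis forces the parts to be large on average, hence hard to pin down.

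Next I would run the spread approximation iteration (the refined version the paper promises to develop before this point): either $\ff$ is small enough that we are already done, or there is a ``kernel'' set $K \subset \Omega$ of size less than $t$ — i.e. a partial partition with fewer than $t$ blocks — and a large spread subfamily $\ff'$ of partitions all containing $K$, such that $\ff \setminus \ff'$ is negligible. Iterating, and using that any two members of $\ff'$ still $t$-intersect while already sharing the $<t$ blocks of $K$, we are forced into the situation where the kernel has exactly $t$ blocks, i.e. there exist $t$ fixed disjoint sets $B_1,\dots,B_t \subset [n]$ that are common parts of a $(1-o(1))$-fraction of $\ff$. The heart of the argument is then a stability/uniqueness step: among all choices of $t$ fixed blocks, the family of all partitions into $\ell$ parts containing them is largest when the blocks are singletons — because making a block larger removes more elements from the ground set of the ``free'' part while decreasing the number of remaining required parts by the same $1$, and $\stir{n-s}{\ell-t}$ with $s$ the total size of the fixed blocks is maximized (for $s \ge t$, i.e. blocks of size $\ge 1$ each) precisely at $s = t$. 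A short computation with Stirling-number ratios shows $\stir{n-s}{\ell-t} \le \frac12 \stir{n-t}{\ell-t}$ as soon as $s \ge t+1$, which is where the ``moreover'' factor-$\tfrac12$ comes from.

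Finally I would clean up the error terms. After the spread approximation we have $|\ff| \le \stir{n-t}{\ell-t} + (\text{junk})$, and the junk must be shown to be strictly smaller than the gap between $\stir{n-t}{\ell-t}$ and the next-best configuration; this is where the explicit hypotheses $n \ge 48$ and $t \le \ell - 2$ are consumed (the latter guarantees $\ell - t \ge 2$, so the extremal family is not degenerate and the Stirling-number estimates behave, while $n \ge 48$ absorbs the absolute constants hidden in the spread lemma). To nail the equality case I would argue that if $|\ff| = \stir{n-t}{\ell-t}$ then the error family is empty and the kernel blocks all have size $1$, so $\ff$ is exactly the family of partitions with $t$ fixed singletons; and if $\ff$ avoids that structure then either the kernel has $\ge t+1$ total block-size, giving the $\tfrac12$ bound directly, or it has $<t$ blocks, in which case the spread condition plus $t$-intersection is contradictory for $\ell$ in the stated range. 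The main obstacle I anticipate is the second step — verifying that $\pp_n^\ell$ and its subfamilies are spread with the quantitative parameters needed to feed the (refined) spread approximation theorem, and in particular getting the spreadness constant good enough that the iteration terminates after $O(1)$ rounds rather than accumulating an error term that swamps the $\tfrac12\stir{n-t}{\ell-t}$ savings; this is precisely the place where the bound $n \ge 2\ell\log_2 n$ must be used sharply rather than wastefully.
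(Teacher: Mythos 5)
Your set interpretation is the one the paper uses (each block of a partition becomes an atom in $2^{[n]}$, so a partition becomes an $\ell$-element set and $t$-intersection of partitions becomes ordinary $t$-intersection), but you have missed the structural shortcut that drives the whole proof, and without it your route accumulates error terms that make the sharp bound $\stir{n-t}{\ell-t}$ and the factor-$\tfrac12$ stability hard (if not impossible) to recover. You propose to run Theorem~\ref{thmkz1} and obtain a low-uniformity approximation $\s$ plus a remainder $\ff'$; but $\ff\subset\pp_n^\ell$ is \emph{already} $\ell$-uniform, and the hypothesis $n\ge 2\ell\log_2 n$ forces $\ell$ to be small compared to the spread $r$. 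So $\ff$ itself qualifies as the low-uniformity $t$-intersecting family $\s$ in Theorem~\ref{thmkz2}, with $q=\ell$. No approximation is run, there is no $\ff'$, and there is no ``junk'' to absorb — one just applies Theorem~\ref{thmkz2} with $\epsilon=1/2$ and $r=n^2/2$. The paper flags exactly this in its outline: steps 3 and 5 of the general scheme are omitted for Theorems~\ref{thmp1} and~\ref{thmu1} because the families are low-uniformity to start with. After Theorem~\ref{thmkz2}, the reading-off is: if $\ff$ is non-trivial, $|\ff|\le|\aaa[\ff]|\le\tfrac12|\aaa(T)|=\tfrac12\stir{n-t}{\ell-t}$; if $\ff$ is trivial with common $t$-set $X$ that is $t$ singletons, $\ff$ is contained in the extremal family; and if $X$ has a non-singleton block, then $|\aaa(X)|\le\stir{n-t-1}{\ell-t}\le\frac1{\ell-t}\stir{n-t}{\ell-t}\le\tfrac12\stir{n-t}{\ell-t}$ by the Stirling recurrence and $t\le\ell-2$ — which is where the ``moreover'' factor $\tfrac12$ actually comes from in the trivial case (not from a separate maximization step bolted on after an approximation).

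The second genuine gap is the spread estimate itself, which you correctly identify as the main obstacle but do not resolve. The paper proves the key Lemma~\ref{lemp1}, namely $\stir{n}{\ell}\ge n^2\stir{n-1}{\ell-1}$ under $n\ge 1+2\ell\log_2 n$, via a double-count on the bipartite refinement graph between $\ell$- and $(\ell-1)$-partitions of $[n-1]$ (an $\ell$-partition has ${\ell\choose 2}$ coarsenings; an $(\ell-1)$-partition has $\sum(2^{k_i-1}-1)\ge\ell 2^{(n-1)/\ell-1}-\ell$ refinements, which is large by convexity and the hypothesis on $n$). Iterating this gives that $\pp_n^\ell$ is weakly $(n^2/2,t)$-spread with $T$ equal to $t$ singletons, and the arithmetic $n^2/2\ge 48\ell$ (which is what Theorem~\ref{thmkz2} with $\epsilon=1/2$, $q=\ell$ needs) is precisely what consumes $n\ge 2\ell\log_2 n$ and $n\ge 48$. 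Without this lemma your proposal is a plan rather than a proof; with it, and with the ``apply Theorem~\ref{thmkz2} directly'' shortcut, the argument is essentially two paragraphs.
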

Note that the question is trivial for $t=\ell-1$, since if two $k$-partitions have $\ell-1$ common parts then the last part is also common, so they coincide.

\subsection{Partitions with fixed profile}
Let $P = (k_1,\ldots, k_\ell)$ be a non-decreasing sequence of positive integers, called a {\it profile}, and put $n = \sum_{i=1}^\ell k_i$. Consider the family of partitions $\uu_P$  of $n$ into $\ell$ parts, where the $i$-th part has size $k_i$ ({\it partitions with profile $P$}). A {\it canonical $t$-intersecting family} of such partitions is a family $\aaa_{P,t}^X$, defined by a $t$-tuple $X$ of disjoint sets $X_1,\ldots, X_t$, where $|X_i| = k_i$. It consists of all partitions that contain each $X_i$ as one of its parts.

A {\it $(k,\ell)$-partition} is a particular type of profiled partitions, when $k_1 = \ldots = k_\ell =: k$. In other words, it is a partition of $[k\ell]$ into exactly $\ell$ blocks, each of size $k$. Let $\uu_{k,\ell}$ stand for the family of all $(k,\ell)$-partitions, and put $u_{k,\ell}:= |\uu_{k,\ell}|.$

Meagher and Moura \cite{MeMo} proved the following theorems.

\begin{thm}[\cite{MeMo}] Fix positive integers $k,\ell$. Let $\ff\subset\uu_{k,\ell}$ be an intersecting family of $(k,\ell)$-partitions. Then $|\ff|\le u_{k,\ell-1}$, and the equality is only possible for canonical intersecting families.
\end{thm}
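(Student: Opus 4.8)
The plan is to run the spread approximation argument in its simplest case $t=1$. First I would translate the problem into the language of set families: to a $(k,\ell)$-partition $P=(P_1,\dots,P_\ell)$ associate the $\ell$-element set $\widehat P=\{P_1,\dots,P_\ell\}$ of its blocks, viewed as a subset of the ground set $\binom{[k\ell]}{k}$. Then $\widehat\ff=\{\widehat P:P\in\ff\}$ is a family of pairwise intersecting $\ell$-sets, and a canonical intersecting family is exactly the family $\aaa_A$ of all $(k,\ell)$-partitions having a fixed $k$-set $A$ among their blocks; here $|\aaa_A|=u_{k,\ell-1}$, since once the block $A$ is fixed one still has to partition the remaining $k(\ell-1)$ points into $\ell-1$ blocks of size $k$. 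The claim thus becomes an Erd\H os--Ko--Rado statement for the (highly structured) intersecting family $\widehat\ff$: its size is at most that of a star $\aaa_A$, with equality only for such a star.

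I would then apply the spread approximation to $\widehat\ff$. For a suitable choice of the spreadness parameter this produces a dichotomy: either $\widehat\ff$ is spread --- no single $k$-block, and more generally no short list of blocks, is contained in an abnormally large proportion of the family --- in which case the spread lemma bounds $|\ff|$ by a quantity that is $o(u_{k,\ell-1})$ in the parameter ranges where it is meaningful; or the approximation isolates a kernel, a single $k$-set $A$ such that $\ff\cap\aaa_A$ is all of $\ff$ apart from a spread, hence negligibly small, remainder $\ff'':=\ff\setminus\aaa_A$. After this step $\ff$ is confined to the star $\aaa_A$ up to the controlled error $\ff''$.

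It remains to upgrade this approximate picture to the exact bound and the uniqueness statement; one may assume $\ff$ is inclusion-maximal. If $\ff''=\varnothing$ then $\ff\subseteq\aaa_A$, giving $|\ff|\le u_{k,\ell-1}$ with equality only if $\ff=\aaa_A$. If $\ff''\neq\varnothing$, pick $Q\in\ff''$; since $A\notin\widehat Q$ and $\ff$ is intersecting, every partition in $\ff\cap\aaa_A$ must share with $Q$ one of its $\ell$ blocks $Q_1,\dots,Q_\ell$, and a partition containing both $A$ and some $Q_i$ is one of at most $u_{k,\ell-2}$ possibilities; hence $|\ff\cap\aaa_A|\le\ell\,u_{k,\ell-2}$ and $|\ff|\le\ell\,u_{k,\ell-2}+|\ff''|$, which is strictly below $u_{k,\ell-1}$ once $\ell$ is large enough. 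Combining this with the spread case proves the theorem.

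I expect this last step to be the genuine obstacle. The spread approximation only pins down $\ff$ modulo the remainder $\ff''$, so the Hilton--Milner-type count has to be sharp enough to absorb that error, which forces one to track the spreadness threshold quantitatively rather than just qualitatively. Moreover, since the statement carries no largeness hypothesis on $k$ or $\ell$, the regimes in which $\ell\,u_{k,\ell-2}$ is not appreciably smaller than $u_{k,\ell-1}$ --- small $\ell$ (e.g.\ $k=2,\ \ell=3$), and the degenerate values $k=1$ or $\ell\le 2$ --- lie outside the reach of the spread approximation and will require a separate, elementary treatment.
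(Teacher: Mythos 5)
The statement you are proving is not one this paper actually proves: it is cited from Meagher and Moura \cite{MeMo} as background, and their proof (via the Delta-system/Kneser-graph route) works for \emph{all} positive integers $k,\ell$. The paper's own contribution in this direction is Theorem~\ref{thmu1}, which generalizes the Meagher--Moura result to arbitrary profiles but only for $\ell\ge 600$. Your spread-approximation proposal lives in the same parameter regime, so it cannot recover the cited theorem in full. Your closing paragraph does flag the small-$\ell$ issue, but it frames it as a bookkeeping annoyance requiring ``a separate, elementary treatment''; in fact that regime is precisely where the Meagher--Moura argument earns its keep and where the spread machinery has nothing to say, so this is the substantive gap, not a footnote. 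Already at $k=2,\ell=3$ your Hilton--Milner count $\ell\,u_{k,\ell-2}=u_{k,\ell-1}$ fails to be strict.

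Within the large-$\ell$ regime where your argument is meant to run, it is also heavier than the paper's. You propose the two-stage pipeline of Theorem~\ref{thmkz1} (extract a kernel $A$ plus a small remainder $\ff''$) followed by a Hilton--Milner cleanup to kill $\ff''$. But for $t$-intersecting profiled partitions the paper deliberately skips both of these steps (this is spelled out at the end of Section~\ref{sec2}): since each partition is encoded as an $\ell$-element set on the ground set $2^{[n]}$, one may apply Theorem~\ref{thmkz2} \emph{directly} to $\ff$ in the role of $\s$, with $q=\ell$ and $\varepsilon=1/2$, once $\uu_P$ is checked to be weakly $(r,t)$-spread with $r\ge 48\ell$. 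If $\ff$ is not a star this gives $|\ff|\le\tfrac12|\aaa[T]|$ immediately, with no remainder to dispose of, and simultaneously yields the stability statement. Your extra spread-approximation pass and Hilton--Milner count are not wrong, but they rebuild a road the paper bypasses, and the absorption of $\ff''$ you identify as ``the genuine obstacle'' is an obstacle you created by not using the one-step form of the argument.

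Finally, a small technical point: you would need to verify that $\uu_{k,\ell}$ is $r_0$-spread as a family in $2^{2^{[n]}}$ (for Theorem~\ref{thmkz1}), which is a different and a priori weaker statement than the weak $(r,t)$-spreadness the paper actually computes; the paper avoids this question entirely by not invoking Theorem~\ref{thmkz1} for this result.
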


\begin{thm}[\cite{MeMo}] Fix positive integers $k,\ell,t$. Suppose that either $(k\ge k_0(\ell,t))$ or $(k\ge t+2$ and $\ell\ge \ell_0(k,t))$. Let $\ff\subset\uu_{k,\ell}$ be a $t$-intersecting family of $(k,\ell)$-partitions. Then $|\ff|\le u_{k,\ell-t}$, and the equality is only possible for canonical $t$-intersecting families.
\end{thm}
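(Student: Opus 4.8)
\emph{Proof proposal.} The plan is to deduce this from the spread approximation machinery developed in this paper, in the same manner as the other partition results. First I would encode $(k,\ell)$-partitions as set systems: identify $P = (P_1,\ldots,P_\ell)$ with the $\ell$-element set $\{P_1,\ldots,P_\ell\}$, viewed as a subset of the ground set $\Omega := \binom{[k\ell]}{k}$ of all potential parts. Then $\uu_{k,\ell}\subseteq\binom{\Omega}{\ell}$, two partitions $t$-intersect exactly when the corresponding $\ell$-element sets share at least $t$ elements of $\Omega$, and a canonical family $\aaa^X$ is precisely the family of all members of $\uu_{k,\ell}$ containing a fixed $t$-element set $\{X_1,\ldots,X_t\}$ of pairwise disjoint $k$-subsets of $[k\ell]$; deleting $X_1\cup\cdots\cup X_t$ shows $|\aaa^X| = u_{k,\ell-t}$. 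Under either hypothesis of the theorem the ground set has size $N := |\Omega| = \binom{k\ell}{k}$ enormously larger than $\ell$ (exponentially so when $k$ is large, and a polynomial of degree $k\ge t+2\ge 3$ in $\ell$ when $\ell$ is large), which is the regime in which spread approximations are effective.

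Next, apply the refined spread approximation theorem to $\ff\subseteq\binom{\Omega}{\ell}$ with a spreadness parameter $r=r(k,\ell)$ taken as large as $N$ allows. This produces a kernel family $\s$ — each $S\in\s$ a set of at most $\ell$ pairwise disjoint $k$-subsets of $[k\ell]$ — together with a small exceptional subfamily $\ff^{\mathrm{ex}}$, such that every $F\in\ff$ either contains some $S\in\s$ or lies in $\ff^{\mathrm{ex}}$, and such that for each $S\in\s$ the link $\ff_S = \{F\setminus S : F\in\ff,\ F\supseteq S\}$ is $r$-spread. I would then split $\s$ according to $|S|$. Kernels of size $s\ge t+1$ are cheap: the number of partitions through a fixed family of $s$ disjoint $k$-sets equals $u_{k,\ell-s}$, and $u_{k,\ell-s}/u_{k,\ell-t}$ is minuscule in this range, so the total contribution of such kernels is $o(u_{k,\ell-t})$. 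For kernels of size exactly $t$ one uses that partitions through two \emph{distinct} size-$t$ kernels must still $t$-intersect, which is incompatible with the spreadness of the links unless there is only one such kernel; in that event $\ff$ — apart from negligible parts — is contained in the corresponding canonical family. Finally, a single kernel of size $<t$ would make the corresponding link $(t-|S|)$-intersecting and $r$-spread, hence tiny, and more generally the combined contribution of all kernels of size $<t$ together with $\ff^{\mathrm{ex}}$ is bounded well below $u_{k,\ell-t}$ by exploiting the fact (established in this paper for $t$-intersecting $k$-set families) that spread $t$-intersecting families are small.

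Combining these estimates, either $|\ff|<u_{k,\ell-t}$ strictly, or $\s$ reduces to a single size-$t$ kernel $\{X_1,\ldots,X_t\}$ of disjoint $k$-sets with $\ff^{\mathrm{ex}}=\varnothing$, whence $\ff\subseteq\aaa^X$ and $|\ff|\le u_{k,\ell-t}$, with equality forcing $\ff=\aaa^X$. I expect the main obstacle to be the quantitative calibration in the regime $k\ge t+2$ with $\ell$ large: there $N=\binom{k\ell}{k}$ is only polynomially large in $\ell$, so the attainable spreadness $r$ is correspondingly limited, and crude bounds on $\ff^{\mathrm{ex}}$ of the form $|\ff^{\mathrm{ex}}|\approx u_{k,\ell}/r$ are far too weak; one must instead use the genuine smallness of spread $t$-intersecting families, which is exactly the point where the refinements of the original spread approximation theorems are required. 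The equality characterization then follows by tracing the chain of inequalities through the equality case.
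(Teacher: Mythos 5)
Your proposal is in the right framework but is more roundabout than the paper's own route, and it leaves the crucial computation undone. The paper treats this statement via Theorem~\ref{thmu1}, and the key observation there is that for the encoding you describe, $\ff$ is already an $\ell$-uniform subfamily of $\uu_{k,\ell}\subset\binom{\Omega}{\ell}$, so one can skip the spread approximation step (Theorem~\ref{thmkz1}) entirely and apply Theorem~\ref{thmkz2} \emph{directly} to $\s=\ff$ with $q=\ell$ and $\varepsilon=\frac12$: there is no kernel family and no exceptional remainder $\ff^{\mathrm{ex}}$ to control, and one immediately gets the strong stability dichotomy (either $\ff$ lies in a canonical family, or $|\ff|\le\frac12\,u_{k,\ell-t}$). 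Your worry that ``crude bounds on $\ff^{\mathrm{ex}}$ of the form $|\ff^{\mathrm{ex}}|\approx u_{k,\ell}/r$ are far too weak'' is therefore not the real issue --- with $q$ taken up to $\ell$ the remainder from Theorem~\ref{thmkz1} is at most $(r/r_0)^{q+1}u_{k,\ell}$, exponentially small in $\ell$, so it would be negligible anyway --- the real issue is that you do not need it at all. Your kernel-by-size analysis (bounding kernels of size $>t$, arguing at most one kernel of size exactly $t$) is essentially re-deriving pieces of Lemma~\ref{lemkeyred} and Theorem~\ref{thmkz2} by hand; moreover, to make ``the total contribution of such kernels is $o(u_{k,\ell-t})$'' rigorous you would also have to bound the \emph{number} of kernels of each size, which is precisely what Lemma~\ref{lemkeyred}(iv) does.

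The substantive step you have not actually carried out is the one the paper spends its effort on: verifying that $\uu_{k,\ell}$ (in fact the profiled $\uu_P$) is weakly $(r,t)$-spread with $r\ge 48\ell$, so that the hypothesis $\varepsilon r\ge 24q$ of Theorem~\ref{thmkz2} holds. Noting that $|\Omega|=\binom{k\ell}{k}$ is large is not a substitute --- spreadness is a property of the family, not of the ambient ground set --- and the required inequality $a_t/b_U\ge r^{s}$ for sets $U$ of size $t+s$ is exactly the ratio-of-Stirling-type computation the paper performs to conclude $\uu_P$ is weakly $(\ell^2/12,t)$-spread. Finally, the hypothesis $k\ge t+2$ you quote from the target statement is stronger than what the method needs: the paper's Theorem~\ref{thmu1} already works under $k_{t+1}\ge2$ (i.e.\ $k\ge2$ in the $(k,\ell)$ case), which is one of the improvements being claimed over Meagher--Moura.
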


In the theorem below, we extend their results to a large class of profiled partitions, significantly improve the dependence between the parameters, and give a strong stability result.

\begin{thm}\label{thmu1} Let $t,\ell$ be positive integers and $P = (k_1,\ldots, k_\ell)$ be a profile. Assume that the following holds:  $\ell\ge 1000$; $t\le \ell/2$; $k_{t+1}\ge 3$.   If $\ff\subset \uu_P$ is $t$-intersecting then $|\ff|\le |\aaa_{P,t}^X|$. Moreover, if $\ff$ is not contained in $\aaa_{P,t}^X$ for some $X$ then $|\aaa|\le \frac 12 |\aaa_{P,t}^X|$.
\end{thm}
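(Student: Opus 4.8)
The plan is to apply the spread approximation technique, exactly as the introduction advertises, specializing the abstract $t$-intersecting machinery (refined earlier in the paper) to the ambient set system of $(k,\ell)$-profiled partitions. First I would encode a partition $P=(P_1,\dots,P_\ell)\in\uu_P$ as its set of parts, so that a $t$-intersecting family of partitions becomes a $t$-intersecting family in the sense of set systems on the ground set $V=\binom{[n]}{k_1}\cup\dots\cup\binom{[n]}{k_\ell}$ of all admissible blocks (we may even restrict to distinct sizes to keep the blocks genuinely distinct; the equal-size case $k_1=\dots=k_\ell$ is the hardest and must be handled, so one treats a block together with the index of the part it occupies, or uses that two blocks of a partition are automatically disjoint). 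The key numerical input is a \emph{spreadness} estimate: for a uniformly random partition with profile $P$, the probability that a fixed block (or a fixed $s$-tuple of pairwise disjoint blocks $X_1,\dots,X_s$ with $|X_i|=k_i$) appears as parts is roughly $\prod \binom{n-\sum_{j<i}k_j}{k_i}^{-1}$, and more importantly the \emph{conditional} probabilities decay geometrically: adding one more prescribed block multiplies the count by a factor that is at most (say) $\tfrac12$ once $k_{t+1}\ge 2$ and $\ell\ge 600$. This is what lets one run the spread approximation: one shows $\uu_P$ itself, and every ``link'' obtained by fixing a few blocks, is $r$-spread with a good $r$.

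The main steps, in order, would be: (1) \emph{Spread lemma input.} Prove the conditional-count / spreadness bounds for $\uu_P$ described above; the condition $k_{t+1}\ge 2$ is used precisely to guarantee that among the ``small'' parts there are enough of size $\ge 2$ so that fixing them genuinely shrinks the family, and $t\le\ell/2$ guarantees there remain $\ge\ell/2$ free parts after fixing $t$ of them. (2) \emph{Apply the refined spread approximation theorem} (the one the paper re-proves to be self-contained) to obtain, for the $t$-intersecting family $\ff\subset\uu_P$, a ``kernel'' — a $t$-intersecting family $\s$ of sets of size $<r$ (i.e., small partial patterns of prescribed blocks) such that all but a negligible fraction of $\ff$ is ``captured'' by $\s$, together with the bound $|\ff|\le |\ff\cap\langle\s\rangle| + (\text{negligible})\cdot|\uu_P|$. (3) \emph{Analyze the kernel.} Since $\s$ is a $t$-intersecting family of small sets, the Frankl/Ahlswede--Khachatrian-type structure (in the refined form proved earlier, valid in the relevant range of parameters) forces $\s$ to essentially be a ``$t$-star'': there is a $t$-set of blocks $X=\{X_1,\dots,X_t\}$ contained in every member, \emph{or} $\s$ has a bounded-size structure whose up-set in $\uu_P$ is much smaller than $|\aaa_{P,t}^X|$. (4) \emph{Compare sizes.} If $\s$ is a $t$-star at $X$, then $\ff\subset\aaa_{P,t}^X$ up to the negligible error, and one checks $|\aaa_{P,t}^X|$ dominates the error term, giving $|\ff|\le|\aaa_{P,t}^X|$ with equality analysis; if $\s$ is not a $t$-star, one uses the geometric decay of conditional counts to bound $|\langle\s\rangle\cap\uu_P|$ by $\tfrac12|\aaa_{P,t}^X|$ plus negligible, yielding the stability conclusion $|\ff|\le\tfrac12|\aaa_{P,t}^X|$.

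The step I expect to be the main obstacle is (3)–(4) in the \emph{equal-size} case $k=k_1=\dots=k_\ell$, which is the actual content of Meagher--Moura's conjecture: there the blocks of a partition are indistinguishable $k$-sets, so the reduction to a clean set-system $t$-intersection problem is least transparent, and the spreadness constant must be controlled carefully enough (uniformly in $k$, only assuming $k\ge 2$, i.e.\ $k_{t+1}\ge2$) that the negligible error from spread approximation really is smaller than $\tfrac12|\aaa_{P,t}^X|$. Getting the thresholds to come out at $\ell\ge 600$ and $t\le\ell/2$ rather than $\ell\ge\ell_0(k,t)$ requires that every constant in the spread lemma and in the kernel analysis be absolute; tracking this through the refined theorems — and ruling out the ``intermediate'' near-extremal kernels in the Ahlswede--Khachatrian landscape when $t$ is large relative to $\ell$ — is where the real work lies. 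The general-profile extension on top of the equal-size case should then be a routine bookkeeping matter, since only the sizes of the free parts, not their equality, enter the counting.
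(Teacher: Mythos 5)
Your proposal follows the general spread-approximation pipeline, but it misses the specific simplification the paper exploits for this theorem, and one of your steps is mischaracterized.

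The key point you miss: for $\uu_P$ (and likewise for $\pp_n^\ell$), the paper does \emph{not} invoke the spread-approximation construction (Theorem~\ref{thmkz1}) to build a low-uniformity kernel $\s$ with a small remainder $\ff'$. Since every member of $\ff\subset\uu_P$ already has uniformity exactly $\ell$ in the set interpretation (a partition is an $\ell$-element subset of $2^{[n]}$), one may take $q=\ell$ and apply Theorem~\ref{thmkz2} \emph{directly to $\ff$ itself}, with $\varepsilon=1/2$. The hypothesis $\varepsilon r\ge 24q$ then reads $\ell^2/24\ge 24\ell$, which is exactly where $\ell\ge 600$ comes from (together with the weak $(\ell^2/12,t)$-spreadness of $\uu_P$, proved by the factorial-ratio calculation you correctly anticipate, and using $k_{t+1}\ge 2$ and $t\le\ell/2$). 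The dichotomy of Theorem~\ref{thmkz2} then gives the theorem immediately: either $\ff$ is a trivial $t$-intersecting family, hence $\ff\subset\aaa_{P,t}^X$, or $\ff$ is non-trivial and $|\ff|=|\uu_P[\ff]|\le\frac12|\uu_P[T]|=\frac12|\aaa_{P,t}^X|$. There is no remainder $\ff'$ to eliminate and no ``$\frac12+o(1)$'' slack to absorb; your route via Theorem~\ref{thmkz1} would reintroduce both and would require an extra bootstrap step (as the paper does need for Theorems~\ref{thm6} and~\ref{thm1}, but not here). The paper flags this explicitly in Section~\ref{sec2}: steps 3 and 5 of the general outline are omitted for Theorems~\ref{thmp1} and~\ref{thmu1}.

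Two smaller points. First, you describe step (3) as invoking a ``Frankl/Ahlswede--Khachatrian-type structure'' for the kernel; the mechanism actually used is Theorem~\ref{thmkz2}, which is a spread/sunflower-flavoured argument about weakly $(r,t)$-spread ambient families and bears no relation to the Ahlswede--Khachatrian classification of $t$-intersecting families of $k$-sets. Second, your worry that the equal-size case $k_1=\dots=k_\ell$ is ``least transparent'' because blocks are indistinguishable is unfounded under the paper's encoding: the blocks of a single partition are pairwise disjoint nonempty sets and hence pairwise distinct as elements of $2^{[n]}$, so every partition is honestly an $\ell$-element set regardless of the profile, and the proof is uniform across profiles. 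The genuine content of the $k_{t+1}\ge 2$ hypothesis is in the factorial manipulation that yields the spreadness constant $\ell^2/12$ (one needs enough ``free mass'' after fixing $t$ parts), not in disambiguating blocks.
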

We did not try to optimize constant $1000$ in the statement. The point is that the bound is reasonable.

\subsection{Partially $t$-intersecting partitions}
In the notation of the previous subsection, let $P = (k_1,\ldots, k_\ell)$ be a profile that additionally satisfies $k:=k_\ell \ge t$. Put $n = \sum_i k_i$.  What is the largest partially $t$-intersecting subfamily of $\uu_P$? A natural candidate is a {\it canonically partially $t$-intersecting family} $\cc^T_P\subset \uu_P$, where $T$ is a set of size $t$, and $\cc^T_P$ consists of all partitions from $\uu_P$ that fully contain $T$ in one of its parts.
Similarly, for the case of $(k,\ell)$-partitions,   a {\it canonically partially $t$-intersecting partition} is a family $\cc_{k,\ell}^T\subset \uu_{k,\ell}$ that consists of all $(k,\ell)$-partitions with a part that contains a fixed set $T$, $|T| = t$.  Meagher and Moura \cite{MeMo} conjectured the following.
\begin{conj}[Meagher and Moura, \cite{MeMo}]
  Let $\ell,k,t$ be integers and $\ff\subset \uu_{k,\ell}$ be a partially $t$-intersecting family. Then, for a $t$-element $T\subset [k\ell]$,
  $$|\ff|\le |\cc_{k,\ell}^T|.$$
\end{conj}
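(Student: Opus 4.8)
The plan is to reduce the conjecture to an ordinary intersecting‐family problem and then run the spread approximation machinery of Zakharov and the author \cite{KuZa}. We may assume $k\ge t$ (otherwise $\cc_{k,\ell}^{T}=\varnothing$), $t\ge 2$ (for $t=1$ the statement is vacuous: any two partitions of $[k\ell]$ partially $1$-intersect and $\cc_{k,\ell}^{T}=\uu_{k,\ell}$ for $|T|=1$), $\ell\ge\ell_0(k,t)$, and $|\ff|\ge|\cc_{k,\ell}^{T}|$ (otherwise there is nothing to prove). For a $(k,\ell)$-partition $P=(P_1,\dots,P_\ell)$ put $\partial_t(P):=\bigcup_{i=1}^{\ell}\binom{P_i}{t}\subseteq\binom{[k\ell]}{t}$, a collection of exactly $\ell\binom{k}{t}$ of the $t$-subsets of $[k\ell]$. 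Since $\partial_t(P)$ is a vertex‐disjoint union of $\ell$ copies of the complete $t$-uniform hypergraph on $k$ vertices, the parts of $P$ are recovered as its connected components, so $\partial_t$ is injective. A common $t$-subset of a part of $P$ and a part of $Q$ is precisely an element of $\partial_t(P)\cap\partial_t(Q)$, so $\ff$ is partially $t$-intersecting if and only if $\{\partial_t(P):P\in\ff\}$ is an intersecting family, and $|\ff|$ equals the size of that family. Finally $|\cc_{k,\ell}^{T}|=\binom{k\ell-t}{k-t}\,u_{k,\ell-1}$ and $|\cc_{k,\ell}^{T}|/u_{k,\ell}=\ell\cdot\frac{k(k-1)\cdots(k-t+1)}{(k\ell)(k\ell-1)\cdots(k\ell-t+1)}=\Theta_{k,t}(\ell^{1-t})$, so the conjecture is a ``dictatorship'' statement: the extremal $\ff$ must consist, up to a lower‐order term, of all partitions having one fixed $t$-set inside a part.

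I would then apply the spread approximation technique — in the refined form developed earlier in this paper, which in particular keeps the extracted kernels of bounded size — to the intersecting family $\{\partial_t(P):P\in\ff\}$, taking ground set $\binom{[k\ell]}{t}$ but measuring the ``size'' of a collection $R$ of $t$-sets by $|\bigcup R|$, the number of elements of $[k\ell]$ it involves; this is the weighting consistent with the partition counts, since fixing $s$ elements inside parts changes $u_{k,\ell}$ by a factor of order $\ell^{-\Theta(s)}$. The output is an approximating family $\s$ of \emph{kernels} — each $S\in\s$ a collection of $t$-subsets of $[k\ell]$ with $|\bigcup S|=O_{k,t}(1)$ — such that: (i) each link $\ff_S:=\{P\in\ff:S\subseteq\partial_t(P)\}$ is nonempty and is spread in the appropriate relative sense inside $\binom{[k\ell]}{t}$, and $|\s|\le(\log\ell)^{O_{k,t}(1)}$; (ii) every $P\in\ff$ outside a genuinely spread residual subfamily, of size at most $o(|\cc_{k,\ell}^{T}|)$, contains some kernel in its trace. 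Guaranteeing these properties — in particular bounding the kernel sizes and hence $|\s|$, and controlling the residual — is exactly the content of the refinements to the technique carried out in the preceding sections.

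The heart of the argument is to show that all kernels share one common $t$-set. First, any two kernels $S,S'$ share a $t$-set: otherwise, using the spreadness of the links of bounded‐size kernels, one chooses $P\in\ff_S$ and $Q\in\ff_{S'}$ with $\partial_t(P)\cap\partial_t(Q)=\varnothing$, contradicting partial $t$-intersection. This disjointness step invokes the quantitative spread/sunflower estimates of Alweiss--Lovett--Wu--Zhang \cite{Alw} (in the refined form established earlier) and is delicate precisely because $\partial_t(P)$ is not an arbitrary set but a vertex‐disjoint union of $\ell$ $t$-cliques, so one must rule out that conditioning on a kernel forces the blocks of $P$ to concentrate. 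Thus $\s$ is intersecting. Now a collection of $t$-subsets of a single $t$-set equals that $t$-set, so a kernel with $|\bigcup S|=t$ is a singleton $\{A\}$, while any kernel with $|\bigcup S|\ge t+1$ forces at least $t+1$ elements of $[k\ell]$ into parts of $P$, whence $|\ff_S|=O_{k,t}(\ell^{-(t+1)(t-1)/t}u_{k,\ell})=o(|\cc_{k,\ell}^{T}|)$. If no kernel is a singleton, then $|\ff|\le\sum_{S\in\s}|\ff_S|+o(|\cc_{k,\ell}^{T}|)\le|\s|\cdot o(|\cc_{k,\ell}^{T}|)+o(|\cc_{k,\ell}^{T}|)=o(|\cc_{k,\ell}^{T}|)$, contradicting $|\ff|\ge|\cc_{k,\ell}^{T}|$. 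Hence some kernel equals a singleton $\{T\}$; since $\s$ is intersecting, every kernel then contains $T$. Consequently all but $o(|\cc_{k,\ell}^{T}|)$ of the partitions of $\ff$ have $T$ inside a part, i.e. $|\ff|\le(1+o(1))|\cc_{k,\ell}^{T}|$.

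It remains to turn this into the exact bound and to identify the extremal families. Write $\ff=\ff'\sqcup\ff''$ with $\ff'=\ff\cap\cc_{k,\ell}^{T}$ and $\ff''$ the partitions of $\ff$ having no part that contains $T$; by the previous paragraph $|\ff''|=o(|\cc_{k,\ell}^{T}|)$. If $\ff''\ne\varnothing$, fix $P\in\ff''$: then $T$ is split among at least two parts of $P$, and for $\ell$ large one can construct $\Omega_{k,t}(|\cc_{k,\ell}^{T}|)$ partitions $Q\in\cc_{k,\ell}^{T}$ that do not partially $t$-intersect $P$ — choose the block of $Q$ containing $T$ to meet every part of $P$ in at most $t-1$ elements, then complete to a partition all of whose parts again meet every part of $P$ in at most $t-1$ elements (a ``$(t-1)$-transversal'' partition, which exists by a Hall/greedy argument when $\ell$ is large). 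All such $Q$ must avoid $\ff$, so $|\cc_{k,\ell}^{T}\setminus\ff'|=\Omega_{k,t}(|\cc_{k,\ell}^{T}|)$, whence $|\ff|=|\ff'|+|\ff''|\le\bigl(|\cc_{k,\ell}^{T}|-\Omega_{k,t}(|\cc_{k,\ell}^{T}|)\bigr)+o(|\cc_{k,\ell}^{T}|)<|\cc_{k,\ell}^{T}|$, contradicting $|\ff|\ge|\cc_{k,\ell}^{T}|$. Therefore $\ff''=\varnothing$, so $\ff\subseteq\cc_{k,\ell}^{T}$, and combined with $|\ff|\ge|\cc_{k,\ell}^{T}|$ this forces $\ff=\cc_{k,\ell}^{T}$. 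I expect the two genuinely hard points to be: (a) the disjointness/pseudorandomness step of the third paragraph, since the clique structure of the $t$-traces means they cannot be treated as generic spread sets, and one must control how a kernel constrains the block structure of a partition in its link; and (b) the $(t-1)$-transversal construction of the last paragraph — the familiar last mile of EKR‐type stability — where one must lower‐bound the number of extensions of $T$ that a partition outside $\cc_{k,\ell}^{T}$ is forced to partially $t$-avoid.
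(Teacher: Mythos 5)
Your proposal is conceptually close to the paper's but differs in one structural choice that turns out to matter a lot. The paper encodes each $(k,\ell)$-partition $P$ as the edge set of $\ell$ disjoint $k$-cliques, i.e.\ as a subset of $\binom{[k\ell]}{2}$ of size $\ell\binom{k}{2}$; partial $t$-intersection then \emph{implies} (one direction only) $\binom{t}{2}$-intersection of edge sets, and the machinery is run for $\binom{t}{2}$-intersecting families. The price of the one-directional implication is paid by the key observation that a collection of $\binom{t}{2}$ edges has $m(X)\ge t-1$ with equality iff it is a $t$-clique, and that $\binom{t}{2}+s$ edges force $m(X)\ge t-1+s/k$; this is exactly what makes the weak $(r',\binom{t}{2})$-spreadness go through. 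You instead encode $P$ as $\partial_t(P)\subset\binom{[k\ell]}{t}$, which gives a genuine \emph{equivalence} between partial $t$-intersection and $1$-intersection and thus sidesteps that subtlety entirely --- an appealing simplification. However, the price you pay is worse spreadness: on $\binom{[k\ell]}{t}$ the worst-case collection $X$ is all $\binom{k}{t}$ $t$-subsets of one $k$-block, giving only $r\approx\ell^{(k-1)/\binom{k}{t}}$, i.e.\ $\ell^{\Theta(1/k^{t-1})}$, versus the paper's $\ell^{\Theta(1/k)}$ with edges. This still works for $\ell$ sufficiently large, which is all the theorem claims, so your route is viable --- just with a far worse dependence of $\ell_0$ on $k,t$.

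The one genuine gap is your suggestion to ``measure the size of a collection $R$ of $t$-sets by $|\bigcup R|$.'' Theorems~\ref{thmkz1} and~\ref{thmkz2} (and the Alweiss--Lovett--Wu--Zhang input they rely on) are formulated with the ordinary ground-set cardinality; a custom weighting is a nontrivial reformulation that neither the paper nor \cite{KuZa} carries out, and you do not indicate how to prove it. You don't actually need it --- plain $r$-spreadness with $r=\ell^{\Theta(1/k^{t-1})}$ suffices for $\ell$ large --- but as written the step invokes theorems that don't exist. Also, the claimed bound $|\s|\le(\log\ell)^{O_{k,t}(1)}$ is not an output of Theorem~\ref{thmkz1} and is not needed; only the uniformity bound $|S|\le q$ and the bound on the remainder are used. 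Your final step (the ``$(t-1)$-transversal'' construction to kill $\ff''$) is in the same spirit as the paper's Lemma~\ref{lemest1}, but the paper proves the required lower bound $\ell^{-2k^2}u_{k,\ell}$ on derangements by exposing a random permutation block by block; your greedy/Hall sketch would need to be made quantitative at the same level, since the remainder $\ff'$ is only bounded by $\ell^{-k^7}u_{k,\ell}$ rather than genuinely zero, and the comparison of the two error terms is where the proof actually closes.
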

Note that any two partitions are partially $1$-intersecting, and thus $\uu_{k,\ell}$ itself is a partially $1$-intersecting family. The Meagher--Moura conjecture was proved by Godsil and Meagher \cite{GodMe} for $k=t=2$ and all $\ell$. Meagher, Shirazi and Stevens \cite{MSS} proved it for $t=2$ and any $k$, provided $\ell$ is sufficiently large. The approach in the first paper is algebraic, while the second paper uses Delta-systems.

We prove the following theorem, which is the main result of this paper.
\begin{thm}\label{thm1}
  The Meagher--Moura conjecture is true for any $k,t$, provided $\ell$ is sufficiently large. Moreover, the only extremal examples have the form $\cc_{k,\ell}^T$ for some $T$.
\end{thm}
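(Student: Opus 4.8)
The plan is to recast the partially $t$-intersecting condition as an ordinary intersecting condition for uniform set families, and then feed the result into the spread approximation machinery refined earlier in the paper. For a $(k,\ell)$-partition $P$ of $[k\ell]$, set
\[\Phi(P)=\bigl\{S\in\binom{[k\ell]}{t}: S\subseteq B\text{ for some block }B\text{ of }P\bigr\}.\]
Since the blocks of $P$ are pairwise disjoint, $|\Phi(P)|=\ell\binom kt$ for every $P$, and $P,Q$ partially $t$-intersect if and only if $\Phi(P)\cap\Phi(Q)\neq\emptyset$. For $t\ge 2$ the map $\Phi$ is injective: the blocks of $P$ are precisely the $k$-subsets $B$ with $\binom Bt\subseteq\Phi(P)$, because any two $t$-subsets of such a $B$ share $t-1\ge 1$ elements, hence lie in a common block, forcing all of $B$ into one block. (The case $t=1$ is trivial: any two partitions partially $1$-intersect, and $\cc^T_{k,\ell}=\uu_{k,\ell}$.) Thus $\g:=\{\Phi(P):P\in\ff\}$ is an \emph{intersecting} family of $\ell\binom kt$-element subsets of $\binom{[k\ell]}{t}$ with $|\g|=|\ff|$, and $\cc^T_{k,\ell}$ corresponds to the star $\{\Phi(P):T\in\Phi(P)\}$, whose size is $\binom{k\ell-t}{k-t}u_{k,\ell-1}$.

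Next, apply the spread approximation theorem to $\g$ with a spread parameter $r$ chosen large in terms of $k,t$ — concretely, one can afford $r$ a large power of $\ell$, since $\g$ has uniformity $m=\ell\binom kt$ while its ground set has size $\binom{k\ell}{t}=\Theta_{k,t}(\ell^t)$, so $\g$ lives in a sparse regime once $\ell$ is large and $t\ge 2$. Because $\g$ is intersecting it cannot be $r$-spread for $r>2m$ (otherwise any fixed member meets fewer than $\tfrac12|\g|$ others, so a disjoint pair exists), so the kernel extraction runs and produces a family $\s$ of subsets of $\binom{[k\ell]}{t}$, each contained in some $\Phi(P)$, such that: (i) each $S\in\s$ has bounded size $|S|\le s_0(k,t)$; (ii) $|\s|$ is bounded in terms of $k,t$; and (iii) every $\Phi(P)$, $P\in\ff$, contains some $S\in\s$, with the number of $P\in\ff$ not captured cleanly being $o(|\cc^T_{k,\ell}|)$ as $\ell\to\infty$. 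Equivalently, $|\ff|\le\sum_{S\in\s}g(S)+o(|\cc^T_{k,\ell}|)$, where $g(S):=|\{P\in\uu_{k,\ell}:S\subseteq\Phi(P)\}|$.

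It then remains to understand $g(S)$ — the number of $(k,\ell)$-partitions all of whose blocks jointly cover every $t$-set of $S$ — uniformly over the bounded shapes $S$ can take. If $S=\{T\}$ is a single $t$-set then $g(S)=|\cc^T_{k,\ell}|$ exactly. For every other $S$ one gets $g(S)=O_{k,t}(|\cc^T_{k,\ell}|/\ell)$: as soon as $S$ contains two distinct $t$-sets, a partition $P$ with $S\subseteq\Phi(P)$ must place an additional specified element into some block or constrain a second block, and either way the count drops by a factor $\Theta_k(\ell)$ — for instance $g\bigl(\binom{T'}{t}\bigr)=\binom{k\ell-t-1}{k-t-1}u_{k,\ell-1}$ for a $(t{+}1)$-set $T'$, and $g\bigl(\binom Bt\bigr)=u_{k,\ell-1}$ for a $k$-set $B$. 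Since $|\s|$ is bounded, all kernels of size $\ge 2$ together contribute $O_{k,t}(|\cc^T_{k,\ell}|/\ell)=o(|\cc^T_{k,\ell}|)$, and a short argument (again using that $\g$ is intersecting) shows $\s$ cannot contain two distinct singleton kernels with comparably large shadows; hence for $\ell\ge\ell_0(k,t)$ we get $|\ff|\le|\cc^T_{k,\ell}|$, with equality forcing $\s=\{\{T\}\}$, no exceptional partitions, and $\ff=\cc^T_{k,\ell}$. A Hilton--Milner-type refinement — if $\ff$ lies in no star $\cc^T_{k,\ell}$ then $|\ff|$ is bounded away from $|\cc^T_{k,\ell}|$ — then gives the stated uniqueness of extremal families.

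The crux is the second step: making the spread approximation funnel $\g$ into essentially one bounded kernel with a genuinely lower-order error. The intuition is that a union of two large stars is \emph{not} partially $t$-intersecting, so branching into several comparably large kernels is incompatible with $\g$ being intersecting; but turning this into the quantitative statement used above — and doing so with parameters clean enough that the final comparison with $|\cc^T_{k,\ell}|$ goes through for all $k,t$ once $\ell$ is large — is exactly what the refined spread approximation theorem is for, and is where the real work lies. The counting of $g(S)$ in the last step is elementary but must be done uniformly in the shape of $S$; the resulting inequalities between $g(S)$ and $|\cc^T_{k,\ell}|$ are what dictate the (unspecified) lower bound on $\ell$.
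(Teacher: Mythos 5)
Your encoding differs genuinely from the paper's, and is in fact cleaner in one respect. The paper maps each $(k,\ell)$-partition $P$ to the set of all \emph{pairs} $\{x,y\}$ lying in a common block, so that $P,Q$ partially $t$-intersecting implies the edge-sets share $\binom t2$ elements; but the converse fails, and the paper must argue that a $t$-clique is the ``most economical'' $\binom t2$-intersection. Your encoding by $t$-subsets ($\Phi(P)=\{S\in\binom{[k\ell]}t: S\subseteq\text{some block of }P\}$) gives an exact equivalence: $P,Q$ partially $t$-intersect iff $\Phi(P)\cap\Phi(Q)\neq\emptyset$. This turns the problem into a plain \emph{intersecting} problem ($t'=1$ in the language of Theorems~\ref{thmkz1} and~\ref{thmkz2}), which bypasses the subtlety about which value of $t'$ the ambient family is weakly $(r,t')$-spread for. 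This is a real structural simplification and would be worth noting.

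That said, the proposal has three genuine gaps. First, the spreadness of $\g=\Phi(\uu_{k,\ell})$ is asserted (``$\g$ lives in a sparse regime''), not proved. The analogue of the paper's subpartition-counting bound is needed: given a collection $U$ of $s+1$ many $t$-subsets with $U\subseteq\Phi(P)$, the induced subpartition $X(U)$ must satisfy a quantitative lower bound on $m(X(U))=\sum_j(|X_j|-1)$. The extremal case is $U=\binom Bt$ for a $k$-set $B$ (your own injectivity argument), where $|U|=\binom kt$ but $m(X)=k-1$; extracting the worst-case exponent $r\approx\ell^{(k-t)/(\binom kt-1)}$ and verifying it meets the hypotheses of both theorems is a real computation, not a remark. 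Second, the claim that $|\s|$ is bounded in terms of $k,t$ is not what Theorem~\ref{thmkz1} delivers; what is controlled is $|\aaa[\s]|$ via Theorem~\ref{thmkz2} (Lemma~\ref{lemkeyred}(iv) bounds $|\W_i|$, not $|\s|$), and the argument should be phrased in those terms. Third — and most importantly — the step ``a Hilton--Milner-type refinement then gives the stated uniqueness'' is precisely what the paper must work for. Showing the remainder $\ff'$ is empty requires a quantitative statement that every $Y\notin\cc^T_{k,\ell}$ fails to partially $t$-intersect a non-negligible fraction of $\cc^T_{k,\ell}$; the paper does this via Lemma~\ref{lemest1}, a block-by-block exposure argument giving a lower bound of $\ell^{-2k^2}u_{k,\ell}$. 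Nothing in your encoding makes this step come for free, and without it the extremal and uniqueness claims do not follow. In short: the reformulation is a nice and different route, but the quantitative backbone — spreadness estimates and the derangement-type lemma — is exactly the substance of the proof and is missing.
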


This could be generalized to a much larger class of profiled partitions.

\begin{thm}\label{thm2}
  For any $k,t$ there exists $\ell$ such that for any $P$ as above every largest partially $t$-intersecting family of $\uu_P$ is of the form $\cc^T_P$ for some $T$.
\end{thm}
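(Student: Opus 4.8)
\emph{Proof proposal.} The plan is to rerun the spread-approximation argument used for the partially $t$-intersecting problem (as in Theorem~\ref{thm1}), now with an arbitrary profile $P=(k_1,\dots,k_\ell)$, $k=k_\ell\ge t$, in place of the flat one. We may assume $t\ge 2$, since for $t=1$ the whole family $\uu_P$ equals $\cc^T_P$ for any singleton $T$. Two remarks organize the proof. First, only parts of size at least $t$ can witness a partial $t$-intersection, and since $k_\ell\ge t$ there is always one; accordingly, encode a partition $P=(P_1,\dots,P_\ell)$ by its $t$-trace $\tau(P)=\bigcup_j\binom{P_j}{t}\subseteq\binom{[n]}{t}$, a vertex-disjoint union of cliques (one per part of size $\ge t$). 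Then $P$ and $Q$ partially $t$-intersect exactly when $\tau(P)\cap\tau(Q)\neq\emptyset$, so a partially $t$-intersecting $\ff\subseteq\uu_P$ becomes an intersecting family of traces, and $\cc^T_P=\{P\in\uu_P:T\in\tau(P)\}$ for a fixed $T\in\binom{[n]}{t}$. Second, the relevant largeness parameter is $n=\sum_i k_i$, with $\ell\le n\le k\ell$, so $n$ is large precisely when $\ell$ is; for the flat profile largeness means many parts, whereas for a profile like $(1,\dots,1,k)$ --- where Theorem~\ref{thm2} specializes to the Ahlswede--Khachatrian theorem in the large-$n$ range --- it means a single large part with many possible locations, and the argument has to cover both extremes uniformly in $P$.

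\textbf{(1)} Apply the refined spread-approximation lemma to $\ff$, relative to trace-containment: this produces a family $\s$ of seeds, each of size bounded in terms of $k,t$ only, such that every $P\in\ff$ trace-contains some $S\in\s$, and either (a) some seed $S_0$ is \emph{heavy} --- the subfamily of $\ff$ trace-containing $S_0$ has size at least $c|\ff|$ for an absolute constant $c>0$ --- or (b) the residual family is $\rho$-spread with $\rho=\rho(n)\to\infty$. \textbf{(2)} \emph{Spread case.} If no seed is heavy, $\rho$-spreadness together with the intersecting property of the traces gives $|\ff|<|\cc^T_P|$ once $n$ is large, via the counting lemma of the spread-approximation method; here one needs a lower bound on $|\cc^T_P|$, obtained by placing $T$ inside the largest part and partitioning the remaining $n-t$ elements freely with the truncated profile, and one checks that this bound beats the spread bound uniformly in $P$. \textbf{(3)} \emph{Heavy-seed case.} If $S_0$ is heavy, combine the intersecting condition with the spread structure of the rest of $\ff$ to force $S_0$ to be a $t$-element set lying inside a single part of almost every partition that trace-contains it; a bounded cleanup then yields a $t$-set $T$ with $\ff\subseteq\cc^T_P$ up to a $o(|\cc^T_P|)$ error. \textbf{(4)} \emph{Exactness and uniqueness.} Finally, a stability estimate --- any partially $t$-intersecting family not contained in a single $\cc^T_P$ has size at most $(1-\Omega(1))|\cc^T_P|$ for $n$ large --- upgrades ``$\ff\subseteq\cc^T_P$ up to lower order'' to $|\ff|\le|\cc^T_P|$, with the families $\cc^T_P$ as the only extremal examples.

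I expect the main obstacle to be the spread case (2), made uniform over all admissible profiles. For ``degenerate'' profiles --- many parts of size exactly $t$, or one huge part --- the family $\cc^T_P$ is comparatively small, so the bound extracted from $\rho$-spreadness must be correspondingly sharp; guaranteeing a spread parameter $\rho(n)\to\infty$ that works \emph{simultaneously} for every profile with $k_\ell=k$ is the delicate point, and it is precisely why one encodes via the $t$-trace, whose size is controlled by $\ell\binom{k}{t}$ rather than by the ambient $\binom{n}{t}$. A secondary difficulty is the cleanup in (3): ruling out a heavy seed that is ``spread across two parts'', and tracking the $o(|\cc^T_P|)$ exceptional partitions precisely enough to recover uniqueness. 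Since the theorem only asserts a finite threshold $\ell_0(k,t)$, the constants carry slack, but the spread-approximation inequalities still have to be made explicit enough to exhibit such an $\ell_0$.
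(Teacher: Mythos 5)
The paper gives no separate proof of Theorem~\ref{thm2}; it states the proof ``goes almost verbatim'' to that of Theorem~\ref{thm1}, which encodes each partition by its set of \emph{pairs} (so $\uu_P\subset 2^{\binom{[n]}{2}}$) and works with $\binom{t}{2}$-intersecting set families. Your proposal uses a genuinely different encoding: the $t$-trace $\tau(P)\subset\binom{[n]}{t}$ and a $1$-intersecting target (for $t=2$ the two coincide). This buys you something real: the equivalence ``$P,Q$ partially $t$-intersect $\iff \tau(P)\cap\tau(Q)\neq\emptyset$'' is exact, so you avoid the complication the paper explicitly has to handle, namely that two edge sets can intersect in $\ge\binom{t}{2}$ elements without the underlying partitions being partially $t$-intersecting (the paper's ``most economical'' analysis and its $m(X(E))\ge t-1+s/k$ observation). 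On the other hand, you pay for it. First, $\tau$ is not injective on $\uu_P$: if the profile has parts of size $<t$, two partitions agreeing on all parts of size $\ge t$ share a trace, so you must either pass to the image $\tau(\uu_P)$ and note that the fibre size is a constant depending only on the profile (hence cancels in the comparison $|\ff|$ vs.\ $|\cc^T_P|$), or symmetrize $\ff$ first; your writeup does not address this. Second, the spread of $\uu_P$ in the trace encoding is governed by the worst set $X$, which is all $\binom{m}{t}$ $t$-subsets of an $m$-set; this gives $r\approx\ell^{(k-1)/\binom{k}{t}}$, which for $t>2$ is far weaker than the paper's $r\approx\ell^{\Theta(1/k)}$ from the edge encoding, so the resulting $\ell_0(k,t)$ is much larger (still finite, so the theorem is not lost). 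Third, the ``heavy seed versus $\rho$-spread residual'' dichotomy in your step (1) is not how Theorems~\ref{thmkz1}--\ref{thmkz2} actually operate: Theorem~\ref{thmkz1} \emph{always} hands you a $1$-intersecting $\s$ of bounded seed size plus a small remainder, and Theorem~\ref{thmkz2} then says either $\s$ has a common element or $|\aaa[\s]|$ is at most $\varepsilon|\aaa[T]|$ — the dichotomy lives in the second theorem, not in the approximation step. Finally, you underweight the remainder step: in the paper, showing $\ff'=\emptyset$ for an extremal $\ff$ requires a nontrivial probabilistic lower bound (Lemma~\ref{lemest1}) on how many partitions of $\cc^T_P$ a single ``bad'' partition $Y\notin\cc^T_P$ forces out, obtained by exposing a random permutation block by block; your one-line invocation of ``a stability estimate'' glosses over the hardest technical ingredient. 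None of these is fatal, but each needs to be filled in before the argument is complete.
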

The proof that we present for Theorem~\ref{thm1} goes almost verbatim for Theorem~\ref{thm2}, except for the more complicated notation and some slightly different estimates. Thus, we decided to omit it. We also note that in both settings the proofs give a stability result that, informally speaking, tells that any family that is within constant factor of the size of the extremal family must be very close to one of the extremal families.

\subsection{The structure of the remainder of the paper}
In order to prove the theorems, we use the spread approximation method. We need to make some improvements in the main ingredients of the method, relaxing the conditions that we impose on the families, as compared to \cite{KuZa}. For this reason, we present the proofs of the results from \cite{KuZa}, taking into account the requirements needed for this paper. The proof of the first theorem stays essentially the same, while the second theorem is simplified and has an improved bound as compared to \cite{KuZa}. This makes this paper a self-contained presentation of the version of the spread approximation technique for $t$-intersecting problems. We also note that, while working in the spread approximation framework gives us a huge lever, each of the problems poses its own difficulties that we need to overcome.

In the next section we give an outline of the proofs of our theorems. In Section~\ref{sec3}, we present the parts of the spread approximation method that we use in this paper. In Section~\ref{sec4}, we give the proofs of our main results. We note that in Section~\ref{sec4} we prove an almost sharp lower bound on the ratio of consecutive Bell numbers, as well as the lower bound for the number of partitions with parts of sizes at least $2$, which may be of some independent interest.

\section{Sketches of the proofs}\label{sec2}
As we have already mentioned, the proofs of our Theorems are based on the spread approximation method, introduced by Zakharov and the author \cite{KuZa}. The steps of the proof are summarized below.

\begin{enumerate}
  \item Reformulate the problem in terms of families of sets. The problems we deal with are for families of partitions, and the method of spread approximation is designed for families of sets. Thus, we need to give an adequate set interpretation to the problems. Theorems~\ref{thm6},~\ref{thmp1}, and~\ref{thmu1} are based on a more straightforward interpretation: each possible part in a partition is treated as a singleton, and thus partitions become sets of size at most $n$ or exactly $\ell$, depending on the question. Theorems~\ref{thm1} and~\ref{thm2} require a more complicated interpretation. There, each pair of elements from the original ground set becomes a singleton in the new ground set, and each partition is turned into a set of all pairs of singletons that lie in the same part.
  \item Prove that the set family, corresponding to the ambient family of partitions, is sufficiently spread (this is a certain quasirandomness notion that is crucial for the spread approximations method and that we introduce in the next section). This is again more or less complicated for different settings. For example, in the proof of Theorem~\ref{thm6}, in order to lower bound spreadness, we need to lower bound the ratio $B(n)/B(n-1)$.
  \item Take the extremal intersecting family $\ff$ (depending on the setting) and apply the spread approximation theorem. Get a lower-uniformity family $\s$, which encodes a family of ``partial'' partitions and that covers most of our family, as well as a small remainder $\ff'\subset \ff$ that is not covered by $\s$. The approximation is given by Theorem~\ref{thmkz1}.
  \item Show that this lower-uniformity family $\s$ is trivial for the extremal family: it consists of a single $t$-element set $T$ (corresponding to different ``partial'' partitions depending on the setting). This is done using Theorem~\ref{thmkz2}.
  \item Show that, for an extremal family $\ff$, the remainder $\ff'$ must be empty. This is more of an ad-hoc argument, which requires a significant amount of effort in some cases. In particular, for the case of $\bb(n)$, we need to lower bound the number of partitions with parts of sizes at least $2$. In the case of partially $t$-intersecting families, we need to lower bound the probability that two random partitions of the type as in Theorems~\ref{thm1},~\ref{thm2} are not partially $t$-intersecting. Such bounds allow us to say that what we ``lose'' because of having some partitions in $\ff'$ cannot be compensated by what we ``gain'' by adding sets from $\ff'$ (keeping in mind that we have upper bounds on the size of $\ff'$).
\end{enumerate}
For Theorems~\ref{thmp1} and~\ref{thmu1}, steps 3 and 5 are omitted, since these families are sufficiently spread so that we can apply Theorem~\ref{thmkz2} directly to $\ff$ instead of $\s$. Thanks to that, we get the strongest stability results in these cases. We note that Theorem~\ref{thmkz2} alone can be seen as a strengthening of one of the important parts of the Delta-system method.

In the third and fourth step we need to do some refinement of the method, proposed by Kupavskii and Zakharov. The fourth step is simplified and improved as compared to \cite{KuZa}.

\section{Spread approximations}\label{sec3}
For a set $X\subset [n]$ and families $\g,\m S\subset 2^{[n]}$ we use the following standard notation:
  \begin{align*}\g(X):=&\{A\setminus X: A\in \g, X \subset A\},\\
  \g[X]:=&\{A: A\in \g, X \subset A\},\\
  \g[\m S]:=&\bigcup_{X\in \m S} \g[X],\\
  \g(\bar X):=&\{A: A \in \g, A\cap X = \emptyset\}.\end{align*}
  We think of $\g(X),\g(\bar X)$ as of subfamilies of $2^{[n]\setminus X}.$
Let $r>1$ be some real number. We say that a family $\ff$ is {\it $r$-spread} if, for any set $X$, $|\ff(X)|\le r^{-|X|}|\ff|$. We denote by $\|\ff\|$ the average size of a set sampled from $\ff$: $\|\ff\|:=\frac 1{|\ff|}\sum_{S\in \ff}|S|$. Note that $\|\ff\|$ is at most the size of the largest set in $\ff$.
We say that $W$ is a {\it $p$-random subset} of $[n]$ if each element of $[n]$ is included in $W$ with probability $p$ and independently of others.

The following statement is a variant due to Tao \cite{Tao} of the breakthrough result that was proved by Alweiss, Lowett, Wu and Zhang \cite{Alw}.

\begin{thm}[\cite{Alw}, a sharpening due to \cite{Tao}]\label{thmtao}
  Suppose that $\delta\in (0,1)$, and that $m$ is a positive integer such that $\delta m< 1$. If for some $n,r\ge 1$ a family $\ff\subset 2^{[n]}$ is $r$-spread and $W$ is an $(m\delta)$-random subset of $[n]$, then $$\Pr[\exists F\in \ff\ :\ F\subset W]\ge 1-\Big(\frac 5{\log_2(r\delta)} \Big)^m\|\ff\|.$$
\end{thm}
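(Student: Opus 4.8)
The plan is to follow the probabilistic/encoding strategy of Alweiss--Lovett--Wu--Zhang as streamlined by Tao. First I would set up an iterative "sampling in stages" argument: instead of exposing the $(m\delta)$-random set $W$ at once, I would expose it as a union $W = W_1 \cup \dots \cup W_m$ of $m$ independent $\delta'$-random subsets, where $\delta'$ is chosen so that $1-(1-\delta')^m \le m\delta$ (so that $W$ is dominated by, or equal in distribution to, the $(m\delta)$-random set, and it suffices to bound the failure probability for $W_1\cup\dots\cup W_m$). The core of the argument is a one-step lemma: if $\ff$ is $r$-spread, then after exposing a single $\delta'$-random set $W_1$, with high probability there is a set $F_1 \in \ff$ with $|F_1 \setminus W_1|$ small, and moreover the "residual" family $\{F \setminus W_1 : F \in \ff,\ |F\cap W_1| \text{ large}\}$ can be taken to still be roughly $r$-spread (after renormalizing), so the induction on $m$ can proceed.

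The key steps, in order, would be: (1) Reduce to the $m$-stage exposure as above, so that the target becomes $\Pr[\text{no } F\in\ff \text{ with } F \subset W_1\cup\dots\cup W_m] \le (5/\log_2(r\delta'))^m \|\ff\|$, up to adjusting constants. (2) Prove the one-step "spread inheritance" lemma: given $r$-spread $\ff$ on $[n]$ and a $\delta'$-random $W_1$, define $\g_{W_1} := \{F\setminus W_1 : F\in\ff\}$ with appropriate multiplicity/weights; show that with high probability over $W_1$ there is a subfamily $\g'\subset\g_{W_1}$ which is $(r\delta'/5)$-spread (or similar) with $\|\g'\| \le \|\ff\|$ and whose "deficiency" bounds the failure probability. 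This is typically done by a weighting/potential argument: assign to each $F\in\ff$ the weight $\prod$ over matched coordinates, and use the $r$-spread condition to control the expected weight that "survives" uncovered. (3) Induct on $m$: apply the one-step lemma, pass to $\g'$ on the ground set $[n]\setminus W_1$, apply the inductive hypothesis with $m-1$ and the new spreadness parameter, and combine the two failure probabilities via a union bound, checking the constant $5$ and the $\log_2$ survive the recursion. (4) Handle the base case $m=0$ (or $m=1$) directly: if $\|\ff\| $ is small relative to $1$, the empty set or a small set is covered trivially.

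The main obstacle I expect is the one-step spread-inheritance lemma in step (2): one must simultaneously control three things after exposing $W_1$ — that the residual family is still sufficiently spread, that its average set size has not increased, and that the probability of "total failure" at this stage is at most a factor $5/\log_2(r\delta')$ times what we pay. Getting the constant to be exactly $5$ (rather than some larger absolute constant) is the delicate part, and it is precisely where Tao's refinement of the original ALWZ argument enters; the argument requires carefully choosing the threshold that separates "$F$ is mostly covered by $W_1$" from "$F$ is not", balancing the Markov-type bound on the uncovered mass against the spreadness loss. The bookkeeping of which sets move to the residual family, and ensuring $\|\g'\|\le\|\ff\|$ rather than merely $\|\g'\| \le \|\ff\| + O(\delta' n)$, is the place where a naive approach breaks and where one needs the weighted formulation.
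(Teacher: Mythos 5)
The paper does not prove Theorem~\ref{thmtao}; it is quoted as a black-box external result (the Alweiss--Lovett--Wu--Zhang sunflower bound in the refined form from Tao's blog post), so there is no in-paper proof to compare your attempt against. Your sketch of the $m$-round exposure strategy (write the $(m\delta)$-random set as a superset of $W_1\cup\cdots\cup W_m$ with each $W_i$ $\delta$-random, and iterate a one-step lemma) does correctly describe the overall architecture of the ALWZ/Tao argument, including the Markov-inequality closing step: after $m$ rounds one has a residual family $\ff_m$ with $\mathbb{E}\|\ff_m\|\le (5/\log_2(r\delta))^m\|\ff\|$, and $\Pr[\emptyset\notin\ff_m]\le \Pr[\|\ff_m\|\ge 1]\le \mathbb{E}\|\ff_m\|$.

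However, as a proof attempt there is a genuine gap: step (2), the one-step ``spread inheritance'' lemma, \emph{is} the theorem, and you only gesture at it (``assign weights, use the $r$-spread condition to control the expected uncovered mass''). Moreover the version you write down looks internally inconsistent. You state that the residual family $\g'$ should be $(r\delta'/5)$-spread with $\|\g'\|\le\|\ff\|$; but if the spreadness degrades by a factor $\delta'/5$ at every round, the per-round loss factor would worsen each round and you could not get a clean $(5/\log_2(r\delta))^m$ bound. What the actual argument needs is for the residual family to remain (essentially) $r$-spread on the smaller ground set, while it is the \emph{average set size} $\|\cdot\|$ that drops by the factor $5/\log_2(r\delta)$ in expectation each round --- the opposite split from what you wrote. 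Producing such a residual family is not a matter of taking all of $\{F\setminus W_1 : F\in\ff\}$ (which need not be spread); one needs either an explicit iterative extraction of spread pieces or Tao's entropy-compression bookkeeping. Since you explicitly flag this as the ``main obstacle'' and do not carry it out, the proposal is a correct roadmap but not a proof of the stated bound, and in particular does not establish the constant $5$ or the precise $\log_2(r\delta)$ dependence.
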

Recall that an {\it $\ell$-sunflower} is a collection of $\ell$ sets $F_1,\ldots, F_\ell$ such that for any $i\ne j$ we have $F_i\cap F_j = \cap_{i=1}^\ell F_i$. (In particular, $\ell$ pairwise disjoint sets form an $\ell$-sunflower.) The set $\cap F_i$ is called the {\it core}. The theorem implies an important strengthening on the size of the family that guarantees the existence of an $\ell$-sunflower. Namely, it implies that any family $\ff$ of $k$-sets with \begin{equation}\label{eqalweiss}|\ff|> \big(C\ell\log_2 (k\ell)\big)^k\end{equation} contains an $\ell$-sunflower, where $C$ is an absolute constant and can be taken to be $2^{10}$.

To construct spread approximations, we will need the following easy observation.
\begin{obs}\label{obs2} Given $r>1$ and a family $\ff\subset 2^{[n]}$, let $X$ be an inclusion-maximal set that satisfies $|\ff(X)|\ge r^{-|X|} |\ff|$. Then $\ff(X)$ is $r$-spread  as a family in $2^{[n]\setminus X}$.
\end{obs}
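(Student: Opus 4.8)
The plan is to argue by contradiction, exploiting the fact that the operation $\g\mapsto\g(X)$ composes nicely with itself. First I would note that a maximal $X$ exists: taking $X=\emptyset$ gives $\ff(\emptyset)=\ff$, so $|\ff(\emptyset)|=|\ff|=r^{-0}|\ff|$, and the collection of sets satisfying $|\ff(X)|\ge r^{-|X|}|\ff|$ is nonempty, hence has an inclusion-maximal element (we may assume $\ff\neq\emptyset$, otherwise the claim is vacuous). Fix such a maximal $X$. To show that $\ff(X)$ is $r$-spread in $2^{[n]\setminus X}$, I must verify $|\ff(X)(Y)|\le r^{-|Y|}|\ff(X)|$ for every $Y\subseteq[n]\setminus X$; the case $Y=\emptyset$ is an equality, so it suffices to treat $Y\neq\emptyset$, and I will suppose towards a contradiction that some nonempty $Y\subseteq[n]\setminus X$ violates this inequality.

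The only computation needed is the identity $\ff(X)(Y)=\ff(X\cup Y)$ whenever $X\cap Y=\emptyset$. I would check this by unwinding the definitions: a set $B$ lies in $\ff(X)(Y)$ if and only if $B=A\setminus(X\cup Y)$ for some $A\in\ff$ with $X\subseteq A$ and $Y\subseteq A\setminus X$; since $Y$ is disjoint from $X$, the condition $Y\subseteq A\setminus X$ is equivalent to $Y\subseteq A$, so this is exactly the condition $X\cup Y\subseteq A$, i.e. $B\in\ff(X\cup Y)$. In particular $|\ff(X)(Y)|=|\ff(X\cup Y)|$.

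Combining the two facts finishes the argument. If $Y\neq\emptyset$ witnesses the failure of $r$-spreadness of $\ff(X)$, then $|\ff(X\cup Y)|=|\ff(X)(Y)|>r^{-|Y|}|\ff(X)|\ge r^{-|Y|}\cdot r^{-|X|}|\ff|=r^{-|X\cup Y|}|\ff|$, where the middle inequality uses $|\ff(X)|\ge r^{-|X|}|\ff|$ and the last equality uses $|X\cup Y|=|X|+|Y|$ by disjointness. Thus $X\cup Y$ also satisfies the defining inequality and strictly contains $X$ (as $Y\neq\emptyset$ and $Y\cap X=\emptyset$), contradicting the maximality of $X$. Hence no such $Y$ exists, and $\ff(X)$ is $r$-spread.

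There is essentially no obstacle here: the statement is a formal consequence of the definitions, and the only point requiring care is the bookkeeping in the identity $\ff(X)(Y)=\ff(X\cup Y)$ together with the disjointness that lets the exponents add. This is precisely why it is stated as an easy observation.
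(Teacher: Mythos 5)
Your proof is correct and, apart from being phrased as a contradiction rather than a direct verification, follows exactly the same path as the paper: the paper simply writes, for any $B\supsetneq X$ of size $b$, that maximality gives $|\ff(B)|\le r^{-b}|\ff|\le r^{-b+|X|}|\ff(X)|$, which after the implicit identification $\ff(X)(Y)=\ff(X\cup Y)$ (which you spell out) yields $r$-spreadness. You have merely made explicit the small bookkeeping step that the paper leaves to the reader.
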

\begin{proof}
Indeed, by maximality, for any $B\supsetneq X$ of size $b$ we have
$$|\ff(B)|\le r^{-b} |\ff|\le r^{-b+|X|} |\ff(X)|.$$
\end{proof}
To relate size and spreadness, we use the following observation.
\begin{obs}\label{obs3}
  If for some $\alpha>1$ and $\ff\subset {[n]\choose k}$ we have $|\ff|>\alpha^k$ then $\ff$ contains an $\alpha$-spread subfamily of the form $\ff(X)$ for some set $X$ of size strictly smaller than $k$.
\end{obs}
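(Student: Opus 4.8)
The plan is to apply Observation~\ref{obs2} directly with $r=\alpha$. First I would observe that the empty set is a legitimate candidate: $|\ff(\emptyset)| = |\ff| = \alpha^{-|\emptyset|}|\ff|$, so the collection of sets $X$ with $|\ff(X)|\ge \alpha^{-|X|}|\ff|$ is nonempty. Moreover every such $X$ has $|X|\le k$, since $\ff(X)\ne\emptyset$ forces $X$ to be contained in some $k$-element member of $\ff$; hence an inclusion-maximal such $X$ exists. By Observation~\ref{obs2}, for this maximal $X$ the family $\ff(X)$ is $\alpha$-spread as a family in $2^{[n]\setminus X}$, which is exactly the kind of subfamily we want to produce.

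It then remains only to verify the uniformity bound $|X|<k$, and this is where the hypothesis $|\ff|>\alpha^k$ (strict) gets used. Suppose toward a contradiction that $|X|=k$. Since every set in $\ff$ has size exactly $k$, the family $\ff(X)=\{A\setminus X : A\in\ff,\ X\subset A\}$ is either empty or equal to $\{\emptyset\}$, so $|\ff(X)|\le 1$. On the other hand, the defining property of $X$ gives $|\ff(X)|\ge \alpha^{-|X|}|\ff| = \alpha^{-k}|\ff| > \alpha^{-k}\alpha^{k}=1$, a contradiction. Therefore $|X|<k$, and $\ff(X)$ is the required $\alpha$-spread subfamily.

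There is essentially no hard step here: the argument is a one-line application of Observation~\ref{obs2} plus a trivial case analysis. The only point requiring a little care is confirming that a maximal $X$ exists and that the strict inequality in the hypothesis is precisely what rules out the degenerate case $|X|=k$ (with a non-strict inequality $|\ff|\ge\alpha^k$ one could have $|\ff(X)|=1=\alpha^{-k}|\ff|$ with $|X|=k$, and the conclusion would fail).
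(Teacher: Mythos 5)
Your proof is correct and follows essentially the same route as the paper's: both arguments apply (the idea behind) Observation~\ref{obs2} to a maximal set $X$ satisfying $|\ff(X)|\ge\alpha^{-|X|}|\ff|$, and then rule out $|X|=k$ by noting that a $k$-uniform family has $|\ff(X)|\le 1$ when $|X|=k$, whereas the strict hypothesis forces $|\ff(X)|>1$. The only cosmetic difference is that the paper phrases it as a case split (either $\ff$ is already $\alpha$-spread, i.e.\ take $X=\emptyset$, or pick a maximal violator), while you uniformly start from the trivially admissible $X=\emptyset$; the content is identical.
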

\begin{proof}
If $\ff$ is $\alpha$-spread then we are done. Otherwise, take the largest $X$ that violates the $\alpha$-spreadness of $\ff$. Clearly, $|X|<k$ and $\ff(X)$ satisfies the requirements of the observation.
\end{proof}
We note that this observation together with Theorem~\ref{thmtao} implies bound \eqref{eqalweiss}.

The next theorem allows to construct low-uniformity approximations for sufficiently spread families.
\begin{thm}\label{thmkz1}
  Let $n,k,t\ge1$ be some integers and $\aaa\subset 2^{[n]}$ be a family. Consider a family $\ff\subset \aaa\cap {[n]\choose \le k}$ that is  $t$-intersecting. Let $q, r, r_0 \ge 1$ satisfy the following: $r>2^{12}\log_2(2k)$, $r\ge 2q$ and $r_0>r$.  Assume that $\aaa$  is $r_0$-spread.

Then there exists  a $t$-intersecting family $\s$ of sets of size at most $q$ ({\emph a spread approximation of $\ff$}) and a `remainder' $\ff'\subset \ff$ such that
\begin{itemize}
    \item[(i)] We have $\ff\setminus \ff'\subset \aaa[\s]$;
    \item[(ii)] for any $B\in \s$ there is a family $\ff_B\subset \ff$ such that $\ff_B(B)$ is $r$-spread;
    \item[(iii)] $|\ff'|\le (r_0/r)^{-q-1}|\aaa|$.
  \end{itemize}
\end{thm}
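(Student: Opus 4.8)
The plan is to build $\s$ greedily by iteratively ``peeling off'' spread subfamilies using a sunflower-type argument powered by Theorem~\ref{thmtao}. Concretely, I would run the following process. Start with $\ff_0 = \ff$. At step $i$, if $\ff_i$ is empty we stop. Otherwise, among all sets $X$ with $|\ff_i(X)| \ge r^{-|X|}|\ff_i|$, pick an inclusion-maximal one $X_i$; by Observation~\ref{obs2}, $\ff_i(X_i)$ is $r$-spread. If $|X_i| \le q$, record $X_i$ as a member of $\s$, set $\ff_{X_i} := \{F \in \ff_i : X_i \subset F\}$ (so that $\ff_{X_i}(X_i)$ is $r$-spread, giving property (ii)), remove from $\ff_i$ all sets containing $X_i$, and continue. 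If $|X_i| > q$ for every remaining choice — equivalently, once the process produces only large cores — we halt and declare the leftover family to be $\ff'$. Since each step strictly decreases $|\ff_i|$, the process terminates, and by construction every set of $\ff$ not in $\ff'$ contains some $B \in \s$, i.e.\ lies in $\aaa[\s]$ (using $\ff \subset \aaa$); this is property (i). The $t$-intersecting property of $\s$ is the delicate structural point: any two recorded cores $B, B' \in \s$ come with $r$-spread link families $\ff_B(B), \ff_{B'}(B')$, and I want to conclude $|B \cap B'| \ge t$.

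The key mechanism for $t$-intersection is the following. Suppose $B, B' \in \s$ with $|B \cap B'| < t$. Since $\ff_B(B)$ is $r$-spread and lives on a ground set of size $< n$, and $r > 2^{12}\log_2(2k) \ge 2^{12}\log_2(2k)$ with sets of size $\le k$, Theorem~\ref{thmtao} (applied with a suitable $\delta$, say $m\delta$ chosen so that $r\delta$ is a large constant, using $m$ of moderate size and $\|\ff_B(B)\| \le k$) tells us that a $p$-random subset $W$ of the ground set contains a member of $\ff_B(B)$ with probability bounded below by something like $1 - (5/\log_2(r\delta))^m k$, which is positive once $r$ exceeds the stated threshold and $m \sim \log(2k)$. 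The same holds for $\ff_{B'}(B')$. Sampling $W$ and $W'$ appropriately — or rather, using that two independent small random subsets can be made disjoint from $B \cup B'$ with good probability — I can find $F \in \ff_B$ and $F' \in \ff_{B'}$ with $(F \setminus B) \cap (F' \setminus B') = \emptyset$ and both avoiding $B \triangle B'$ in the relevant sense; then $F \cap F' \subseteq B \cap B'$, which has size $< t$, contradicting that $\ff$ is $t$-intersecting. The same argument applied with $B = B'$ but to two \emph{different} sets drawn from $\ff_B$ handles the possibility that $B$ itself has size $< t$ (forcing $|B| \ge t$, and more precisely that distinct members of $\ff_B$ intersect inside $B$). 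This is exactly where the quantitative hypothesis $r > 2^{12}\log_2(2k)$ is consumed: it guarantees the sunflower/spread-hitting probabilities beat the union bound over the $\le k$-sized sets.

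Finally, for the bound (iii) on the remainder: $\ff'$ consists of sets surviving the process, and at the moment we stopped, every $X$ with $|\ff'(X)| \ge r^{-|X|}|\ff'|$ has $|X| > q$. Applying Observation~\ref{obs3}-type reasoning, if $|\ff'|$ were larger than $(r_0/r)^{-q-1}|\aaa|$... — here I would instead argue directly: pick a maximal $X$ violating $r$-spreadness of $\ff'$ viewed inside $\aaa$; since $\aaa$ is $r_0$-spread and $\ff' \subset \aaa$, we get $|\ff'(X)| \le r_0^{-|X|}|\aaa|$, while maximality / the stopping condition forces $|X| \ge q+1$ when $\ff'$ is nonempty and "too big". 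Chasing the inequalities $|\ff'| \le r^{|X|}|\ff'(X)| \le r^{|X|} r_0^{-|X|}|\aaa| = (r_0/r)^{-|X|}|\aaa| \le (r_0/r)^{-(q+1)}|\aaa|$ (using $r_0 > r$ so the base exceeds $1$ and the exponent can be lowered to $q+1$) gives (iii). The main obstacle I anticipate is the careful probabilistic coupling in the $t$-intersection step — making precise the claim that two spread link-families admit members whose "new" parts are disjoint while avoiding the symmetric difference of the cores — together with bookkeeping the ground sets correctly after each peeling so that the spreadness constants do not degrade; the size bounds in (i) and (iii) are comparatively routine.
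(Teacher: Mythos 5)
Your overall scheme — the greedy peeling procedure producing inclusion-maximal cores $S_i$ with $r$-spread links via Observation~\ref{obs2}, the stopping rule when the core size exceeds $q$, and the remainder bound via the chain $|\ff'|\le r^{|S_m|}|\ff'(S_m)|\le r^{|S_m|}|\aaa(S_m)|\le (r/r_0)^{|S_m|}|\aaa|$ — is exactly the paper's argument, and (i), (ii), (iii) come out the same way. The part you flag as the ``main obstacle'' is indeed where your sketch has a genuine gap, and the fix is not ``two independent small random subsets,'' which would \emph{not} be disjoint with good probability once their density is $\Theta(1/\log k)$ on a large ground set. The paper's mechanism is a \emph{coupled} random $2$-partition: first pass from $\g_j=\ff^j(S_j)$ and $\g_i=\ff^i(S_i)$ to $\g_j'=\g_j(\bar S_i)$ and $\g_i'=\g_i(\bar S_j)$, which live on the common reduced ground set $[n]\setminus(S_i\cup S_j)$; use $r\ge 2q$ together with $|S_i|\le q$ to show $|\g_j'|\ge\frac12|\g_j|$ (and symmetrically), hence these restricted families are still $\frac r2$-spread. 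Then take a single uniformly random partition of $[n]\setminus(S_i\cup S_j)$ into two complementary halves $U_i,U_j$. Each half is marginally a $\frac12$-random subset, so Theorem~\ref{thmtao} with $m=\log_2(2k)$ and $\delta=(2\log_2(2k))^{-1}$ (this is where $r>2^{12}\log_2(2k)$ enters, giving $\frac r2\delta>2^{10}$) shows each of the two ``miss'' events has probability strictly less than $\frac12$; a union bound then gives a positive-probability outcome where $F_i\in\g_i'$, $F_j\in\g_j'$ sit inside disjoint halves. Because the halves are disjoint \emph{by construction} (not by chance), $(F_i\cup S_i)\cap(F_j\cup S_j)=S_i\cap S_j$, yielding the contradiction when $|S_i\cap S_j|<t$. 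Your outline would land on this if you replace ``two independent random subsets'' by ``one random $2$-coloring'' and make explicit the restriction step and the use of $r\ge 2q$ to preserve spreadness; without those two points the $t$-intersection argument does not close.
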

The crucial difference with \cite[Theorem 11]{KuZa} is that we only require $r$-spreadness from $\aaa$, instead of {\it $(r,t)$-spreadness}\footnote{$(r,t)$-spreadness means that any subfamily of the form $\aaa(T)$ with $|T|\le t$ must be $r$-spread)}. This is  crucial in our application to Theorem~\ref{thm1}, because the set interpretation of $\uu_{k,\ell}$ is an $r$-spread, but not an $(r,t)$-spread, family. The proof of this theorem is essentially the same as in \cite{KuZa}. We present it here for completeness.
\begin{proof}
The theorem is obtained using the following procedure.  For $i=1,2,\ldots $ with $\ff^1:=\ff$ we do the following steps.
\begin{enumerate}
    \item Find an inclusion-maximal set $S_i$ such that  $|\ff^i(S_i)|\ge  r^{-|S_i|}  |\ff^i|$;
    \item If $|S_i|> q$ or $\ff^i = \emptyset$ then stop. Otherwise, put $\ff^{i+1}:=\ff^i\setminus \ff^i[S_i]$.
\end{enumerate}

The family $\ff^i(S_i)$ is $r$-spread by Observation~\ref{obs2}.

Let $m$ be the step of the procedure for $\ff$ at which we stop. Put  $\s:=\{S_1,\ldots, S_{m-1}\}$. Clearly, $|S_i|\le q$ for each $i\in [m-1]$. The family $\ff_{B}$ promised in (ii) is defined to be $\ff^i[S_i]$  for $B=S_i$. Next, note that if $\ff^m$ is non-empty, then $$|\ff^m|\le r^{|S_m|} |\ff^{m}(S_m)|\le r^{|S_m|} |\aaa(S_m)|\le (r/r_0)^{|S_m|} |\aaa|,$$
where in the last inequality we used the $r_0$-spreadness of $\aaa$.
We put $\ff':=\ff^m$. Since either $|S_m|>q$ or $\ff' = \emptyset $, we have $|\ff'|\le (r_0/r)^{-q-1}|\aaa|$.

The only thing left to verify is the $t$-intersection property.
Take any (not necessarily distinct) $S_i,S_j\in \mathcal S$  and assume that $|S_i\cap S_j|<t$. Recall that  $\g_i:=\ff^i(S_i)$ and $\g_j:=\ff^j(S_j)$ are both $r$-spread.
  $$|\g_j(\bar S_i)| \ge |\g_j|-\sum_{x\in S_i\setminus S_j} \big|\g_j[\{x\}]\big|\ge \Big(1-\frac {|S_i|}{r}\Big) |\g_j|\ge \frac 12|\g_j|.$$
  In the last inequality we used that $|S_i|\le q$ and that $r\ge 2q.$ The same is valid for $\g_i(\bar S_j)$. Note that both $\g_j':=\g_j(\bar S_i)$ and $\g_i':=\g_i(\bar S_j)$ are subfamilies of $2^{[n]\setminus (S_i\cup S_j)}.$ Because of the last displayed inequality and the trivial inclusion $\g_j'(Y)\subset \g_j(Y)$, valid for any $Y$, both $\g_i'$ and $\g_j'$ are $\frac r2$-spread, where  $\frac r2 > 2^{11}\log_2(2k)$. Indeed, we have
  $$2|\g_j'|\ge |\g_j|\ge r^{|X|}|\g_j(X)|\ge r^{|X|}|\g_j'(X)|.$$


  We are about to apply Theorem~\ref{thmtao}. Let us put $m= \log_2(2k)$, $\delta = (2\log_2(2k))^{-1}$, and let $r/2$ play the role of $r$. Note that $m\delta = \frac 12$ and $\frac r2\delta > 2^{10}$ by our choice of $r$.  Theorem~\ref{thmtao} implies that a $\frac{1}{2}$-random subset $W$ of $[n]\setminus (S_i\cup S_j)$ contains a set from $\g_j'$ with probability strictly bigger than
  $$1-\Big(\frac 5{\log_2 2^{10}}\Big)^{\log_2 2k} k = 1-2^{-\log_2 2k} k = \frac 12.$$

  Consider a random partition of $[n]\setminus (S_i\cup S_j)$ into $2$ parts $U_i,U_j$, including each element with probability $1/2$ in each of the parts. Then both $U_\ell$, $\ell\in \{i,j\}$, are distributed as $W$ above. Thus, the probability that there is $F_\ell \in \g_\ell'$ such that $F_\ell\subset U_\ell$ is strictly bigger than $\frac 12$. Using the union bound, we conclude that, with positive probability, it holds that there are such $F_\ell$ with $F_\ell\subset U_\ell$ for each  $\ell \in\{i,j\}$. Fix such choices of $U_\ell$ and $F_\ell$, $\ell \in \{i,j\}$. Then, on the one hand, both $F_i\cup S_i$ and $F_j\cup S_j$ belong to $\ff$ and, on the other hand, $|(F_i\cup S_i)\cap (F_j\cup S_j)| = |S_i\cap S_j|<t$, a contradiction with $\ff$ being $t$-intersecting.
\end{proof}

An important second step is to show that the approximation family $\s$ is trivial for an extremal family. In  \cite[Theorem 12]{KuZa}, the authors worked with $(r,t)$-spread families, which we cannot afford in the proof of Theorem~\ref{thm1} for the family $\uu_{k,\ell}$, and thus have to find ways around it. Let us say that a family $\aaa\subset 2^{[n]}$ is {\it weakly $(r,t)$-spread} if there exists a set $T\subset [n]$ of size $t$ such that for any nonnegative integer $s$ and a set $U\subset [n]$ of size $t+s$ we have $|\aaa(U)|\le r^{-s}|\aaa(T)|$. Informally speaking, this is akin to $r$-spreadness for the family $\aaa(T)$, where $T$ of size $t$ is chosen so that $|\aaa(T)|$ is maximal.

 Recall that a $t$-intersecting family $\s$ is {\it non-trivial } if $|\cap_{F\in \s} F|<t.$

\begin{thm}\label{thmkz2}
Let $\varepsilon\in (0,1]$, $n,r,q, t \ge 1$  be such that $\varepsilon r\ge 24 q$.
Let $\aaa \subset 2^{[n]}$ be a weakly $(r, t)$-spread family and let $\s \subset {[n] \choose \le q}$ be a non-trivial $t$-intersecting family.
Then there exists a $t$-element set $T$ such that $|\aaa[\s]| \le \varepsilon |\aaa[T]|$.
\end{thm}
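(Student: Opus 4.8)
The plan is to show that if $\s$ is non-trivial, then it is ``spread out'' enough that every set $B\in\s$ avoiding the distinguished $t$-set $T$ from the weak $(r,t)$-spreadness contributes only a tiny fraction of $\aaa[T]$, and the total contribution of all such sets cannot accumulate to more than $\varepsilon|\aaa[T]|$. First I would fix $T$ to be the $t$-element set guaranteed by the definition of weak $(r,t)$-spreadness, so that $|\aaa(U)|\le r^{-|U|+t}|\aaa(T)| = r^{-|U|+t}|\aaa[T]|$ for every $U\supseteq T$ (and more generally for $U$ of size $\ge t$, reading $s=|U|-t$). The goal is then the bound $|\aaa[\s]|\le\varepsilon|\aaa[T]|$.

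The key combinatorial input is that a non-trivial $t$-intersecting family $\s\subset\binom{[n]}{\le q}$ cannot have all its members passing through a common $t$-set. Concretely, I would argue that $\s$ can be covered by a bounded number of ``stars'': pick any $B_1\in\s$; since $\s$ is $t$-intersecting, every other member meets $B_1$ in $\ge t$ elements, hence contains one of the at most $\binom{q}{t}$ many $t$-subsets of $B_1$. So $\s=\bigcup_{i} \s_i$ where each $\s_i$ is the subfamily of members containing a fixed $t$-set $T_i\subset B_1$, and there are at most $\binom{q}{t}$ values of $i$. Since $\s$ is non-trivial, no single $T_i$ is contained in all members, but that is not quite what I need; rather I need to bound $|\aaa[\s_i]|$ for each $i$ and sum. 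For a fixed $i$: every $B\in\s_i$ satisfies $B\supseteq T_i$ and $|B|\le q$, and, crucially, since $\s$ is non-trivial there is some $B'\in\s$ with $T_i\not\subseteq B'$, forcing $B$ to also contain $\ge t$ elements of $B'$; iterating this observation a bounded number of times (at most $t$ steps, because each step that fails to pin $B$ down to a common $t$-core exhibits a new element) shows that every $B\in\s_i$ contains one of boundedly many sets $U$ with $|U|\ge t+1$ — OR $\s_i$ itself has a common core of size $\ge t+1$, in which case $|\aaa[\s_i]|\le |\aaa(U)|\le r^{-1}|\aaa[T]|$ directly. Either way, each $B\in\s_i$ is ``charged'' to some $U$ with $|U|\ge t+1$, $|U|\le q$, and $|\aaa(U)|\le r^{-(|U|-t)}|\aaa[T]|\le r^{-1}|\aaa[T]|$; moreover the number of such $U$ we need is at most, say, $2^q$ (all subsets of the few ``pinning'' sets).

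Putting it together: $|\aaa[\s]|\le\sum_U |\aaa(U)|$ over the bounded collection of relevant $U$'s, each of size at least $t+1$, hence
$$|\aaa[\s]|\le N\cdot r^{-1}|\aaa[T]|,$$
where $N$ is a crude count of the $U$'s — something like $\binom{q}{t}\cdot 2^q$ or, with a more careful argument peeling one element at a time, only polynomially many in $q$ (this is where the $24q$ in the hypothesis $\varepsilon r\ge 24q$ must come from, so the counting has to be done sharply — plausibly $\s$ is covered by at most $q+1$ sets each of size $\le q+t$, giving $N\le$ roughly $q\cdot q = O(q^2)$, or even $N=O(q)$ with the right grouping). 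Then $\varepsilon r\ge 24q \ge N$ gives $|\aaa[\s]|\le\varepsilon|\aaa[T]|$.

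The main obstacle I anticipate is the second step: getting the count $N$ down to $O(q)$ so that the hypothesis $\varepsilon r\ge 24q$ (linear in $q$, not $2^q$ or $q^t$) suffices. The naive covering-by-$t$-subsets-of-$B_1$ argument loses a factor $\binom{q}{t}$, which is far too big. I expect one needs a smarter decomposition of a non-trivial $t$-intersecting family: something like choosing two members $B_1,B_2\in\s$ with $|B_1\cap B_2|$ as small as possible (so $t\le|B_1\cap B_2|\le 2q$, but actually this intersection has size $\ge t$ and we want to use it as a ``core candidate''), then noting every other member is forced to contain $\ge t$ elements from within $B_1\cup B_2$, a set of size $\le 2q$, and — because $\s$ is non-trivial — cannot contain $B_1\cap B_2$ entirely in a way that trivializes things. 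A clean version: every $B\in\s$ contains a $t$-subset of $B_1$; group by which element of $B_1\setminus(B_1\cap B_2)$ (or of a small ``fingerprint'' set) the member picks up; since non-triviality forbids a common $t$-core, the relevant fingerprints live in a set of size $O(q)$, yielding $N=O(q)$. Making this precise — identifying the right pair of sets to peel against and showing non-triviality really does cap the number of stars at $O(q)$ rather than $O(q^t)$ — is the crux of the argument; everything after that is just summing a geometric-type bound against the weak spreadness inequality.
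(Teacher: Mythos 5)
Your proposal correctly identifies the target inequality and the role of the weak $(r,t)$-spreadness (each $(t+s)$-set $U$ gives $|\aaa[U]|\le r^{-s}|\aaa[T]|$), but it does not contain a proof: the decomposition you settle on (cover $\s$ by stars through the $\binom{q}{t}$ $t$-subsets of a fixed $B_1$) loses a factor of order $\binom{q}{t}$, which you yourself note is fatal given a hypothesis that is only linear in $q$; and the proposed fix --- showing that a non-trivial $t$-intersecting family can be covered by $O(q)$ supersets of size $\ge t+1$, each buying a factor $r^{-1}$ --- is left as a conjecture with only a heuristic sketch. That conjecture is the whole theorem. There is no reason offered why only $O(q)$ such ``fingerprint'' sets suffice, and in fact the cover-by-few-$(t+1)$-sets picture is the wrong shape for this statement.

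The paper's argument is structurally different and is worth contrasting. Instead of covering $\s$ once by a small collection of $(t+1)$-sets, it first replaces $\s$ by a \emph{minimal} $t$-intersecting family $\T_0$ (Observation~\ref{obs22}: repeatedly shrink members to proper subsets while preserving $t$-intersection and coverage). It then peels $\T_0$ layer by layer by uniformity: $\W_i$ is the set of largest members of $\T_i$, of size $q-i$, and $\T_{i+1}$ is obtained by dropping $\W_i$ and re-minimizing. Minimality forces a structural fact (Lemma~\ref{lemkeyred}(iii)): no subfamily of $\T_i$ can be ``too spread'', which via Observation~\ref{obs3} yields the counting bound $|\W_i|\le (6(q-i))^{q-i-t}$. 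Crucially this count at layer $i$ is \emph{not} $O(q)$ --- it can be as large as $(6q)^{q-i-t}$ --- but each set $W\in\W_i$ has size $q-i$, so $|\aaa[W]|\le r^{-(q-i-t)}|\aaa[T]|$, and the two exponentials cancel to a geometric term $(6q/r)^{q-i-t}$. Summing the geometric series and handling the final collapse to a single $t$-set via a covering-number argument (Lemma~\ref{lemkeyred}(v), contributing $q/r$) is exactly where the hypothesis $\varepsilon r\ge 24q$ is used. So the paper does not need, and does not prove, any $O(q)$ bound on the number of ``stars''; it matches the number of sets at each uniformity level against the spreadness decay at that level. Your proposal, as written, would require a separate (and I believe false in general) combinatorial lemma, and so has a genuine gap that the layered/minimization argument is specifically designed to avoid.
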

The proof again follows closely the proof of  \cite[Theorem 12]{KuZa}, with a few changes. Again, we present it here  in full for completeness. 
 In what follows, assume that $T$ is a set of size $t$ that maximizes $|\aaa(T)|$.
For the proof, we will need the following simple observation.

\begin{obs}\label{obs22}
For any positive integers $n,p$, a family $\aaa\subset 2^{[n]}$ and a $t$-intersecting family $\s \subset {[n] \choose \le p}$ there exists a $t$-intersecting family $\T \subset {[n] \choose \le p}$ such that $\aaa[\s] \subset \aaa[\T]$ and for any $T \in \T$ and any proper subset $X \subsetneq T$ there exists $T' \in \T$ such that $|X \cap T'| < t$.
\end{obs}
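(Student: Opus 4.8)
The plan is to obtain $\T$ from $\s$ by greedily shrinking sets for as long as this can be done without destroying the $t$-intersecting property. Formally, I would initialize $\T := \s$ and repeat the following step: if there is a set $T \in \T$ and a \emph{proper} subset $X \subsetneq T$ such that $|X \cap T'| \ge t$ for every $T' \in \T$, then replace $\T$ by $(\T \setminus \{T\}) \cup \{X\}$. When no such step is available, output the current family $\T$.

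First I would argue termination: each step strictly decreases the nonnegative integer $\sum_{T \in \T} |T|$, since we delete $T$ and insert a proper subset $X$ of it, so the sum drops by at least $|T|-|X|\ge 1$ (and by more if $X$ already belonged to $\T$). Hence the process halts after finitely many steps, and at that point the absence of an available step is precisely the required minimality property: for every $T \in \T$ and every $X \subsetneq T$ there is some $T' \in \T$ with $|X \cap T'| < t$. (For $X = \emptyset$ this is automatic, as $t \ge 1$ and $\T$ contains $T$, hence is nonempty.)

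It then remains to check the other two properties, which I would do by showing they are invariants preserved at every step. For the $t$-intersecting property: in a replacement step the chosen $X$ satisfies $|X \cap T'| \ge t$ for all $T' \in \T$; taking $T' = T$ and using $X \subseteq T$ yields $|X| \ge t$, so $X$ meets itself and every member of $\T \setminus \{T\}$ in at least $t$ elements, while the members of $\T \setminus \{T\}$ pairwise $t$-intersect by the inductive hypothesis; hence $(\T \setminus \{T\}) \cup \{X\}$ is again $t$-intersecting. For the containment $\aaa[\s] \subseteq \aaa[\T]$: whenever $X \subseteq T$, any $A \in \aaa$ with $T \subseteq A$ also satisfies $X \subseteq A$, so $\aaa[T] \subseteq \aaa[X]$ and therefore $\aaa[\T] \subseteq \aaa[(\T \setminus \{T\}) \cup \{X\}]$; since $\T = \s$ at the start, $\aaa[\s] \subseteq \aaa[\T]$ holds throughout. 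Finally, every set of $\T$ is a subset of some set of $\s$, so $\T \subseteq {[n]\choose \le p}$.

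I do not expect a genuine obstacle here: this is a soft, purely set-theoretic statement. The one point worth flagging is the deduction $|X| = |X\cap T| \ge t$ from the fact that the shrunk set $X$ is required to $t$-intersect the very set $T \in \T$ it replaces — this is exactly what makes the swap preserve $t$-intersection. The other thing to keep in mind is that the operator $\aaa[\cdot]$ is monotone under passing to subsets of the approximating sets, which is what guarantees that shrinking never loses any of the coverage of $\aaa$.
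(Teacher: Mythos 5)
Your proof is correct and is exactly the approach the paper sketches in its one-line remark (repeatedly replace sets of $\s$ by proper subsets while preserving the $t$-intersecting property); you have simply supplied the straightforward termination and invariant checks that the paper leaves implicit.
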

One natural way to choose such $\T$ is to repeatedly replace sets in $\s$ by their proper subsets while preserving the $t$-intersecting property.

In terms of Theorem~\ref{thmkz2}, let us iteratively define the following series of families.
\begin{enumerate}
    \item Let $\T_0$ be a family given by Observation~\ref{obs22} when applied to $\aaa$ and $\s$ with $p= q$.
    \item For $i = 0, \ldots, q-t$ we put $\W_i = \T_i \cap {[n] \choose q-i}$ and let $\T_{i+1}$ be the family given by Observation~\ref{obs22} when applied to the families $\aaa$ (playing the role of $\aaa$) and  $\T_{i}\setminus \W_{i}$ playing the role of $\mathcal S$ with $p = q-i-1$.
\end{enumerate}
Remark that $\T_i$ is $t$-intersecting for each $i=0,\ldots, q-t$ by definition. We summarize the properties of these series of families in the following lemma.

\begin{lem}\label{lemkeyred} The following properties hold for each $i = 0,\ldots, q-t$. 
\begin{itemize}
  \item[(i)] All sets  in $\T_i$ have size at most $q-i$.
  \item[(ii)] We have $\aaa[\T_{i-1}]\subset \aaa[\T_{i}]\cup \aaa[\W_{i-1}]$.
  \item[(iii)] The family $\T_i$ does not have a subfamily $\mathcal Y$ and a set $X$ such that $\mathcal Y(X)$ satisfies $|\mathcal Y(X)|>1$ and is $>(q-i-t+1)$-spread.
  \item[(iv)] We have $|\W_i|\le (6(q-i))^{q-i-t}$.
  \item[(v)] If $\T_i$ consists of a single $t$-element set $X$ and this is not the case for $\T_{i-1}$ then $|\aaa[\T_{i-1}\setminus \W_{i-1}]|\le \frac{q}r |\aaa[T]|$.
\end{itemize}
\end{lem}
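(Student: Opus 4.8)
\textbf{Proof plan for Lemma~\ref{lemkeyred}.}

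The plan is to verify the five properties essentially in order, as each one feeds into the next. Property (i) is immediate from the construction: by definition $\T_{i+1}$ is obtained from Observation~\ref{obs22} applied with $p = q-i-1$, so all its sets have size at most $q-i-1 = q-(i+1)$, and $\T_0$ has sets of size at most $q$; one just inducts on $i$. Property (ii) is also nearly definitional: $\T_i$ is produced by Observation~\ref{obs22} from $\T_{i-1}\setminus\W_{i-1}$, so $\aaa[\T_{i-1}\setminus\W_{i-1}]\subset\aaa[\T_i]$, and since $\T_{i-1}=(\T_{i-1}\setminus\W_{i-1})\cup\W_{i-1}$ we get $\aaa[\T_{i-1}]\subset\aaa[\T_i]\cup\aaa[\W_{i-1}]$.

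For property (iii), I would argue by contradiction using the ``minimality'' guaranteed by Observation~\ref{obs22}: if $\mathcal Y\subset\T_i$ and a set $X$ are such that $\mathcal Y(X)$ has more than one element and is $>(q-i-t+1)$-spread, then I want to produce a set in $\T_i$ that has a proper subset $X'$ violating the conclusion of Observation~\ref{obs22}, i.e. a proper subset meeting every set of $\T_i$ in $\ge t$ elements. The point is that a highly spread family on more than one set, via Theorem~\ref{thmtao} applied with the appropriate choice of parameters (uniformity $q-i$, so the relevant spreadness threshold is around $q-i-t+1$), lets one find two members of $\mathcal Y$ that intersect only in $X$; combined with $t$-intersection of $\T_i$ this forces $|X|\ge t$, and then $X$ (or a $t$-subset of it) is a proper subset of any set of $\mathcal Y$ strictly containing it that meets all of $\T_i$ in $\ge t$ elements — contradicting the defining property of $\T_i$ from Observation~\ref{obs22}. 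This is the step I expect to be the main obstacle, since it requires setting up the probabilistic argument carefully and matching the spreadness parameter to the uniformity bound $q-i$ from part (i); it is morally the same sunflower-extraction argument as in the proof of Theorem~\ref{thmkz1}, transplanted to this reduced setting.

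For property (iv): every set in $\W_i$ has size exactly $q-i$, and by (iii) the family $\W_i$ (as a subfamily of $\T_i$) contains no $X$ with $\W_i(X)$ having more than one element and $>(q-i-t+1)$-spread; equivalently, $\W_i$ itself, as a family of $(q-i)$-sets, cannot be too large. Applying Observation~\ref{obs3} with $\alpha = q-i-t+1$ (rounded appropriately to an integer $\ge$ that value, say $\alpha$ a bit larger to absorb constants) — or rather its contrapositive — if $|\W_i| > \alpha^{q-i-t+1}\cdot(\text{something})$ then $\W_i$ would contain an $\alpha$-spread subfamily $\W_i(X)$ on more than one set, contradicting (iii). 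Choosing the numbers so that the bound reads $(6(q-i))^{q-i-t}$ is then a routine computation: one has $q-i-t+1 \le q-i$ for the exponent and the base $6(q-i)$ is comfortably larger than $q-i-t+1$, giving the stated inequality with room to spare. (One should double-check the edge case $q-i = t$, where $\W_i$ consists of $t$-sets and the bound $(6t)^0 = 1$ forces $|\W_i|\le 1$, consistent with $t$-intersection of $t$-sets pinning down a unique set.)

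For property (v): suppose $\T_i = \{X\}$ with $|X| = t$ but $\T_{i-1}$ is not a single $t$-set. The family $\T_i$ is obtained from $\T_{i-1}\setminus\W_{i-1}$ by the subset-replacement of Observation~\ref{obs22}, and $\aaa[\T_{i-1}\setminus\W_{i-1}]\subset\aaa[\T_i] = \aaa[X]$. So every member of $\T_{i-1}\setminus\W_{i-1}$ contains $X$ as a subset after the replacements collapse — more carefully, each $Y\in\T_{i-1}\setminus\W_{i-1}$ was replaced by a subset ending up inside (or equal to) $X$, so $\aaa[\T_{i-1}\setminus\W_{i-1}] = \bigcup_Y \aaa[Y]$ where each $Y\supseteq$ (its replacement) $\subseteq X$; in particular the sets $Y$ all strictly contain... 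I would instead argue directly: $\aaa[\T_{i-1}\setminus\W_{i-1}]\subseteq\aaa[X]$, and if the inclusion were ``most of $\aaa[X]$'' then $X$ itself would already be forced, contradicting that $\T_{i-1}$ is not a single $t$-set; quantitatively, since $\T_{i-1}\setminus\W_{i-1}$ has no set equal to $X$ (else $\T_{i-1}$ would already be essentially pinned), every $Y\in\T_{i-1}\setminus\W_{i-1}$ has $|Y|\ge t+1$ and $Y\supsetneq$... hmm. The clean route: each $Y \in \T_{i-1}\setminus\W_{i-1}$ satisfies $X\subsetneq Y$ is false in general, so instead use that $\aaa[Y]\subseteq\aaa[X]$ forces $X\subseteq Y$ only if we track the replacement; taking $Y$ to be a set whose replacement lies in $X$ but $Y\ne$ that replacement is automatic. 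Then $|\aaa[\T_{i-1}\setminus\W_{i-1}]| \le \sum_{j} |\aaa[X\cup\{x_j\}]|$ over the $\le q$ possible extra elements, and by weak $(r,t)$-spreadness each $|\aaa[X\cup\{x_j\}]| \le r^{-1}|\aaa[T]|$ (using $|X| = t$ and $|X\cup\{x_j\}| = t+1$), so the total is at most $(q/r)|\aaa[T]|$, which is (v). I would write this last part carefully, making explicit via Observation~\ref{obs22} why each relevant $Y$ genuinely gives a proper superset of $X$ and hence an extra element to sum over.
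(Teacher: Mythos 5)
Your (i) and (ii) match the paper and are fine. The trouble starts at (iii), and the gaps there propagate.

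For (iii), you propose to invoke Theorem~\ref{thmtao} to extract two members of $\mathcal Y$ meeting exactly in $X$. This fails on two counts. First, the spreadness parameter here is only $q-i-t+1$, which can be as small as $2$ (take $t=q-i-1$); Theorem~\ref{thmtao} needs spreadness on the order of $2^{10}\log_2$(uniformity), so the probabilistic machinery simply does not apply. Second, and more fundamentally, even if you found $Y_1,Y_2\in\mathcal Y$ with $Y_1\cap Y_2=X$, this would only give $|X|\ge t$ from $t$-intersection of $\T_i$; it would \emph{not} give the statement you actually need, namely that \emph{every} $T'\in\T_i$ satisfies $|T'\cap X|\ge t$ (so that $X$ witnesses a violation of the Observation~\ref{obs22} property). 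The paper's argument is elementary counting, not probabilistic: if some $T'\in\T_i$ has $|T'\cap X|=t-j$ with $j>0$, then every set of $\mathcal Y(X)$ must meet $T'\setminus X$ in $\ge j$ elements, but by $>(q-i-t+1)$-spreadness and the bound $|T'\setminus X|\le q-i-t+j$ there are strictly fewer than $|\mathcal Y(X)|$ sets with that property — a contradiction that forces $j=0$.

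For (iv), you apply Observation~\ref{obs3} directly to the $(q-i)$-uniform family $\W_i$; this gives the bound $\alpha^{q-i}$ with $\alpha=q-i-t+1$, not $\alpha^{q-i-t+1}$ as you wrote, and $\alpha^{q-i}$ can greatly exceed $(6(q-i))^{q-i-t}$ (already for $q-i=100$, $t=50$: $51^{100}>600^{50}$). The missing idea is a shadowing step: since $\T_i$ is $t$-intersecting, a fixed $Y\in\W_i$ meets every $W\in\W_i$ in $\ge t$ elements, so by pigeonhole over the ${q-i\choose t}$ $t$-subsets $X\subset Y$ there is one with $|\W_i|\le{q-i\choose t}|\W_i(X)|$; \emph{then} Observation~\ref{obs3} applies to the $(q-i-t)$-uniform $\W_i(X)$ (combined with (iii)) to give $|\W_i(X)|\le(q-i-t+1)^{q-i-t}$, and the two factors multiply out to the stated bound.

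For (v), the phrase ``over the $\le q$ possible extra elements'' is doing all the work and is not justified; sets in $\T':=\T_{i-1}\setminus\W_{i-1}$ need not have the form $X\cup\{x\}$. The actual argument is about the covering number of $\T'(X)$: one shows $\tau(\T'(X))\le q$, because otherwise every $W\in\W_{i-1}$ would either contain $X$ or have to hit all of $\T'(X)$ (forcing $|W|\ge\tau(\T'(X))>q$, impossible), so all of $\T_{i-1}$ would contain $X$, contradicting the defining property of $\T_{i-1}$ from Observation~\ref{obs22}. Taking a cover $\{x_1,\ldots,x_q\}$ of $\T'(X)$ then yields $\aaa[\T']\subset\bigcup_j\aaa[X\cup\{x_j\}]$, and the weak $(r,t)$-spreadness bound $|\aaa[X\cup\{x_j\}]|\le r^{-1}|\aaa[T]|$ finishes. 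Your final displayed inequality is right but the reduction to it is the whole content of (v).
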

The lemma is similar \cite[Lemma 14]{KuZa}, with two major differences. First, we replace the use of sunflowers by the use of spread families, which improves the bound in (iv). Second, in (v), where we replace a concrete $X$ with a ``universal'' $T$ on the right-hand side.
\begin{proof}
(i) This easily follows by induction on $i$ from the fact that all sets in $\s$ have size at most $q$ and the definition of $\T_i$.

(ii) We have $\aaa[\T_{i-1}] = \aaa[\T_{i-1} \setminus \W_{i-1}] \cup \aaa[\W_{i-1}]$ and, by the definition of $\T_{i}$, we have $\aaa[\T_i]\supset \aaa[\T_{i-1}\setminus \W_{i-1}]$.

(iii) Assume such $\mathcal Y$ and $X$ exist. 
Assume that a set $T'\in \T_i$ intersects $X$ in $t-j$ elements, $j>0$. Then $T'$ intersects each set in $\mathcal Y(X)$ in at least $j$ elements. For any set $Z$ disjoint with $X$ we have $|\mathcal Y(X\cup Z)|< (q-i-t+1)^{-|Z|}|\mathcal Y|$. There are, however, only ${T'\setminus X\choose j}\le {q-i-t+j\choose j}$ subsets of $T'\setminus X$ of size $j$, and thus the part of $\mathcal Y(X)$ that intersects $T'$ in at least $j$ elements has size strictly smaller than
$${q-i-t+j\choose j}(q-i-t+1)^{-j}|\mathcal Y|= (q-i-t+1)^{-j}\prod_{a=1}^j\frac{q-i-t+a}{a}|\mathcal Y|\le |\mathcal Y|.$$
This is a contradiction, and thus $j\le 0$. But then this contradicts the minimality of $\mathcal T_i$: we could have replaced $\mathcal T_i(X)$ with $X$.

(iv) This is trivial for $i = q-t$ since $\T_{q-t}$ contains at most $1$ set. In what follows, we assume that $i<q-t$. Take any set $Y\in \W_i$. Since $\T_i$ is $t$-intersecting, there is a $t$-element subset $X\subset Y$ such that $|\W_i| \le {q-i \choose t} |\W_i(X)|={q-i \choose q-i-t} |\W_i(X)|$. Next, $\W_i(X)$ is $(q-i-t)$-unform and does not contain a subfamily $\mathcal Y$ and a set $X$ such that  $\mathcal Y(X)$ is $>(q-i-t+1)$-spread. Using Observation~\ref{obs3}, we conclude that $|\W_i(X)|\le (q-i-t+1)^{q-i-t}$. 
\begin{align*}
|\W_i| \le& {q-i \choose q-i-t} (q-i-t+1)^{q-i-t}\\ 
\le& \Big(\frac{e(q-i)}{q-i-t}\Big)^{q-i-t}(2(q-i-t))^{q-i-t} \\
\le& 6(q-i)^{q-i-t} 
\end{align*}

(v) Let us assume that $\T_i = \{X\}$ for some $t$-element set $X$. Note that all sets in $\T_{i-1}$ have size at least $t+1$. Otherwise, if there is $X'\in \T_{i-1}$ of size $t$ then $X'$ is a proper subset of all other sets from $\T_{i-1},$ which contradicts the property of $\T_{i-1}$ guaranteed by Observation~\ref{obs22}. Thus, the sets in $\T':=\T_{i-1}\setminus \W_{i-1}$, if any, have size at least $t+1$ and all contain $X$. Recall that, for a family $\ff$, {\it the covering number} $\tau(\ff)$ is the size of the smallest cover: a set $Y$ such that $Y\cap F\ne \emptyset$ for each $F\in \ff.$ Assume that $\tau(\T'(X))> q$. Each set in $\W_{i-1}$ either contains $X$ or intersects every set from $\T'(X)$. In the latter case, it has size at least $\tau(\T'(X))$, which is impossible because each set in $W_{i-1}$ has size at most $q$. Thus, all sets from $\W_{i-1}$ contain $X$, implying that all sets from $\m T_{i-1}$ contain $X$, a contradiction. Therefore, $\tau(\T'(X))\le q$. Recall that $T, |T|=t$, is such that $|\aaa(T)|$ is maximal. If $\{x_1, \ldots, x_q\}$ is a covering of $\T'(X)$ then we have
$$
|\aaa[\T']| \le |\aaa[X \cup \{x_1\}]| + \ldots + |\aaa[X\cup \{x_q\}]| \le \frac qr |\aaa[T]|,
$$
where in the last inequality we used the definition of $T$ and the weak $(r,t)$-spreadness.
\end{proof}

\begin{proof}[Proof of Theorem~\ref{thmkz2}] 
Fix $i$ as in Lemma~\ref{lemkeyred} (v). Note that by (i) such a choice always exists. Let $T$ be a $t$-element set such that $|\aaa[T]|$ is maximal. By the weak $(r, t)$-spreadness, for any $j < i$ and any $W \in \W_j$ we have $|\aaa[W]| \le r^{-(q-j - t)} |\aaa[T]|$. By (iv) and the union bound, we get $|\aaa[\W_j]| \le r^{-(q-j-t)} (6q)^{(q-j-t)} |\aaa[T]|$. Using this and (v) we obtain
\begin{align*}
|\aaa[\s]|\overset{(ii)}{\le} |\aaa[\T_{i-1}]|+\sum_{j=0}^{i-1}|\aaa[\W_j]|\overset{(iv),(v)}{\le}& \Big(\frac q r +\sum_{j=1}^{\infty} r^{-j}(6q)^{j} \Big)|\aaa[T]|\\ \le  \ \ &\Big(\frac \epsilon 2 +\sum_{j=1}^{\infty} \big(\frac \epsilon4\big)^j \Big)|\aaa[T]|\le \varepsilon |\aaa[T]|,
\end{align*}
where in the third inequality we used the condition on $r$ and the bound on $C_0$.
\end{proof}

\section{Proofs}\label{sec4}

\subsection{General partitions. Proof of Theorem~\ref{thm6}}

\begin{lem}\label{lemb1}
  For any $n\ge 2$ we have $\frac{B_{n+1}}{B_{n}}\ge \frac{n}{2\log_e n}$.
\end{lem}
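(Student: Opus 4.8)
The plan is to get a good lower bound on the ratio $B_{n+1}/B_n$ using the recurrence $B_{n+1}=\sum_{j=0}^{n}\binom nj B_j$. The cleanest way is to use the probabilistic interpretation: if $X_n$ denotes the number of blocks in a uniformly random partition of $[n]$, then Dobiński-type identities give $B_{n+1}/B_n = \mathbb E[X_n+1]$ (indeed $B_{n+1}/B_n$ equals the expected value of $1+Y$ where $Y$ is the size of the block containing a fixed element in a slightly different model — one should pick whichever identity is most convenient). Alternatively, and perhaps more elementary, note that $B_{n+1}\ge \binom{n}{m}B_{n-m}$ for every $m$, so it suffices to find $m$ with $\binom nm \cdot \frac{B_{n-m}}{B_n}$ large; combined with a crude \emph{upper} bound on the ratio $B_n/B_{n-m}$ this yields the claim.

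Concretely, I would first establish a crude upper bound $B_{n}/B_{n-1}\le n$ for all $n$ (immediate from $B_n=\sum_j \binom{n-1}{j}B_j \le B_{n-1}\sum_j\binom{n-1}{j}\cdot\frac{B_j}{B_{n-1}}$, or more simply by induction since every partition of $[n]$ is obtained from a partition of $[n-1]$ by inserting $n$ into one of at most $n$ places). Iterating, $B_n/B_{n-m}\le n^m$. Then pick $m=\lfloor n/(2\log_e n)\rfloor$ and use
$$\frac{B_{n+1}}{B_n}\ \ge\ \binom nm\cdot\frac{B_{n-m}}{B_n}\ \ge\ \binom nm n^{-m}\ \ge\ \Big(\frac{n}{em}\Big)^m n^{-m}\cdot\big(\text{lower-order}\big),$$
which is not quite strong enough with the trivial $\binom nm\ge (n/m)^m$ discarding the $e^{-m}$; so instead I would use the sharper $\binom nm\ge (n/m)^m/\!\sqrt{2\pi m}\cdot e^{-m^2/n+O(\cdot)}$ or, better, avoid binomials entirely.

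The approach I actually expect to carry through: use the identity $B_{n+1}/B_n = \mathbb E[1+Z_n]$ where $Z_n$ is the size of the block containing element $n+1$ conditioned appropriately, equivalently bound things via the known asymptotic $B_{n+1}/B_n\sim n/\log n$ but we need an honest inequality for all $n\ge2$. So I will instead proceed by a direct induction: set $R_n:=B_{n+1}/B_n$ and show $R_n\ge n/(2\log_e n)$ by checking small cases by hand and then using the recurrence $B_{n+1}=B_n+\sum_{j=1}^{n}\binom n{j}B_{n-j}$ rearranged as $R_n = 1+\sum_{j=1}^n \binom{n}{j}\frac{B_{n-j}}{B_n} = 1+\sum_{j=1}^{n}\binom nj \prod_{i=0}^{j-1}R_{n-1-i}^{-1}$, keeping only the $j=1$ term $n/R_{n-1}$ and a tail estimate, then feeding in the upper bound $R_{n-1}\le n-1$. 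The main obstacle is ensuring the constant $2$ (rather than something slightly larger) survives for \emph{all} $n\ge2$, not just asymptotically; this forces the argument to keep the $j=1$ term together with at least the $j=2$ term and to verify the base cases $n=2,\dots,n_0$ numerically so that the induction has enough room.
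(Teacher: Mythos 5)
Your proposal sketches three possible routes but never closes any of them, and—importantly—none of them is the route the paper actually takes. The paper's proof starts from Dobiński's formula $B_n=\frac1e\sum_{s\ge0}\frac{s^n}{s!}$, shows that the ratio of consecutive terms $\frac{(s-1)^n/(s-1)!}{s^n/s!}=s(1-\tfrac1s)^n\le se^{-n/s}<e^{-1}$ for all $s\le s_0(n)$ where $s_0(n)\ge n/\log_e n$, deduces $B_n\le\frac{e}{e-1}\sum_{s\ge s_0}\frac{s^n}{s!}$, and then divides to get $B_{n+1}/B_n\ge\frac{(e-1)s_0}{e}\ge\frac{n}{2\log_e n}$ for $n\ge16$, handling $n\in[2,15]$ by hand. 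You do not invoke Dobiński at all.

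There is also a genuine quantitative gap in every branch you propose. Your ``single-term'' route uses $B_{n+1}\ge\binom nm B_{n-m}$ together with the crude upper bound $B_n/B_{n-1}\le n$ iterated to $B_n/B_{n-m}\le n^m$; but this upper bound is loose by a factor of order $\log n$ per step, and iterating $m\approx n/\log n$ times compounds the loss into a factor of roughly $(\log n)^m$, which is astronomically larger than the $n/\log n$ you are trying to prove. Concretely $\binom nm n^{-m}\le 1/m!$, so this path can never produce a bound growing with $n$; no Stirling sharpening of $\binom nm$ fixes an error that sits in $B_n/B_{n-m}$, not in the binomial. Your induction route has the same flaw: from $R_n=1+\sum_{j\ge1}\binom nj\prod_{i=1}^{j}R_{n-i}^{-1}$, feeding in $R_{n-i}\le n-i$ (which is what you propose) gives at best $R_n\gtrsim\sum_j \binom nj/\prod_{i=1}^j(n-i)\approx\sum_j 1/j!\approx e$, i.e.\ only a constant lower bound, even if you keep \emph{all} terms rather than just $j=1,2$. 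To get a $\log$-factor improvement you would need an upper bound of the form $R_{n-i}\lesssim(n-i)/\log(n-i)$ to feed into the recurrence, which is essentially the statement you are trying to prove and is not supplied. Finally, the probabilistic identity $B_{n+1}/B_n=1+\mathbb E[X_n]$ (number of blocks) is correct, but you give no mechanism to lower bound $\mathbb E[X_n]$, so it does not advance the argument. In short: the proposal identifies the right target but lacks the one idea that makes the estimate go through, namely working directly with Dobiński's series and locating where its mass concentrates.
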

\begin{proof}
  We use the following remarkable explicit formula for $B_n$ (see \cite{Wilf}):
  \begin{equation}\label{eqb1} B_n = \frac 1e \sum_{s=0}^{\infty} \frac{s^n}{s!}.\end{equation}
  Let us compare the $(s-1)$-th and $s$-th terms in the summation. We have
  \begin{equation}\label{eqb2} \frac{\frac{(s-1)^n}{(s-1)!}}{\frac{s^n}{s!}} = s\Big(1-\frac 1{s}\Big)^n\le se^{-n/s}<e^{-1},\end{equation}
  provided $\frac n{s}\ge 1+\log_es$, that is, $n\ge s+s\log_es$. Let us put $s_0(n) = \big\lfloor\frac{n}{\log_e n}\big\rfloor$ and note that $s_0(n)$ satisfies the last inequality for $n\ge 16$. (It is easy to verify by direct substitution of a larger value $s=\frac{n}{\log_e n}$ and using that $e^e<16$.)

  Using \eqref{eqb2}, we conclude that the terms in \eqref{eqb1} grow faster than a geometric progression with base $e$ until at least $s_0$, and so, using the formula for the sum of a geometric progression, we get $\sum_{s=0}^{s_0-1} \frac{s^n}{s!}\le \frac 1{e-1}\frac{s_0^n}{s_0!}.$ We can thus bound $B_n$ as follows:
  $$B_n\le \frac{e}{e-1}\sum_{s=s_0}^{\infty}\frac{s^n}{s!}.$$
  (We note that, of course, with a bit more care and for relatively large $n$, the fraction in front should be essentially $1$.)
  Let us now bound the ratio.
  $$\frac{B_{n+1}}{B_n}\ge \frac{\sum_{s=s_0(n)}^{\infty}\frac{s^{n+1}}{s!}} { \frac{e}{e-1}\sum_{s=s_0(n)}^{\infty}\frac{s^n}{s!}}\ge s_0(n)\cdot\frac{ (e-1)}{e}\ge\frac{n}{\log_e n}\cdot \frac 45\cdot \frac{ (e-1)}{e}\ge \frac{n}{2\log_e n},$$
where the second to last inequality holds for $s_0(n)\ge 4$ by the definition of $s_0(n)$. This completes the proof for $n\ge 16$ and $\big\lfloor\frac{n}{\log_e n}\big\rfloor\ge 4$. The last inequality holds for $n\ge 16$ as well, and thus we are left to verify the statement for $n\le 15.$

We have $B_n\ge 2B_{n-1}$ since, for each partition $P$ of $[n-1]$, element $n$ can be either made a separate part or adjoined to one of the parts in $P$. Similarly, $B_n\ge 3B_{n-1}$ for $n\ge 5$ because for each partition $P$ but the single partition with $1$ part it can be extended by $n$ in at least $3$ ways and, moreover, there are partitions that can be extended in at least $4$ ways. We are left to note that $n/(2\log_e n)\le 2$ for $2\le n\le 8$ and $n/(2\log_e n)\le 3$ for $9\le n\le 15$.
\end{proof}

Given a partition $P  =(P_1,\ldots, P_\ell)\in \m B_n$, we denote by $\dd_n\langle P\rangle\subset \bb_n$ the family of all partitions from $\bb_n$ that do not contain any of $P_i$ as a part. We call this family {\it $P$-derangements}.
\begin{lem}\label{lemb2} For  any partition $P\in \bb_n$ we have $|\dd_n\langle P\rangle|\ge c' e^{-\frac 32\log_e^2n} B_n$.
\end{lem}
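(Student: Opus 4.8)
The plan is to reduce the bound — which a priori depends on the whole partition $P$ — to a single quantity that does not, namely the number $B_n^{\ge 2}$ of partitions of $[n]$ all of whose blocks have size at least $2$, and then to bound $B_n^{\ge 2}$ from below crudely, since the claimed estimate is much weaker than the truth. (Throughout I assume $n\ge 2$, which the statement requires.) Write $a_i=|P_i|$. Since the parts $P_i$ are pairwise disjoint, the number of partitions of $[n]$ containing all of $\{P_i : i\in S\}$ as blocks equals $B_{n-\sigma(S)}$, where $\sigma(S)=\sum_{i\in S}a_i$; hence by inclusion--exclusion $|\dd_n\langle P\rangle|=\sum_{S\subseteq[\ell]}(-1)^{|S|}B_{n-\sigma(S)}$, with weights encoded by $\sum_{S}(-1)^{|S|}x^{\sigma(S)}=\prod_{i=1}^{\ell}(1-x^{a_i})$. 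The one real idea is to factor $1-x^{a_i}=(1-x)(1+x+\dots+x^{a_i-1})$, so that $\prod_i(1-x^{a_i})=(1-x)^{\ell}h(x)$ with $h(x)=\prod_i(1+x+\dots+x^{a_i-1})=\sum_{k\ge0}h_k x^k$, a polynomial with $h_k\ge 0$ and $h_0=1$. Pushing the factor $(1-x)^{\ell}$ back through the Bell numbers gives
\[
|\dd_n\langle P\rangle|=\sum_{k\ge0}h_k\,B^{(\ell)}_{n-k},\qquad B^{(\ell)}_m:=\sum_{j\ge0}\binom{\ell}{j}(-1)^j B_{m-j},
\]
where $B^{(\ell)}_m$ is the number of partitions of $[m]$ in which $\ell$ designated elements are all non-singletons, hence nonnegative (and $h_k\ne0$ forces $k\le n-\ell$, so $m=n-k\ge\ell$ whenever the term occurs). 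Dropping all but the $k=0$ term, $|\dd_n\langle P\rangle|\ge B^{(\ell)}_n\ge B^{(n)}_n=B_n^{\ge2}$, so the dependence on $P$ has disappeared.

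To bound $B_n^{\ge2}$ from below I would use a Dobi\'nski-type formula: from $B_n^{\ge2}=\sum_{k}\binom{n}{k}(-1)^k B_{n-k}$ together with \eqref{eqb1} one obtains
\[
B_n^{\ge2}=\frac1e\sum_{s\ge0}\frac{(s-1)^n}{s!}=\frac{(-1)^n}{e}+\frac1e\sum_{t\ge1}\frac{1}{t+1}\cdot\frac{t^n}{t!},
\]
which should be compared with $eB_n=\sum_{t\ge1}\frac{t^n}{t!}$, again by \eqref{eqb1}. The ratio of consecutive terms $\frac{t^n}{t!}$ is $(1+1/t)^n/(t+1)\le e^{n/t}/(t+1)$, which is below $1$ for $t\ge n$ and below $\tfrac12$ for $t>2n$. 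Hence those terms are non-increasing for $t\ge n$, so $M:=\max_{t\ge1}\frac{t^n}{t!}$ is attained at some $s^{\ast}\le n$; moreover the tail over $t>2n$ is at most $2M$, so splitting $eB_n$ into its first $2n$ terms and the tail gives $eB_n\le(2n+2)M$. Keeping only the $t=s^{\ast}$ term in the Dobi\'nski formula,
\[
B_n^{\ge2}\ \ge\ \frac{M}{e(s^{\ast}+1)}-\frac1e\ \ge\ \frac{M}{e(n+1)}-\frac1e\ \ge\ \frac{B_n}{2(n+1)^2}-\frac1e\ \ge\ \frac{B_n}{5(n+1)^2}
\]
for all $n$ above an absolute constant $n_1$ (using $B_n\to\infty$).

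Combining the two steps, for $n\ge n_1$ we get $|\dd_n\langle P\rangle|\ge B_n^{\ge2}\ge B_n/(5(n+1)^2)$; since $e^{-\frac32\log_e^2 n}=n^{-\frac32\log_e n}$ decays faster than any fixed power of $n$, there is an absolute $c'>0$ with $1/(5(n+1)^2)\ge c'e^{-\frac32\log_e^2 n}$ for all $n\ge n_1$. For the finitely many $2\le n<n_1$, the one-block partition $\{[n]\}$ belongs to $\dd_n\langle P\rangle$ unless $P=\{[n]\}$, in which case $|\dd_n\langle P\rangle|=B_n-1\ge1$; so $|\dd_n\langle P\rangle|\ge1$ always, and shrinking $c'$ finishes the proof. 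There is no serious obstacle here: the delicate-looking alternating sum collapses thanks to the factorization in the first paragraph, which uniformises the problem over all $P$, and the Dobi\'nski tail bound is routine precisely because losing polynomial factors is harmless against the target $e^{-\frac32\log_e^2 n}$.
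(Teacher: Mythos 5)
Your argument is correct, and it takes a genuinely different route from the paper at both stages. To reduce from an arbitrary $P$ to the all-singletons quantity $\tilde B_n := B_n^{\ge 2}$, the paper uses a combinatorial compression: if a part $P_i$ of size $\ge 2$ is split into two nonempty parts, an explicit injection shows that the number of $P$-derangements can only decrease, and iterating reduces to the all-singleton partition. You replace this with a purely algebraic identity: writing the inclusion--exclusion sum as $\prod_i(1 - E^{a_i})$ applied to the Bell sequence (with $E$ the backward shift), factoring out $(1-E)^\ell$, and noting that $(1-E)^\ell B$ evaluated at any index $\ge \ell$ is the (nonnegative) count of partitions with $\ell$ marked elements forced to be non-singletons, while the residual polynomial $h(E)$ has nonnegative coefficients with constant term $1$ and degree $n-\ell$; dropping all but the constant term gives $|\dd_n\langle P\rangle| \ge B^{(\ell)}_n \ge B^{(n)}_n = B_n^{\ge 2}$. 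This is a clean uniformization over $P$, and it recovers exactly the same intermediate target as the compression step. For the lower bound on $B_n^{\ge 2}$, the paper sets up the recurrence $\tilde B_{s+1} = \sum_{i=2}^{s-1}\binom{s}{i}\tilde B_i$ and proves a product lower bound against $B_s$ by induction, which after estimation yields the factor $e^{-\frac{3}{2}\log_e^2 n}$. You instead push inclusion--exclusion through Dobinski's formula \eqref{eqb1} to get $B_n^{\ge 2} = \frac{1}{e}\sum_{s\ge 0}\frac{(s-1)^n}{s!}$, compare termwise with $eB_n = \sum_{t\ge 1}\frac{t^n}{t!}$, locate the maximal term at some $s^\ast\le n$, and extract $B_n^{\ge 2} \ge \frac{B_n}{2(n+1)^2} - \frac{1}{e}$. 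This gives the polynomially stronger estimate $\tilde B_n\ge c\,B_n/n^2$ for large $n$ (the paper itself remarks that the exponent $\tfrac32$ could be pushed to $\tfrac12+\varepsilon$ with more care, but your route bypasses the $e^{-c\log^2 n}$ loss entirely), from which the stated bound follows a fortiori, and the finitely many small $n\ge2$ are absorbed by shrinking $c'$ since $|\dd_n\langle P\rangle|\ge 1$ always. Both arguments are sound; yours buys a sharper inequality and avoids both the compression lemma and the inductive recurrence, at the mild cost of the generating-function bookkeeping at the start.
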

Following our approach with more care, the constant in front of $\log_e^2s$ can be improved to $\frac 12+\epsilon$, provided $n\ge n_0(\epsilon)$ is large enough.
\begin{proof}
  The first step, inspired by a compression operation from Ku and Renshaw \cite{KuRe}, is to show that $|\dd_n\langle P\rangle|$ is minimized for a partition that consists of $n$ singletons. To do so, for any given partition $P$ that has a part $P_i$ with $|P_i|\ge 2$, we introduce a new partition $P'$ that is the same as $P$ except that it replaces part $P_i$ with two parts $Q_i,Q'_i$, where these are non-empty disjoint sets such that $Q_i\sqcup Q'_i = P_i$. Take any partition $U\in\dd_n\langle P'\rangle\setminus \dd_n\langle P\rangle$. Partitions $P$ and $P'$ mostly coincide, and the only part that the former has and the latter does not is $P_i$. Thus, $U$ must contain $P_i$ as a part. Define $f(U)$ to be a partition that coincides with $U$ except it replaces $P_i$ with $Q_i,Q'_i$. Then $f(U) \in\dd_n\langle P\rangle\setminus \dd_n\langle P'\rangle$. Moreover, $f$ is an injection. Thus, $|\dd_n\langle P'\rangle|\le |\dd_n\langle P\rangle|$.

Repeatedly applying the splitting operation, we arrive at the all-singleton partition $S$, for which the number of derangements is thus minimized. Following \cite{KuRe}, we denote
$$\tilde{B_n}=|\dd_n\langle S\rangle|.$$
Note that $\dd_n\langle S\rangle$ is the family of all partitions with parts of size $\ge 2$.
We have the following recurrence relations:
\begin{align*}
  B_{n+1} =& \sum_{i=0}^n{n\choose i}B_i \\
  \tilde{B}_{n+1} =& \sum_{i=2}^{n-1}{n\choose i}\tilde{B_i}.
\end{align*}
Let us prove by induction on $s\ge 2$ that  we have
\begin{equation}\label{eqb3}\tilde B_s\ge \frac 12\prod_{i=2}^{s-1} \Big(1-\frac{2\log_e (i+1)}{i+1}\Big) \Big(1-\frac {2i+2}{3^{i}}\Big)\cdot B_s.\end{equation}
We have $B_{i+1}\ge 2B_{i}$ if $i\ge 1$, and thus $1=B_0=B_1$ and $B_i\ge 2^{i-1}$ for $i\ge 2$. Thus,
$$\frac{\sum_{i=2}^s {s\choose i}B_i}{\sum_{i=0}^s {s\choose i}B_i}\ge 1-\frac{s+1}{\frac 12+\frac 12\sum_{i=0}^s 2^i{s\choose i}}\ge 1-\frac{2s+2}{3^s}.$$
 Returning to \eqref{eqb3}, it is true for $s=2$ because $\tilde B_2=1$ and $B_2 = 2$.
We have (using induction in the first inequality below, the last displayed inequality in the third inequality below, and Lemma~\ref{lemb1} in the fourth inequality)
\begin{align*}\tilde B_{s+1} = \sum_{i=2}^{s-1}{s\choose i}\tilde{B_i}\ge& \frac 12\sum_{i=2}^{s-1}{s\choose i} \left(\prod_{j=2}^{i-1} \Big(1-\frac{2\log_e (j+1)}{j+1}\Big)\Big(1-\frac {2j+2}{3^{j}}\Big)\right) B_i\\
\ge& \frac 12 \left(\prod_{j=2}^{s-1} \Big(1-\frac{2\log_e (j+1)}{j+1}\Big)\Big(1-\frac {2j+2}{3^{j}}\Big)\right)\sum_{i=2}^{s-1}{s\choose i} B_i\\
\ge& \frac 12 \left(\prod_{j=2}^{s-1} \Big(1-\frac{2\log_e (j+1)}{j+1}\Big)\right)\left(\prod_{j=2}^{s} \Big(1-\frac {2j+2}{3^{j}}\Big)\right)\sum_{i=0}^{s-1}{s\choose i} B_i\\
   =& \frac 12\left(\prod_{j=2}^{s-1} \Big(1-\frac{2\log_e (j+1)}{j+1}\Big)\right)\left(\prod_{j=2}^{s} \Big(1-\frac {2j+2}{3^{j}}\Big)\right)(B_{s+1}-B_s)\\
   \ge& \frac 12\left(\prod_{j=2}^{s} \Big(1-\frac{2\log_e (j+1)}{j+1}\Big)\Big(1-\frac {2j+2}{3^{j}}\Big)\right) B_{s+1}.\end{align*}
This proves \eqref{eqb3}. Let us now obtain a bound on $\tilde B_n$. First, we note that $\prod_{i=2}^{s} \Big(1-\frac {2i+2}{3^{i}}\Big)\ge \frac 13\big(1-\sum_{i=3}^s \frac {2i+2}{3^{i}}\big)\ge \frac 13(1-\frac 8{27}-2\cdot \frac {10}{81})\ge \frac 19$. Next,
$$\prod_{i=3}^s\Big(1-\frac {2\log_e i}i\Big)\ge c e^{-\sum_{i=3}^s \frac {3\log_e i}i}\ge ce^{-\frac 32\log_e^2s}$$
for some positive constant $c$. In the first inequality we used the bound $(1-\frac 1x) = (1+\frac{1}{x-1})^{-1}\ge e^{-\frac 1{x-1}}$.

Thus, we get
$$\tilde B_n\ge \frac 1{18} c e^{-\frac 32\log_e^2n} B_n =c' e^{-\frac 32\log_e^2n} B_n,$$
for some positive constant $c'$.
\end{proof}

\begin{proof}[Proof of Theorem~\ref{thm6}] We give the following set interpretation to $\bb_n$. Consider the ground set $2^{[n]}$, and let $P\in \bb_n$ be mapped into a $\le n$-element set $A$ on $2^{[n]}$, where each element of $A$ corresponds to a part from the partition $P$. In what follows, we think of $\bb_n$ as a family of sets. Note that, using Lemma~\ref{lemb1}, for any set $X\in {2^{[n]}\choose s}$, $s\le n$, such that $\bb_n(X)$ is non-empty, we get
$$\Big(\frac{|\bb_n|}{|\bb_n(X)|}\Big)^{1/s}\ge \Big(\frac{B_n}{B_{n-s}}\Big)^{1/s}\ge \Big(\prod_{i=n-s}^{n-1}\frac{i}{2\log_e i}\Big)^{1/s}\ge \Big(\frac{(n-1)!}{(2\log_e (n-1))^{n-1}}\Big)^{1/(n-1)}\ge $$
$$ \frac{n/e}{2\log_e (n-1)}\ge \frac {n}{6 \log_e n},$$
where for the last inequality we need $n$ to be somewhat large (e.g., $n\ge 50$ is sufficent).

From the above, we get that the family $\bb_n$ is $r_0$-spread for $r_0=\frac n{6\log_e n}$. Moreover, it is weakly $(\frac n{12\log_e n},t)$-spread for any $t\le n/2$. Recall the definition of weakly $(r_0/2,t)$-spread family. We need to show that  $|\m B_n(X_0)|/|\m B_n(X)|\ge (r_0/2)^{|X|-|X_0|}$ for any $X$ of size greater than or equal to  $t$ and for the choice of $X_0$ such that $|\m B_n(X_0)|$ is the largest.
Take $\m B_n(X_0)$ to be to all partitions that fix $t$ prescribed singletons. We have $|\m B_n(X_0)| = B_{n-t}$. We need to compare that with the number of partitions that fix some $t+s$ parts, $s\ge 0$, and there are at most $B_{n-t-s}$ of those for any choice of the fixed parts. To bound the ratio, we reuse the bounds displayed above.

We are now ready to prove the theorem. Take a $t$-intersecting family $\ff\subset\bb_n$. First, we apply Theorem~\ref{thmkz1} with $r = r_0/2$ and $q = 2^{-10}\frac{n}{\log_2 n}$. We get a $t$-intersecting family $\s$
of sets of size at most $q$ (corresponding to collections of $q$ pairwise disjoint sets) and a remainder family $\ff'$ such that $$|\ff'|\le 2^{-q-1}B_n\le n^{-q/\log_2 n} B_n\le n^{-t-4\log_en }B_n\le n^{-4\log_en}B_{n-t}.$$ Here we used a bound $B_n/B_{n-1}\le n$, which is valid since any partition of $n-1$ elements can be prolonged in at most $n$ ways to a partition on $n$ elements, as well as the bound $t+4\log_e n\le \frac q{\log_2 n}$, valid for our choice of parameters.

Next, we apply Theorem~\ref{thmkz2} to the family $\s$ with $\epsilon = 1/2$. The family $\bb_n$ is weakly $(r_0/2,t)$-spread, and we have $r_0/2\ge 48q$, and thus the inequality on the parameters is satisfied. We conclude that either $|\ff|\le \frac 12 B_{n-t}+|\ff'|\le 0.6 B_{n-t}$, or $\s$ consists of one $t$-element set $S$. Moreover, in order for $\ff$ to be maximal, this set in the partition language must clearly correspond to a collection of $t$ singletons. Otherwise, $|\ff|\le B_{n-t-1}+|\ff'|\le 0.6 B_{n-t}$ again.

Finally, let us show that $\ff'\setminus \bb_n[S]$ must be empty.  Indeed, assume $P\in \ff'\setminus \bb_n[S]$. Then, by Lemma~\ref{lemb2}, the number of partitions in $\bb_n(S)$ that are derangements with respect to $P$ (induced on the complement of $S$ in the partition language), is at least $c' e^{-\frac 32\log_e^2 (n-t)} B_{n-t}$, which is larger than $|\ff'|$ by the last displayed formula. We get that if $\ff'$ is non-empty, then $\ff$ cannot be extremal. This completes the proof of the theorem.
 \end{proof}

\subsection{Partitions with $k$ parts. Proof of Theorem~\ref{thmp1}}
We will need the following relation between Stirling coefficients of the second kind.
\begin{lem}\label{lemp1}
  For each $n\ge 1+2\ell\log_2 n$, $\ell\ge 2$ we have $\stir n\ell\ge n^2 \stir{n-1}{\ell-1}$.
\end{lem}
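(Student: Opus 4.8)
The plan is to prove the bound $\stir n\ell\ge n^2 \stir{n-1}{\ell-1}$ via the standard recurrence for Stirling numbers of the second kind, namely $\stir n\ell = \ell \stir{n-1}\ell + \stir{n-1}{\ell-1}$. The term $\ell\stir{n-1}\ell$ alone is of no help (it could even be $0$ when $\ell>n-1$), so the real work is to extract a large multiple of $\stir{n-1}{\ell-1}$ out of the full quantity $\stir n\ell$. The natural way to do this combinatorially: a partition of $[n]$ into $\ell$ parts is obtained from a partition of $[n-1]$ into $\ell-1$ parts by taking the element $n$ together with some subset of one existing part to form the $\ell$-th part — but this overcounts, so instead I would go the other direction and count, for a fixed partition $Q$ of $[n-1]$ into $\ell-1$ parts, how many partitions of $[n]$ into $\ell$ parts ``refine down'' to $Q$ when we delete $n$ and merge its block back in. Concretely, given $Q=(Q_1,\dots,Q_{\ell-1})$, we may split any one block $Q_i$ with $|Q_i|\ge 2$ into two nonempty pieces, put $n$ into one of them (or make $\{n\}$ its own block), yielding many distinct $\ell$-partitions of $[n]$, all mapping back to distinct $Q$'s only if we are careful; so the cleaner route is an injection argument: exhibit $\ge n^2$ distinct $\ell$-partitions of $[n]$ associated to each $(\ell-1)$-partition of $[n-1]$, in a way that the associated partitions are all distinct across different $Q$.

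The key steps, in order: (1) Set up the map. Start from a partition $P$ of $[n]$ into $\ell$ parts; let $B$ be the part containing $n$; if $|B|\ge 2$ remove $n$ and keep the $\ell-1$ remaining parts together with $B\setminus\{n\}$ merged appropriately — actually the robust version is to build $P$ from $Q$: pick $Q$, pick an index $i$ and a proper nonempty subset $S\subsetneq Q_i$, and form $P$ with parts $Q_1,\dots,\widehat{Q_i},\dots,Q_{\ell-1}, S\cup\{?\}$... this requires care, so (2) instead I would count directly: the number of $\ell$-partitions of $[n]$ whose block containing $n$ has size $\ge 2$ is at least (number of ways to choose a partition of $[n-1]$ into $\ell-1$ parts) times (number of ways to pick one element $x\in[n-1]$ to join $n$ in a size-$2$ block, after removing $x$ from its block — but we must not empty a block). (3) The bound $n\ge 1+2\ell\log_2 n$ will be used precisely to guarantee that parts are large enough on average (by pigeonhole some part has size $\ge n/\ell$, which is $\ge 2\log_2 n \gg 1$), so that removing an element rarely empties a block and we genuinely gain a factor of order $n$ twice — once from the choice of $n$'s partner(s) and once from iterating or from choosing a second element. (4) Conclude $\stir n\ell\ge n^2\stir{n-1}{\ell-1}$ by assembling these counts.

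The main obstacle I anticipate is handling the overcounting/injectivity cleanly: naively "pick $n$'s block-mate" maps many $\ell$-partitions of $[n]$ to the same $(\ell-1)$-partition of $[n-1]$, which is exactly what we want (that's the multiplicity), but one must ensure the multiplicity is genuinely $\ge n^2$ uniformly over all $Q\in\pp_{n-1}^{\ell-1}$, including degenerate $Q$ with many singleton blocks. Here is where the hypothesis $n\ge 1+2\ell\log_2 n$ is essential: it forces $n-1 \ge 2\ell\log_2 n$, so $Q$ cannot have too many singletons — at most $\ell-1$ of its $n-1$ elements are singletons, leaving $\ge n-\ell$ elements in blocks of size $\ge 2$, and one checks $n-\ell$ is comfortably larger than $n^2/(\text{something})$... actually the gain of $n^2$ (rather than just $n$) likely comes from: factor $n$ from which element of $[n]$ plays a special role, and another factor $\ge n/(2\log_2 n)\cdot(\dots)$ from the internal structure, with the $\log$ factors absorbed by the slack in $n\ge 2\ell\log_2 n$. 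I would organize the final inequality to make the two factors of $n$ transparent and relegate the verification that the slack suffices to a short computation using $\ell\log_2 n\le n/2$.
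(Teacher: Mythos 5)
There is a genuine gap, and it starts with your opening dismissal: you write that the term $\ell\stir{n-1}{\ell}$ in the recurrence ``is of no help (it could even be $0$ when $\ell>n-1$),'' and discard it. But under the hypothesis $n\ge 1+2\ell\log_2 n$ we have $\ell\le (n-1)/(2\log_2 n)$, far below $n-1$, so $\stir{n-1}{\ell}$ is not only nonzero but enormous compared to $\stir{n-1}{\ell-1}$. In fact the paper's proof relies \emph{entirely} on that term: it writes $\stir n\ell\ge\ell\stir{n-1}\ell$ and then shows $\stir{n-1}{\ell}\ge\frac{n^2}{\ell}\stir{n-1}{\ell-1}$ by double-counting edges in the bipartite refinement graph on $\pp_{n-1}^\ell\cup\pp_{n-1}^{\ell-1}$: every $\ell$-partition of $[n-1]$ has exactly $\binom{\ell}{2}$ coarsenings, while an $(\ell-1)$-partition with block sizes $k_1,\ldots,k_{\ell-1}$ has $\sum_i(2^{k_i-1}-1)$ refinements, which by convexity is at least $(\ell-1)\bigl(2^{(n-1)/(\ell-1)-1}-1\bigr)$; the hypothesis makes the exponent at least $2\log_2 n -1$, giving the needed factor. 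By throwing away $\ell\stir{n-1}{\ell}$ at the outset, you have forfeited the argument's engine.

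Your replacement plan --- build many $\ell$-partitions of $[n]$ from each $(\ell-1)$-partition $Q$ of $[n-1]$ by choosing a block, a proper subset $S$ of it, and attaching $n$ --- also does not close. You yourself flag the problem: given $P\in\pp_n^\ell$, deleting $n$ from its block leaves a set $S$ that is a block of a partition of $[n-1]$ into $\ell$ parts (or $\ell-1$ parts if $S=\emptyset$), and there is no canonical way to merge $S$ back to recover a unique $Q\in\pp_{n-1}^{\ell-1}$. So the map is not well-defined in the direction needed to conclude a fiber-size bound, the multiplicity-$\ge n^2$ claim is never established, and your points (3) and (4) remain ``the $\log$ factors are absorbed by the slack'' without a calculation. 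As submitted, this is an outline of two competing strategies (one rejected for the wrong reason, one not carried through), not a proof.
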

\begin{proof}
  Stirling numbers obey the following recurrence relation:
  $$\stir{n}{\ell}=\stir{n-1}{\ell-1}+\ell\stir{n-1}{\ell}.$$
  From here, we see that $\stir{n}{\ell}\ge \ell\stir{n-1}{\ell}$. Next, let us compare $\stir{n-1}{\ell}$ and $\stir{n-1}{\ell-1}$. Consider a bipartite graph between $\ell$-partitions and $(\ell-1)$-partitions of $[n-1]$, where two partitions are connected by an edge if one is a refinement of the other. Let us count the degrees in this graph. The neighbors of an $\ell$-partition $X$ are the $(\ell-1)$-partitions obtained by merging two parts of $X$. Thus, the degree of $X$ is ${\ell\choose 2}$. The neighbors of an $(\ell-1)$-partition $Y$ are those that can be obtained by subdividing one of the parts of $Y$ into non-empty parts. Assume that the parts $Y_1,\ldots, Y_\ell$ have sizes $k_1,\ldots, k_\ell$, $\sum_{i=1}^\ell k_i = n$. There are $2^{\ell_i-1}-1$ ways to subdivide $Y_i$. Thus, the degree of $Y$ is $\sum_{i=1}^\ell(2^{k_i-1}-1)\ge \ell 2^{-1+(n-1)/\ell}-\ell$ by convexity. The last expression is at least $\ell n^2/2-\ell$ by our assumption on $n$. Concluding, the degree of any element in the $\ell$-partitions part is $\ge n^2/\ell$ times smaller than the degree of any element in the $(\ell-1)$-partitions part. Double counting the number of edges, we get that $\stir{n-1}{\ell}\ge n^2/\ell\stir{n-1}{\ell-1}$ in our assumptions. Combining it with the displayed formula, we get that $\stir{n}{\ell}\ge n^2\stir{n-1}{\ell-1}$.
\end{proof}

\begin{proof}[Proof of Theorem~\ref{thmp1}]  We interpret  $\pp_n^\ell$ as a subfamily of $\bb_n$.  That is, the ground set is $2^{[n]}$, and each $X\in \pp_n^\ell$ is mapped into an $\ell$-element set on $2^{[n]}$, where each element of the set corresponds to a part from the partition $X$. In what follows, we think of $\pp_n^\ell$ as a family of $\ell$-element sets.

Let us derive that $\pp_n^\ell$ is weakly $(\frac {n^2} 2,t)$-spread for each $t\le \ell-2$. We take a collection of $t$ distinct singletons as the set $T$ in the definition of a weakly spread partition. The number of $\ell$-partitions extending it is $\stir{n-t}{\ell-t}$. The number of partitions with any $t+s$ fixed parts is at most $\stir{n-t-s}{\ell-t-s}$. Using Lemma~\ref{lemp1}, we have
$$\stir{n-t}{\ell-t}/\stir{n-t-s}{\ell-t-s}\ge (n-t-s)^{2s}\ge (n^2/2)^s,$$
provided $\ell-t-s\ge 1$. If $\ell-t-s=0$ then we combine it with the bound $\stir{n-t-s+2}2 =2^{n-t-s+1}-1\ge 2^{n/2} \ge n^4=n^4\stir{n-t-s}{\ell-t-s}$ (the last inequality is valid due to our choice of $n$). Thus, the last displayed inequality is always true, and we conclude that $\pp_n^\ell$ is weakly $(\frac {n^2}2,t)$-spread for each $t\le \ell-2$.

Apply Theorem~\ref{thmkz2} to $\ff$ with $\varepsilon=1/2$, $\ell$ playing the role of $q$ and $r= n^2/2$. We need to check if $r\ge 48\ell$. The inequality clearly holds for $n$ satisfying $n\ge 2\ell\log_2 n$ and $n\ge 48$. We conclude that either $\ff$ is a family of partitions extending a fixed set of $t$ singletons, or $|\ff|\le \frac 12\stir{n-t}{\ell-t}$. This concludes the proof of the theorem.
\end{proof}

\subsection{Profiled $t$-intersecting partitions. Proof of Theorem~\ref{thmu1}}
 For this proof, we also interpret $\uu_P$ as a subfamily of the family corresponding to $\bb_n$. This way, we have $\uu_P\subset {2^{[n]}\choose \ell}$.
In this setup, we also can directly apply Theorem~\ref{thmkz2} with $\epsilon = 1/2$ and get the desired conclusion, provided that we can show that $\uu_{k,\ell}$ is a weakly $(r,t)$-spread family with $r\ge 48\ell$. We will show this below.  Let $a_t$ be the number of partitions that contain fixed parts of sizes $k_1,\ldots, k_t$, and let $b_U$ be the number of partitions that contain fixed parts of sizes $k_i, i\in U$. Note that, for any $w$ and $U$ with $|U|=w$ we have $a_w\ge b_U$. Let $U$ be a subset of $[\ell]$ of size $t+s$. For shorthand, let us denote $n_j=\sum_{i=j+1}^\ell k_i$. 
In order to count the numbers of partitions, we need to split $k_{t+1},\ldots, k_{\ell}$ into groups by their size. Assume that $g_0:=t< g_1<g_2\ldots <g_m:=\ell$ are defined as follows: $k_{t+1} =\ldots = k_{g_1}<k_{g_1+1}=\ldots = k_{g_2}<\ldots$. Thus, among $k_{t+1},\ldots, k_{s}$ there are $m$ distinct values, specifically, for each $i\in [m]$ there are $g_{i}-g_{i-1}$ parts of size $k_{g_i}$. Also, take $j$ such that $g_{j-1}<t+s+1\le g_{j}.$ Then we have
\begin{align*}\frac {a_t}{b_U}\ge \frac {a_t}{a_{t+s}}&= \frac{\frac{n_t!}{\big(\prod_{i=1}^m(g_i-g_{i-1})!\big)\big(\prod_{i=t+1}^\ell k_i!\big)}}{\frac{n_{t+s}!}{\big((g_j-(t+s))!\prod_{i=j+1}^m(g_i-g_{i-1})!\big)\big(\prod_{i=t+s+1}^\ell k_i!\big)}}\\
&=\frac{(g_j-(t+s))!n_t!}{\big(\prod_{i=1}^j(g_i-g_{i-1})!\big)n_{t+s}!\prod_{i=t+1}^{t+s} k_i!}\\
&\ge \frac{(g_j-(t+s))!n_t!}{(g_j-t)! (n_{t}-3s)!\prod_{i=t+1}^{t+s} 3!}\\
&\ge \frac{n_t!}{(\ell-t)^s (n_{t}-3s)!6^s}
\end{align*}

Let us comment on the first inequality. We bounded the first product in the denominator using $(a+b)!\ge a!b!$. We also used  that for $k\ge k'$ we have $k!k'!\le (k+1)!(k-1)!$, as well as that $k_i\ge 3$ for $i\ge t+1$. We iteratively apply it to show that, in the conditions $k_i\ge 3$ and $n_{t+s}+\sum_{i=t+1}^{t+s}k_i = n_t$, the expression $n_{t+s}!\prod_{i=t+1}^{t+s} k_i!$ is maximized when the first term is $(n_t-3s)$, and all the $k_i$'s are equal to $3$.

Below, we use that $(n_t)!/(n_t-3s)!\ge ((n_t)!)^{(3s)/n_t}\ge (n_t/e)^{3s}$ and that $n_t\ge 3(\ell-t)\ge \frac 32 \ell$, since $\ell\ge 2t$.  The last displayed expression is at least
$$\Big(\frac{n_t^3}{e^3 6(\ell-t)}\Big)^s\ge \Big(\frac{\ell^2}{20}\Big)^s.$$
We get that the family $\uu_P$ is weakly $(\ell^2/10,t)$-spread. Since $\ell^2/20> 48\ell$ for $\ell\ge 1000$, this is sufficient for our application of Theorem~\ref{thmkz2}.


\subsection{Partially $t$-intersecting partitions. Proof of Theorem~\ref{thm1}}
In what follows, we assume that $k\ge 3$. First, we interpret $\uu_{k,\ell}$ as a family of sets. Put $N:= {k\ell\choose 2}$ and correspond to each partition $P$ in $\uu_{k,\ell}$ the set of all pairs of elements $x_1,x_2\in [k\ell]$ such that $x_1$ and $x_2$ belong to the same part in $P$. Note that, in graph terms, $P$ is a collection of $\ell$ $k$-cliques. As a set, each $P$ has size $\ell {k\choose 2}$. In what follows, we often work with partitions in this set form and think of $\uu_{k,\ell}$ as a family in $\binom{[N]}{{k\choose 2}\ell}$. 

We shall show that, as a set family, $\uu_{k,\ell}$ is sufficiently spread, provided $\ell$ is large enough. For that, we need to analyze the numbers of different partitions. Consider a collection $X = \{X_1,\ldots, X_a\}$ of disjoint sets in $[k\ell]$, such that $2\le |X_i|\le k$. We call $X$ {\it a subpartition}. Put $m(X) = \sum_{i=1}^a (|X_i|-1)$. We say that a $(k,\ell)$-partition $P = (P_1,\ldots, P_\ell)$  {\it extends} a subpartition $X$ if for each $j$ there is $i$ such that $X_j\subset P_i$, and that $P$ {\it extends $X$ in an injective way} if these $i$'s are distinct for distinct $j$'s. The difference is that in the first case $P_i$ may simultaneously contain several parts from $X$.  
We shall use the following inequality for $a>b$: $a!/b!\ge (a!)^{(a-b)/a}\ge (a/e)^{a-b}.$
We claim that the number $\tilde u_{k,\ell}(X)$ of partitions that extend $X$ in an injective way is at most $\big(\frac 9\ell \big)^{m(X)} u_{k,\ell}.$ Indeed, we have the following bound:
$$\frac{\tilde u_{k,\ell}(X)}{u_{k,\ell}}=\frac{\frac{(k\ell-m(X)-a)!} {(\ell-a)!(k!)^{\ell-a}\prod_{i=1}^a (k-|X_i|)!}}{\frac{(k\ell)!}{\ell!(k!)^\ell}}\le$$$$ \frac{k^{m(X)+a}\ell^a}{(k\ell/e)^{m(X)+a}}\le  \frac {e^{m(X)+a}}{\ell^{m(X)}}\le \frac {9^{m(X)}}{\ell^{m(X)}},$$
where in the last inequality we used that $m(X)\ge a$. Note that this holds for subpartitions $X$ that fix 
at least $2$ elements in each ``active'' block. 

The value that we shall need in what follows is  the number $u_{k,\ell}(X)$ of partitions that extend $X$ (but not necessarily in an injective way). Any such partition extends $X'$ in an injective way, where $X'$ is obtained from $X$ by merging some parts. Assume that $X'$ contains $r$ parts that were obtained by merging of some of the parts of $X$, and that the total number of parts that participated in merging is $q$. Then $X'$  contains $a-q+r$ parts, where $q-r\ge q/2$, and we have  $m(X') = m(X)+q-r$. For each such $X'$ we have $\tilde u_{k,\ell}(X')/u_{k,\ell}\le (9/\ell)^{m(X)+q-r}$. Next, let us bound the number of possible $X'$ for given values $q,r$. First, we select $q$ parts of $X$ out of $a$ to be merged, and then for each part we decide, which of the $r$ merged parts it would go to. (At this point, we ignore the fact that each merged part must include at least two original parts, so this is clearly an overestimate.)  Thus we could bound the number of such $X'$ for given values of $q,r$ as follows: ${a\choose q} r^{q}\le \big(\frac{aer}{q}\big)^q\le (2a)^q$, since $r\le q/2$. 
Then we can bound $u_{k,\ell}(X)/u_{k,\ell}$ as follows (in the chain of inequalities, we bound a sum by the sum of a geometric progression twice, using that $\ell$ is sufficiently large)
\begin{align}\notag\frac{u_{k,\ell}(X)}{u_{k,\ell}}\le \sum_{X'}\frac{\tilde u_{k,\ell}(X')} {u_{k,\ell}}&\le \sum_{q=2}^a\sum_{r=1}^{q/2}\frac{(2a)^{q} 9^{m(x)+q-r} }{\ell^{m(x)+q-r}}\le 2 \sum_{q=2}^a\frac{(2a)^{q} 9^{m(x)+q/2} }{\ell^{m(x)+q/2}}\\
\label{eqpart3}\le 2 \sum_{q=2}^a\frac{(k\ell)^{q} 9^{m(x)+q/2} }{\ell^{m(x)+q/2}}&=2 \sum_{q=2}^a\frac{(9k^2 \ell)^{q/2} 9^{m(x)} }{\ell^{m(x)}}\le 4 \frac{(9k^2 \ell)^{a/2} 9^{m(x)} }{\ell^{m(x)}}\le \frac{(9^{3}k)^{m(x)} }{\ell^{m(x)/2}},
\end{align}
where we used that $a\le k\ell/2$ when passing to the second line (the ground set has size $k\ell$, thus we cannot have more than $k\ell/2$ parts of size at least $2$), and in the last inequality we used $a\le m(x)$ again.

The family of partitions that extend $X$ can be expressed in different ways in set terms.\footnote{In what follows, we refer to elements of $[N]$ as edges in the graph sense. It is natural since they correspond to pairs of elements of the original ground set.} Actually, it is necessary and sufficient for each $i\in [a]$ to fix a subgraph on $X_i$ that is connected, and add no other edges. Thus, the largest number of edges we can fix is $\sum_{i=1}^a{|X_i|\choose 2} = \sum_{i=1}^a \frac {|X_i|(|X_i|-1)}2\le \frac k2 \sum_{i=1}^a(|X_i|-1) = \frac {km(X)}2$. Put differently, if we have fixed $x$ edges, then  the corresponding $X$ satisfies $m(X)\ge\lceil\frac {2x}k\rceil$. Moreover, the partitions that we get are precisely the partitions that extend $X$, where $X$ is the set of connected components of size at least $2$ defined by these $x$ edges. Thus the bound \eqref{eqpart3} is valid for all subpartitions that may arise this way.
From here, we get that the family $\uu_{k,\ell}$ is $(\frac \ell {9^6k^2})^{1/k}$--spread.

If $\ell$ is sufficiently large (say, $\ell>k^{Ck}$ with some large constant $C$), this spreadness is sufficient to apply the spread approximation machinery.
But before we do so, let us analyze, what happens with the $t$-intersecting property when passing to the set interpretation. If two $(k,\ell)$-partitions partially $t$-intersect, then they have ${t\choose 2}$ edges in common. In what follows, we will be working with this ${t\choose 2}$-intersection property for set families. However, there is a complication that we have to overcome: there are obviously many other ways for two sets to have intersection of size $\ge {t\choose 2}$, without the corresponding partitions necessary being partially $t$-intersecting. Luckily, being partially $t$-intersecting is the most ``economical'' way, which allows us to overcome this complication.

Consider a ${t\choose 2}$-intersecting (in the set sense) family $\ff\subset \uu_{k,\ell}$. Apply Theorem~\ref{thmkz1} with $r = \big(\frac \ell{9^6k^2}\big)^{1/2k}$, $r_0 = \big(\frac \ell {9^6k^2}\big)^{1/k}$, $q = k^{10}$. The uniformity $\ell {k\choose 2}$ plays the role of $k$ from the theorem. We need $\big(\frac \ell{9^6k^2}\big)^{1/2k}>2k^{10}$ and $\big(\frac \ell{9^6k^2}\big)^{1/2k}>2^{12}\log_2(2\ell{k\choose 2}).$ Both inequalities on $r$ are valid, provided, say, $\ell> k^{100k}$.
This allows us to apply Theorem~\ref{thmkz1} and get a family $\s$ of sets of size at most $q$ that cover most partitions in $\ff$, and a remainder family $\ff'\subset \ff$ satisfying
\begin{equation}\label{eq24}|\ff'|\le (r/r_0)^{q+1}u_{k,\ell} \le \Big(\Big(\frac {9^6k^2}\ell\Big)^{1/2k}\Big)^{k^{10}}u_{k,\ell}\le \ell^{-k^{7}}u_{k,\ell}.\end{equation}

The next step is to apply Theorem~\ref{thmkz2} to our approximation $\s$ with ${t\choose 2}$ playing the role of $t$. Before we do so, we need to show that $\uu_{k,\ell}$ possesses the weak $(r',{t\choose 2})$-spreadness property with a sufficiently large $r'$. In order to do so, we need to return to the analysis of the subpartitions. Let $E$ be a collection of ${t\choose 2}$ edges and let $X = (X_1,\ldots, X_a)$ be the corresponding subpartition. Then ${t\choose 2} = \sum_{i=1}^a {|X_i|\choose 2}$. Recall that $m(X) = \sum_{i=1}^a(|X_i|-1)$.

\begin{lem} Suppose that ${t\choose 2} = \sum_{i=1}^a {|X_i|\choose 2}$.
\begin{itemize}
    \item[(i)] We have $m(X)\ge t-1$, moreover, $m(X)=t-1$ if and only if $a=1$, and the subpartition consists of just $1$ set $X_1$ of size $t$.
  \item[(ii)] If $E$ is a collection of ${t\choose 2}+s$ edges, then the corresponding partition $X(E)$ satisfies $m(X(E))\ge t-1+\frac sk$.
\end{itemize}
\end{lem}
\begin{proof}
(i) If $\max |X_i|>t$ then $m(X)>t-1$ and we are done. Thus, assume that $\max|X_i|\le t$ and $m(X)\le t-1$. We have $$\sum_{i=1}^a {|X_i|\choose 2}\le \max |X_i|\cdot\frac{m(X)}2\le t\cdot \frac{t-1}2 = {t\choose 2},$$ where the last inequality turns into equality iff $\max |X_i|=t$.

(ii) Let $i\in [t,k]$ be such that either $i=k$ and $|E|\ge {k\choose 2},$ or $i\le k-1$ and ${i\choose 2}\le |E|<{i+1\choose 2}.$ Let us deal with the easier case of $i=k$. Note that in any case we have $|X_i|\le k$. We have
\begin{equation}\label{eqsh1} |E|\le\sum_{i=1}^a {|X_i|\choose 2}\le \max |X_i|\cdot\frac{m(X)}2\le k\cdot \frac{m(X)}2,\end{equation}
and thus $m(X)\ge \frac {2|E|}k$. Let $|E|={k\choose 2}+\ell= {t\choose 2}+s$, and so $s= {k\choose 2}-{t\choose 2}+\ell$. Note that ${k\choose 2}-{t\choose 2} = (k-1)+(k-2)+\ldots+t\le (k-t)(k-1).$ We have
$$m(X)\ge \frac {2|E|}k=k-1 +\frac{2\ell}k\ge t-1+\frac{(k-t)(k-1)}{k}+\frac{\ell}k\ge t-1+\frac{{k\choose 2}-{t\choose 2}+\ell}{k}=t-1+\frac sk.$$
Next, assume that $i\le k-1.$  If $\max|X_i|\le i$, then we can lower bound $m(X)$ as in \eqref{eqsh1}: $m(X)\ge \frac{2|E|}i.$ Let us put $|E|={i\choose 2}+\ell$, where $0\le \ell<i.$ We will use the following inequality: ${i\choose 2}-{t\choose 2} =(i-1)+(i-2)\ldots+t\le (i-t)(i-1)$. We have
$$m(X)\ge \frac {2|E|}i=i-1 +\frac{2\ell}i\ge t-1+\frac{(i-t)(i-1)}{i}+\frac{\ell}i\ge t-1+\frac{{i\choose 2}-{t\choose 2}+\ell}{i}\ge t-1+\frac sk.$$

If $|X_i|\ge i+1$ for some $i\le k-1$ then we have
$$m(X)\ge i=t-1+\frac{(i-t+1)i}i\ge t-1+\frac{{i+1\choose 2}-{t\choose 2}}i\ge t-1+\frac sk.$$
This completes the proof.\end{proof}

Choose a collection $T$ of ${t\choose 2}$ edges  that corresponds to a subpartition of $1$ $t$-element set (in other words, a $t$-clique). Once we are equipped with the property that for any collection of edges $E$ of size ${t\choose 2}+s$ the corresponding partition $X$ satisfies $m(X)\ge t-1+\frac sk$, it is easy to do similar calculations as for $u_{k,\ell}$ and $u_{k,\ell}(X)$ above, and obtain that $\uu_{n,k}(T)$ is $r'$-spread with $r' = \big(\frac \ell{9^6k^2}\big)^{1/2k}$. We note that this weak $(r,t')$-spreadness property is subtle in this application: the family $\uu_{k,\ell}$ only possesses it for $t' = {t\choose 2}$ for integer $t$.

The next step is to apply Theorem~\ref{thmkz2} to our approximation $\s$. Let  ${t\choose 2}$ play the role of $t$, $\epsilon= 1/2$, $r'$ playing the role of $r$, and $q=k^{10},$ as above. Again, we can see that the inequality on $r'$ is valid, provided $\ell$ is large enough (again $\ell>k^{Ck}$ is sufficient). The conclusion is that $\s$ must consist of a single set $T$ of size ${t\choose 2}$, otherwise the size of $\uu_{k,\ell}[S]$ is at least twice smaller than $\uu_{k,\ell}[T]$ for the largest $T$. Moreover, $T$ must correspond to a subpartition consisting of one $t$-element set.

At this point, we have proved a rough version of the conjecture, along with stability: if the size of a partially $t$-intersecting family of $(k,\ell)$-partitions is at least $0.51|\cc^T_{k,\ell}|$, then it is contained in some $\cc^T_{k,\ell}$, with an exception of at most $\ell^{-k^7}u_{k,\ell}$ sets. These exceptional sets form the family $\ff'$ from above, and we next aim to show that this family is empty. (In what follows, we assume that $\ff'\cap \cc^T_{k,\ell}=\emptyset$, since otherwise we could move these sets to $\ff\setminus \ff'$.)

In order to show that $\ff' = \emptyset$ for an extremal $\ff$, we need to get some understanding on how often do random $(k,\ell)$-partitions partially $t$-intersect.
\begin{lem}\label{lemest1} Let $t\ge 2$.
  For a given set $T$ of size $t$ and a $(k,\ell)$-partition $Y = (Y_1,\ldots, Y_\ell) \notin \cc^T_{k,\ell}$, the number of other $(k,\ell)$-partitions from $\cc^T_{k,\ell}$ that do not partially $t$-intersect it is at least $\ell^{-2k^2}u_{k,\ell}$.
\end{lem}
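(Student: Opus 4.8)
The plan is to count $(k,\ell)$-partitions $Z\in\cc^T_{k,\ell}$ that are ``as disjoint as possible'' from $Y$ outside of $T$, and argue that with a reasonable probability a uniformly random such $Z$ fails to partially $t$-intersect $Y$. First I would fix the part $Z_1\supseteq T$ of size $k$: since $Y\notin\cc^T_{k,\ell}$, the set $T$ is split among at least two parts of $Y$, so no part of $Y$ contains $T$. To build $Z_1$ we must adjoin $k-t$ more elements to $T$ from $[k\ell]\setminus T$; the number of such choices is $\binom{k\ell-t}{k-t}$, and a positive proportion of them has the property that $Z_1$ does not contain any part $Y_j$ of $Y$ (indeed $Z_1$ already cannot contain $T$ in a single $Y_j$, but we also need to avoid $|Z_1\cap Y_j|\ge t$ for all $j$; this is a union bound over $\ell$ parts, each event having probability a small power of $1/\ell$ by a simple hypergeometric estimate, so the ``bad'' proportion is $o(1)$). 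After fixing such a $Z_1$, the remaining ground set $[k\ell]\setminus Z_1$ of size $k(\ell-1)$ must be partitioned into $\ell-1$ blocks of size $k$, and we need this partition $(Z_2,\ldots,Z_\ell)$ to avoid $|Z_i\cap Y_j|\ge t$ for all $i\ge 2$ and all $j$ simultaneously.

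The key step is the last one: showing that a uniformly random $(k,\ell-1)$-partition $(Z_2,\ldots,Z_\ell)$ of a $k(\ell-1)$-element set avoids all the ``near-coincidences'' $|Z_i\cap Y_j|\ge t$ with probability bounded below by $\ell^{-O(k^2)}$. For a fixed pair $(i,j)$, the probability that $|Z_i\cap Y_j|\ge t$ is, by a hypergeometric computation, at most roughly $\binom{k}{t}^2 (k/(k\ell))^{t} \le \ell^{-t}\cdot k^{O(k)}\le \ell^{-2}k^{O(k)}$ (using $t\ge 2$); there are at most $\ell^2$ such pairs, so the union bound gives that the probability of \emph{some} bad pair is at most $k^{O(k)}$, which is not yet $<1$. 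I would therefore not use a crude union bound but instead argue via the spreadness of $\uu_{k,\ell-1}$ itself: by the bound \eqref{eqpart3} (applied with $\ell-1$ in place of $\ell$), the number of $(k,\ell-1)$-partitions that extend a fixed subpartition $X$ with $m(X)=m$ is at most $(9/(\ell-1))^{m/3}u_{k,\ell-1}$. Each bad event ``$|Z_i\cap Y_j|\ge t$'' forces $Z$ to extend a subpartition with $m(X)\ge t-1\ge 1$; summing the bound $(9/(\ell-1))^{(t-1)/3}\le (9/(\ell-1))^{1/3}$ over the at most $k\ell\cdot\ell$ choices of a $t$-subset of a part $Y_j$ paired with a block index, the total probability of a bad event is at most $k\ell^2\cdot(9/(\ell-1))^{1/3}$, still too weak. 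So instead I would reverse the order of quantifiers: directly count partitions $Z$ that are ``$Y$-derangements on $[k\ell]\setminus Z_1$'' in the sense that no block of $Z$ meets any block of $Y$ in $\ge t$ elements, using an argument in the spirit of Lemma~\ref{lemb2} / Lemma~\ref{lemest1}'s sibling for general partitions — namely, fix the blocks of $Z$ one at a time greedily and show at each step a constant (or $1-o(1)$) fraction of completions remain valid, losing at most a factor $\ell^{-O(k)}$ per block and hence $\ell^{-O(k^2)}$ overall since there are $\ell$ blocks.

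Concretely, the cleanest route is: build $Z_2,\dots,Z_\ell$ sequentially, and at step $s$ (choosing $Z_{s+1}$ as a random $k$-subset of the remaining $k(\ell-s)$ elements) bound the probability that $Z_{s+1}$ meets some $Y_j$ in $\ge t$ elements by at most $\ell\binom{k}{t}^2\binom{k(\ell-s)-t}{k-t}/\binom{k(\ell-s)}{k}\le \ell\cdot k^{O(k)}(\ell-s)^{-t}$; as long as $\ell-s\ge \sqrt\ell$ this is $\le \ell^{-1/4}$, say, contributing a factor $\ge 1-\ell^{-1/4}$, and there are only $O(\sqrt\ell)$ ``dangerous'' final steps with $\ell-s<\sqrt\ell$, each of which we handle by noting the probability of a bad event there is still at most $k^{O(k)}(\ell-s)^{-1}$, and we simply absorb all $O(\sqrt\ell)$ of these into an overall multiplicative loss of $\ell^{-O(k)}$ (a direct counting bound: the number of valid completions at such a step is at least a $\ell^{-O(k)}$ fraction, since the number of $k$-subsets of an $m$-set avoiding $\ge t$ intersection with each of $\ell$ fixed sets is at least $\binom{m}{k}-\ell\binom{k}{t}\binom{m-t}{k-t}\ge\frac12\binom{m}{k}$ once $m\ge k^{O(1)}$, and for the last few tiny steps one uses that $Y$ restricted to the remaining ground set has few parts so the derangement count is still positive — in fact one can make the last $2k$ or so blocks coincide with blocks of $Y$ only on a controlled set, which only costs $\ell^{O(1)}$). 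Multiplying the per-step survival probabilities gives the claimed $\ell^{-2k^2}u_{k,\ell}$ lower bound (after also multiplying by the positive proportion $\binom{k\ell-t}{k-t}\cdot\Theta(1)$ of valid choices of $Z_1$ from the first paragraph, and using $u_{k,\ell}=\binom{k\ell-t}{k-t}\cdot\frac{u_{k,\ell-1}}{\text{(symmetry factor)}}$-type identities to convert back to $u_{k,\ell}$). The main obstacle, as the above makes clear, is the range of ``small remaining ground set'' where the union bound fails: the clean fix is to stop the sequential/probabilistic argument while $\ell-s$ is still $\ge$ some $\mathrm{poly}(k)$ threshold, and then invoke a crude but positive lower bound on the number of ways to complete a partition of a $\mathrm{poly}(k)$-element set avoiding all $\ge t$-intersections with the $O(k)$ remaining parts of $Y$ — this contributes only a $k$-dependent (hence $\ell^{-o(1)}$) factor and is where one spends the slack between the $\ell^{-2k^2}$ in the statement and the $\ell^{-O(k^2)}$ one naturally gets.
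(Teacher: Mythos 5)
Your plan is the same as the paper's: expose the blocks of a random $Z\in\cc^T_{k,\ell}$ one at a time (the paper phrases this as applying a uniformly random permutation $\sigma$ of $[k\ell]\setminus T$ to a fixed reference partition $X\in\cc^T_{k,\ell}$), union-bound the per-step probability of a ``$\ge t$'' overlap with a part of $Y$, and handle the last few blocks by a crude positive lower bound. The two places you flag as problematic are exactly where your estimates fail, and the paper fixes both. First, the factor of $\ell$ in your per-step bound $\ell\binom{k}{t}^2\binom{k(\ell-s)-t}{k-t}\big/\binom{k(\ell-s)}{k}$ does not shrink as blocks are placed; summing over $s$ then contributes $\Theta(\ell)$ and the product of survival probabilities collapses. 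The correct accounting sums over the actual remaining part sizes of $Y$: since $t\ge 2$, the event $|Z_i\cap Y_j|\ge t$ forces $Z_i$ and $Y$ to share a pair (an edge, in the $N={k\ell\choose 2}$ set interpretation), and the expected number of shared edges is $\binom{k}{2}\sum_j\binom{|Y_j\cap R_s|}{2}\big/\binom{|R_s|}{2}\le\frac{k^2}{2(\ell-s)}$ where $R_s$ is the remaining ground set of size $k(\ell-s)$ and each $|Y_j\cap R_s|\le k$ — a bound that decays with $s$, so the logarithmic sum is $\frac{k^2}{2}\log\ell$ and the product is $\ge\ell^{-k^2/2}$. (Your hypergeometric route also works once the union bound is taken over $\sum_j\binom{|Y_j\cap R_s|}{t}\le(\ell-s)\binom{k}{t}$ rather than over a flat $\ell$.) Second, a cutoff of $\sqrt\ell$ is far too generous for the end-game: the crude lower bound on completions of a $k\sqrt\ell$-element remainder costs a factor $\ell^{-\Omega(\sqrt\ell)}$, not $\ell^{-O(k)}$. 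The paper stops the exposure at $k^2$ remaining blocks (ground set of size $k^3$) and bounds the probability of a good completion below by $k^{-3k^3}$, which is $\ell^{-o(1)}$ once $\ell$ is large in terms of $k$; this is what yields the clean $\ell^{-k^2}$ conditional probability and hence the stated $\ell^{-2k^2}u_{k,\ell}$.
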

\begin{proof}
  Fix some partition $X = (X_1,\ldots, X_\ell) \in \cc^T_{k,\ell}$ and consider a random permutation $\sigma: [k\ell]\setminus T\to [k\ell]\setminus T$. We will prove that  $$\Pr[\sigma(X) \text{ partially $t$-intersects } Y]\le 1-\ell^{-k^2}.$$ Given that $|\cc^T_{k,\ell}|\ge \ell^{-k^2}u_{k,\ell}$ and that a uniform random permutation $\sigma$ generates a uniformly random element of $\cc^T_{k,\ell}$, the statement of the lemma follows from the displayed formula.

  We shall expose $\sigma$ block by block, where the $i$-th block describes where  $X_i$ is mapped. Let us denote by $A_i$ the  event that $|\sigma(X_i)\cap Y_j|\ge t$ for some $j$. We suppose that $T\subset X_1$. Let us first deal with the  ``exceptional'' first event $A_1$. There are two possible ways for it to occur. First, it is possible that for some $j$ we have $|Y_j\cap T|=t-1$ and $\sigma(X_1\setminus T)\cap Y_j\ne \emptyset$. Note, that there are at most $2$ such $j$ (at most $1$ for $t\ge 3$). To bound this probability, we simply look at the probability that these sets intersect. The second possibility is covered by the event that $|\sigma(X_1\setminus T)\cap Y_j|\ge 2$ for some $j$. This is the event  that the sets of edges of $X_1\setminus T$ and of the partition $Y$ intersect. Thus, we can provide the following simple bound:
  \begin{equation}\label{eq20}\Pr[\sigma(X_1)\cap Y_j\ge t \text{ for some }j]\le \frac{2k|X_1\setminus T|}{k\ell-t}+ \frac{{k\choose 2}\ell{|X_1\setminus T|\choose 2}}{{k\ell-t\choose 2}}\le \frac 12.\end{equation}
  Next, we are going to bound the probability that $A_i$ happens, given that none of the $A_1,\ldots, A_{i-1}$ happened. Each of these events is included in the event that  $|\sigma(X_i)\cap Y_j|\ge 2$ for some $j$. This is the event that  the sets of edges of $X_1\setminus T$ and of the partition $Y$ intersect. At this point, we are working with the partition that $Y$ induces on $[k\ell]\setminus \sigma(X_1\cup\ldots\cup X_{i-1})$. The latter set has size $k(\ell-i+1)$. Restricted to it, $Y$ is a partition into parts of size at most $k$ each. Thus, the number of edges in $Y$ is at most ${k\choose 2}(\ell-i+1)$, and we have the bound
  \begin{equation}\label{eq21}\Pr[A_i |\bar A_1,\ldots, \bar A_{i-1}]\le \frac{{k\choose 2}\sum_{j=1}^\ell{|Y_j\setminus (\sigma(X_1\cup\ldots\cup X_{i-1}))|\choose 2}}{{k(\ell-i+1)\choose 2}}\le \frac{{k\choose 2}^2 (\ell-i+1)}{{k(\ell-i+1)\choose 2}}\le \frac{k^2}{2(\ell-i)}.\end{equation}
  We will use this bound up to $i = \ell-k^2$. The remaining parts, that is, $R:=\sigma(X_{\ell-k^2+1}\cup\ldots\cup X_{\ell})$ form a set of size $k^3$. The partition $Y$ induced on $R$ again consists of sets of size at most $k$, and clearly there is at least $1$ choice of $\sigma$ so that each $X_i$ does not intersect each part of $Y$ induced on $R$ in more than $1$ element. At the same time, the number of different partitions of $R$ into $k^2$ parts of size $k$ is at most $k^{3k^3}$, and thus
  \begin{equation}\label{eq22}\Pr[\bar A_{\ell-k^2+1}\cap\ldots\cap \bar A_{\ell}\ | \ \bar A_1,\ldots, \bar A_{\ell-k^2}]\ge k^{-3k^3}.\end{equation}
  Combining the bounds \eqref{eq20}, \eqref{eq21}, \eqref{eq22}, we get
  $$\Pr[\cap_{i=1}^\ell\bar A_i]\ge \frac 12k^{-3k^3}\prod_{i=1}^{\ell-k^2} \Big(1+\frac{k^2}{2(\ell-i)-k^2}\Big)^{-1}\ge \frac 12k^{-3k^3}e^{-\sum_{i=1}^{\ell-k^2}\frac{k^2}{2(\ell-i)-k^2}}\ge $$
  $$\frac 12k^{-3k^3}e^{-\frac {k^2}2\log_e \ell} = \frac 12 k^{-3k^3}\ell^{-\frac {k^2}2}\ge \ell^{-k^2}. $$
  This completes the proof of the lemma.
\end{proof}

Returning to the extremal family $\ff$, assume that $\ff'$ is non-empty and thus contains some partition $Y$. Lemma~\ref{lemest1} implies that $|\cc^T_{k,\ell}\setminus \ff|\ge \ell^{-2k^2}u_{k,\ell}$. But \eqref{eq24} implies that $|\ff'|\le \ell^{-k^7}u_{k,\ell}$. We conclude that $|\ff'|\ll |\cc^T_{k,\ell}\setminus \ff|$, and thus $\ff'$ cannot be extremal unless $\ff'$ is empty. This concludes the proof of the theorem.

\section{Acknowledgements}
We are indebted to the referees for carefully reading the text and pointing out several problems with the presentation, as well as several mistakes. The work was partially supported by the Foundation for the Advancement of Theoretical Physics and Mathematics “BASIS” and by  the Ministry of Science and Higher Education of the Russian Federation, project No. FSMG-2024-0025.

\end{document}